\newtheorem{thm}{Theorem}[section]
\newtheorem{lem}[thm]{Lemma}
\newtheorem{prop}[thm]{Proposition}
\theoremstyle{definition}
\newtheorem{defn}[thm]{Definition}
\newtheorem{rem}[thm]{Remark}
\newtheorem*{defn*}{Definition}
\newtheorem*{rems*}{Remarks}
\newtheorem*{rem*}{Remark}
\numberwithin{equation}{section}
\DeclareMathOperator {\Symp} {Symp}
\DeclareMathOperator {\Span} {span}
\def \algrest {\left [\Symp (\mathbb R^{2n})\right ]_{N}}
\def \algrestall {\bigl [\Lambda ^2(\mathbb R^{2n})\bigr ]_N}
\def \algrestclosed {\bigl [ Z ^2(\mathbb R^{2n})\bigr ]_N}
\begin{document}

\title[Symplectic $W_8, W_9$ singularities] {Symplectic $W_8$ and $W_9$ singularities}

\author{\.{Z}aneta Tr\c{e}bska}
\address{Warsaw University of Technology\\
Faculty of Mathematics and Information Science\\
Plac Politechniki 1\\
00-661 Warsaw\\
Poland\\}

\email{ztrebska@mini.pw.edu.pl}


\subjclass{Primary 53D05. Secondary 14H20, 58K50, 58A10.}

\keywords{symplectic manifold, curves, local symplectic algebra,
algebraic restrictions, relative Darboux theorem, singularities}

\maketitle

\begin{abstract}
We study the local symplectic algebra of curves. We use the method
of algebraic restrictions to classify symplectic $W_8$ and $W_9$
singularities. We use discrete symplectic invariants  to distinguish symplectic singularities of the curves. We also give the geometric description of symplectic classes.
\end{abstract}

\section{Introduction}

In this paper we study the symplectic classification of singular curves under the following equivalence:

\begin{defn} \label{symplecto}
Let $N_1, N_2$ be germs of subsets of symplectic space $(\mathbb{R}^{2n}, \omega)$. $N_1, N_2$ are \textbf{symplectically equivalent} if there exists a symplectomorphism-germ $\Phi:(\mathbb{R}^{2n}, \omega) \rightarrow(\mathbb{R}^{2n}, \omega)$ such that $\Phi(N_1)=N_2$.
\end{defn}

We recall that  $\omega$ is a symplectic form if $\omega$ is a
smooth nondegenerate closed 2-form, and $\Phi:\mathbb{R}^{2n}
\rightarrow\mathbb{R}^{2n}$ is a symplectomorphism if $\Phi$ is
diffeomorphism and $\Phi ^* \omega=\omega$.

\medskip

Symplectic classification of curves was first studied by V. I.
Arnold. In \cite{Ar1} and \cite{Ar2} the author studied singular curves in symplectic and contact spaces and introduced the local symplectic and contact algebra. In \cite{Ar2} V. I. Arnold discovered new symplectic
invariants of singular curves. He proved that the $A_{2k}$
singularity of a planar curve (the orbit with respect to standard
$\mathcal A$-equivalence of parameterized curves) split into
exactly $2k+1$ symplectic singularities (orbits with respect to
symplectic equivalence of parameterized curves). He distinguished different symplectic singularities by different orders of tangency of the parameterized curve to the \emph{nearest} smooth Lagrangian submanifold. Arnold posed a problem of expressing these invariants in terms of the local algebra's interaction with the symplectic structure and he proposed to call this interaction the local symplectic algebra.

\medskip

In \cite{IJ1} G. Ishikawa and S. Janeczko classified symplectic singularities of curves in the $2$-dimensional symplectic space. All simple curves in this classification are quasi-homogeneous.

\medskip

We recall that a subset $N$ of $\mathbb R^m$ is \textbf{quasi-homogeneous} if there exists a coordinate system $(x_1,\cdots,x_m)$ on $\mathbb R^m$ and
positive numbers $w_1,\cdots,w_m$ (called weights) such that for any
point $(y_1,\cdots,y_m)\in \mathbb R^m$ and any $t>0$
if $(y_1,\cdots,y_m)$ belongs to $N$ then the point
$(t^{w_1}y_1,\cdots,t^{w_m}y_m)$ belongs to $N$.

\medskip

The generalization of results in \cite{IJ1} to volume-preserving classification of singular varieties and maps  in arbitrary dimensions was obtained in \cite{DR}. A symplectic form on a $2$-dimensional manifold is a special case of a volume form on a smooth manifold. 

In \cite{K} P. A. Kolgushkin classified the stably simple symplectic singularities of parameterized curves (in the $\mathbb C$-analytic category). Symplectic singularity is stably simple if it is simple and remains simple if the ambient symplectic space is symplectically embedded (i.e. as a symplectic submanifold) into a larger symplectic space. 


In \cite{Zh} was developed the local contact algebra. The main results were based on the notion of the algebraic restriction of a contact structure to a subset $N$ of a contact manifold.

In \cite{DJZ2} new symplectic invariants of singular quasi-homogeneous  subsets of a symplectic space were explained by the algebraic restrictions of the symplectic form to these subsets.

\smallskip

The algebraic restriction is an equivalence class of the following relation on the space of differential $k$-forms:

Differential $k$-forms $\omega_1$ and $\omega_2$ have the same
{\bf algebraic restriction} to a subset $N$ if
$\omega_1-\omega_2=\alpha+d\beta$, where $\alpha$ is a $k$-form
vanishing on $N$ and $\beta$ is a $(k-1)$-form vanishing on $N$.

\smallskip

In \cite{DJZ2} the generalization of Darboux-Givental theorem (\cite{ArGi})
to germs of arbitrary subsets of the symplectic space was obtained. This result reduces
the problem of symplectic classification of germs of quasi-homo\-ge\-neous subsets to
the problem of classification of algebraic restrictions of symplectic
forms to these subsets. For non-quasi-homogeneous subsets there is one more cohomological invariant except the algebraic restriction (\cite{DJZ2}, \cite{DJZ1}). The dimension of the space of algebraic restrictions of closed $2$-forms to a $1$-dimensional quasi-homogeneous isolated complete
intersection singularity $C$ is equal to the multiplicity of $C$ (\cite{DJZ2}). In \cite{D} it was proved that the space of algebraic restrictions of closed $2$-forms to a $1$-dimensional (singular) analytic variety is finite-dimensional.
In \cite{DJZ2} the method of algebraic restrictions was applied to various classification problems in a symplectic space. In particular the complete symplectic classification of classical $A-D-E$ singularities of planar curves and $S_5$ singularity were obtained. Most of different symplectic singularity classes were distinguished by new discrete symplectic invariants: the index of isotropy and the symplectic multiplicity.

\smallskip



In this paper using the method of algebraic restrictions we obtain the complete symplectic classification of the classical isolated complete intersection singularities $W_8$ and $W_9$  (Theorems \ref{w8-main} and \ref{w9-main}). We
calculate discrete symplectic invariants for this classification (Theorems \ref{lagr-w8} and \ref{lagr-w9}) and we present geometric descriptions of its symplectic orbits (Theorems  \ref{geom-cond-w8} and \ref{geom-cond-w9}).

\smallskip

Isolated complete intersection singularities were intensively studied by many authors (e. g. see \cite{L}), because of their interesting geometric, topological and algebraic properties.

\smallskip

In \cite{DT1} following ideas from \cite{Ar1} and \cite{D}  new
discrete symplectic invariants - the Lagrangian tangency orders
were introduced and used to distinguish symplectic
singularities of simple planar curves of type $A-D-E$, symplectic
 $T_7$ and $T_8$ singularities. 
 In \cite{DT2} was obtained the complete symplectic classification of
the isolated complete intersection singularities $S_{\mu}$
for $\mu>5$. 

\medskip

The paper is organized as follows.  In Section \ref{discrete} we recall  discrete symplectic invariants (the symplectic multiplicity, the index of isotropy and the Lagrangian tangency orders).
Symplectic classification of $W_8$ and $W_9$ singularity is studied in Sections \ref{sec-w8} and \ref{sec-w9} respectively.
In Section \ref{proofs} we recall the method of algebraic restrictions and use it to classify  symplectic singularities.

\section{Discrete symplectic invariants.}\label{discrete}

We can use discrete symplectic invariants to characterize
symplectic singularity classes.

 The first invariant is a symplectic
multiplicity (\cite{DJZ2}) introduced  in \cite{IJ1} as a
symplectic defect of a curve.

\medskip

Let $N$ be a germ of a subvariety of $(\mathbb R^{2n},\omega)$.

\begin{defn}
\label{def-mu}
 The {\bf symplectic multiplicity} $\mu_{symp}(N)$ of  $N$ is the codimension of
 the symplectic orbit of $N$ in the orbit of $N$ with respect to the action of the group of local diffeomorphisms.
\end{defn}

The second invariant is the index of isotropy \cite{DJZ2}.

\begin{defn}
The {\bf index of isotropy} $ind(N)$ of $N$ is the maximal
order of vanishing of the $2$-forms $\omega \vert _{TM}$ over all
smooth submanifolds $M$ containing $N$.
\end{defn}

This invariant has geometrical interpretation. An equivalent definition is as follows: the index of isotropy of $N$ is the maximal order of tangency between non-singular submanifolds containing $N$ and non-singular isotropic submanifolds of the same dimension.  The index of isotropy is equal to $0$ if $N$ is not contained in any non-singular submanifold which is tangent to some isotropic submanifold of the same dimension. If $N$ is contained in a non-singular Lagrangian submanifold then the index of isotropy is $\infty $.


The symplectic multiplicity and the index of isotropy can be described in terms of algebraic restrictions (Propositions \ref{sm} and \ref{ii} in Section \ref{proofs}).




\medskip

There is one more discrete symplectic invariant introduced in \cite{D} following ideas from \cite{Ar2} which is defined specifically for a parameterized curve. This is the maximal
tangency order of a curve $f:\mathbb R\rightarrow M$ to a smooth Lagrangian submanifold. If $H_1=...=H_n=0$ define a smooth submanifold $L$ in the symplectic space then the tangency order of
a curve $f:\mathbb R\rightarrow M$ to $L$ is the minimum of the orders of vanishing at $0$ of functions $H_1\circ f,\cdots, H_n\circ f$. We denote the tangency order of $f$ to $L$ by $t(f,L)$.

\begin{defn}
The {\bf Lagrangian tangency order} $Lt(f)$\textbf{ of a curve} $f$ is the
maximum of $t(f,L)$ over all smooth Lagrangian submanifolds $L$ of
the symplectic space.
\end{defn}

The Lagrangian tangency order of the quasi-homogeneous curve in a symplectic space can also be  expressed in terms of algebraic restrictions  (Proposition \ref{lto} in Section \ref{proofs}).


\medskip

In \cite{DT1}  the above invariant was generalized for germs of
curves and multi-germs of curves which may be parameterized
analytically since the Lagrangian tangency order is the same for every
'good' analytic parameterization of a curve.

\medskip

Consider a multi-germ $(f_i)_{i\in\{1,\cdots,r\}}$ of analytically
parameterized curves $f_i$.  We have $r$-tuples $(t(f_1,L), \cdots,
t(f_r,L))$ for any smooth submanifold $L$ in the
symplectic space.

\begin{defn}
For any $I\subseteq \{1,\cdots, r\}$ we define \textbf{the tangency order of the multi-germ } $(f_i)_{i\in I}$ to $L$:
$$t[(f_i)_{i\in\ I},L]=\min_{i\in\ I} t(f_i,L).$$
\end{defn}

\begin{defn}
The {\bf Lagrangian tangency order} $Lt((f_i)_{i\in\ I})$ \textbf{of a multi-germ } $(f_i)_{i\in I}$ is the maximum of $t[(f_i)_{i\in\ I},L]$ over all smooth Lagrangian submanifolds $L$ of the symplectic space.
\end{defn}



\section{Symplectic $W_8$-singularities}\label{sec-w8}

 Denote by $(W_8)$  the class of varieties in  a fixed symplectic space $(\mathbb R^{2n}, \omega )$ which are diffeomorphic to
\begin{equation}
\label{defw8} W_8=\{x\in \mathbb R ^{2n\geq
4}\,:x_1^2+ x_3^3=x_2^2+x_1x_3=x_{\geq 4}=0\}.\end{equation}

\noindent This is simple  $1$-dimensional isolated complete intersection singularity $W_8$ (\cite{G}, \cite{AVG}\footnote{ There is a mistake  in description of $W_8$ singularity in \cite{AVG}.  We find there \par $W_8=\{x\in\mathbb R^{2n\geq
4}\,:x_1^2+ x_2^3=x_2^2+x_1x_3=x_{\geq 4}=0\}$ which is not an isolated complete intersection singularity.}).
Here $N$ is a quasi-homogeneous set with weights $w(x_1)=6,\; w(x_2)=5$,\; $w(x_3)=4$.

\medskip

We used the method of algebraic restrictions to obtain a complete classification of symplectic singularities of $(W_8)$ presented in the following theorem.

\begin{thm}\label{w8-main}
Any submanifold of the symplectic space $(\mathbb R^{2n},\sum_{i=1}^n dp_i \wedge dq_i)$ where $n\geq3$ (resp. $n=2$) which is diffeomorphic to $W_8$ is symplectically equivalent to one and only one of the normal forms $W_8^i, i = 0,1,\cdots ,8$ (resp. $i=0,1,2a,2b)$  listed below. The parameters $c, c_1, c_2$ of the normal forms are moduli.

\medskip
\begin{small}

\noindent $W_8^0: \ p_1^2 + p_2q_1 = 0, \ \ p_2^2 +q_1^3= 0, \
\ q_2 = c_1q_1 + c_2p_1, \ \ p_{\ge 3} = q_{\ge 3} = 0$;

\medskip

\noindent $W_8^1: \ q_1^2 + p_1q_2 = 0, \ \ p_1^2 + q_2^3= 0, \
\ p_2 = c_1p_1 + c_2q_1q_2, \ \ p_{\ge 3} = q_{\ge 3} = 0, \; c_1\ne 0$;

\medskip

\noindent $W_8^{2a}: \ p_2^2 \pm p_1q_1 = 0, \ \ p_1^2+q_1^3 = 0, \
\ q_2 = \frac{c_1}{2}q_1^2+\frac{c_2}{3}q_1^3, \ \ p_{\ge 3} = q_{\ge 3} = 0$;

\medskip

\noindent $W_8^{2b}: \ q_1^2 + p_1q_2 = 0, \ \ p_1^2 + q_2^3= 0, \
\ p_2 = c_1q_1q_2 + \frac{c_2}{2}q_1^2, \ \ p_{\ge 3} = q_{\ge 3} = 0$;

\medskip

\noindent $W_8^3: \ p_2^2+p_1p_3=0, \  p_1^2+p_3^3=0, \ q_1=q_2=0, q_3=-p_2p_3-\frac{c_1}{2}p_2^2-c_2p_1p_2, \  p_{>3}=q_{>3}=0$;

\medskip

\noindent $W_8^4: \ p_2^2 + p_1p_3 = 0,  \ p_1^2 +p_3^3 = 0, \ q_1 =  q_2=0, q_3= \mp \frac{1}{2}p_2^2-c_1p_1p_2-c_2p_2p_3^2,  \ p_{>3} = q_{>3} = 0$;

\medskip

\noindent $W_8^5: \ p_2^2 + p_1p_3 = 0,  \ p_1^2 +p_3^3 = 0, \ q_1 =  q_2=0, q_3= -p_1p_2-cp_2p_3^2,  \ p_{>3} = q_{>3} = 0$;

\medskip

\noindent $W_8^6: \ p_2^2 + p_1p_3 = 0,  \ p_1^2 +p_3^3 = 0, \ q_1 =  q_2=0, q_3= -p_2p_3^2-\frac{c}{3}p_2^3,  \ p_{>3} = q_{>3} = 0$;

\medskip

\noindent $W_8^7: \ p_2^2 + p_1p_3 = 0,  \ p_1^2 +p_3^3 = 0, \ q_1 =  q_2=0, q_3= -\frac{1}{3}p_2^3,  \ p_{>3} = q_{>3} = 0$;

\medskip

\noindent $W_8^8: \ p_2^2 + p_1p_3 = 0,  \ p_1^2 +p_3^3 = 0,  \ p_{>3} = q_{>0} = 0$.
\end{small}
\end{thm}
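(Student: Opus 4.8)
The plan is to reduce the symplectic classification of $W_8$ to the classification of algebraic restrictions of the symplectic form $\omega$ to the curve $N=W_8$, using the generalized Darboux--Givental theorem (\cite{DJZ2}) which is applicable because $W_8$ is quasi-homogeneous. By that theorem, two germs diffeomorphic to $W_8$ are symplectically equivalent if and only if the algebraic restrictions $[\omega]_N$ of their symplectic forms are equivalent under the action of local diffeomorphisms preserving $N$ (equivalently, under the symmetry group of $N$). Thus the first and central task is to compute the finite-dimensional vector space $\algrestclosed$ of algebraic restrictions of closed $2$-forms to $W_8$. Since $W_8$ is a $1$-dimensional isolated complete intersection singularity, the dimension of this space equals the multiplicity of the singularity, namely $8$; I would produce an explicit basis by working in the fixed coordinates $x_1,x_2,x_3$ and the defining equations $x_1^2+x_3^3$, $x_2^2+x_1x_3$, systematically reducing monomial $2$-forms modulo the ideal generated by the defining functions and their differentials, together with exact terms $d\beta$ with $\beta$ vanishing on $N$.

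Having fixed a basis of $\algrestclosed$, the second step is to determine the orbits of the group $\mathrm{Symm}(N)$ of diffeomorphism-germs preserving $W_8$ acting on this space. The strategy is to compute the infinitesimal action: I would identify the Lie algebra of vector fields tangent to $N$ (the quasi-homogeneous grading by the weights $w(x_1)=6$, $w(x_2)=5$, $w(x_3)=4$ organizes both the space of algebraic restrictions and the symmetry group into graded pieces, which makes the bookkeeping tractable), and then compute the Lie derivatives of the basis forms to find the tangent spaces to the orbits. Normalizing coefficients one graded layer at a time, I expect the orbits to be parameterized exactly by the normal forms $W_8^i$, with $c,c_1,c_2$ surviving as genuine moduli precisely because the corresponding directions in $\algrestclosed$ cannot be moved by any admissible symmetry. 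The case distinction between $n\ge 3$ and $n=2$ arises because in the plane there is less room to realize certain algebraic restrictions geometrically: some orbits require a Lagrangian plane in extra dimensions, so for $n=2$ only the classes $W_8^0, W_8^1, W_8^{2a}, W_8^{2b}$ occur.

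The final step is to pass from the classification of algebraic restrictions back to explicit geometric normal forms in Darboux coordinates $(p_i,q_i)$ and to prove the distinctness (the ``one and only one'' claim). For realization, each orbit representative in $\algrestclosed$ must be shown to be the algebraic restriction of the standard $\omega=\sum dp_i\wedge dq_i$ to an explicit curve diffeomorphic to $W_8$; this is where the listed equations are produced and verified. For distinctness, I would invoke the discrete symplectic invariants recalled in Section \ref{discrete} — the symplectic multiplicity $\mu_{symp}$, the index of isotropy $ind$, and the Lagrangian tangency orders — which separate the orbits that are not already separated by the graded structure; the moduli $c,c_1,c_2$ are shown to be invariant (no symmetry changes them) so that distinct parameter values give distinct symplectic classes.

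The main obstacle I anticipate is the middle step: correctly computing the action of $\mathrm{Symm}(N)$ on $\algrestclosed$ and verifying that the proposed list is both exhaustive and irredundant. In particular, pinning down which parameters are true moduli versus which can be normalized away — and handling the several sign choices ($\pm$, $\mp$) and the non-vanishing condition $c_1\ne 0$ in $W_8^1$ — requires careful control of the higher-order terms in the symmetry group and of the non-quasi-homogeneous corrections. Showing that the moduli are genuine symplectic invariants (so that the normal forms are pairwise inequivalent) is the subtle part, and I would lean on the Lagrangian tangency orders and the index of isotropy, computed via Propositions \ref{sm}, \ref{ii}, and \ref{lto}, to certify the separation.
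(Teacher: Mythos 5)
Your route coincides with the paper's own proof in every structural respect: reduction via the generalized Darboux--Givental theorem (Theorem \ref{thm A}) to the classification of algebraic restrictions; computation of the $8$-dimensional space $\bigl[Z^2(\mathbb R^{2n})\bigr]_{W_8}$ by reducing $2$-forms modulo the differentials of the defining equations (the paper's Table \ref{tabw81} and Theorem \ref{w8-baza}); normalization one quasi-degree at a time through the infinitesimal action of tangent vector fields combined with the Moser homotopy method (Proposition \ref{elimin1}, Table \ref{infini-w8}, Lemmas \ref{w8lem0}--\ref{w8lem7}); realization of each orbit by fixing symplectic representatives $\omega^i$ and pulling back by diffeomorphisms with $(\Phi^i)^*\omega^i=\omega_0$; and moduli certified by showing $[\theta_2]_{W_8}$, $[\theta_3]_{W_8}$ lie outside the tangent space to the orbit. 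Your explanation of the $n=2$ restriction is also the right one: the classes $W_8^3,\dots,W_8^8$ have algebraic restrictions vanishing at $0$, i.e.\ they force the $3$-space $W$ to be isotropic, which is impossible in $\mathbb R^4$; this is exactly the condition $(c_1,c_2,c_3)\ne(0,0,0)$ in Theorem \ref{w8-baza}.

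The one genuine gap is your plan for the ``one and only one'' claim. You propose to certify disjointness by the symplectic multiplicity, the index of isotropy, and the Lagrangian tangency orders, but these invariants provably do not suffice here: by Tables \ref{tabw8} and \ref{tabw8-lagr}, the classes $(W_8)^{2a}$ and $(W_8)^{2b}$ share codimension $2$, $\mu^{sym}=4$, $ind=0$ and $L_N=6$, and the paper explicitly remarks that no invariant on your list separates them. The paper instead distinguishes them by the coordinate-free condition on $\ker(\omega\vert_W)$ --- equal to the tangent line $\ell$ for $(W_8)^{2b}$, different from $\ell$ for $(W_8)^{2a}$ (Table \ref{tabw8-geom}, Theorem \ref{geom-cond-w8}); for the pair $(W_8)^1$ versus $(W_8)^{2a}$, which the geometric conditions also fail to separate, it compares the tangent spaces to the orbits (spanned by $5$ versus $4$ independent restrictions), which your inclusion of $\mu_{symp}$ via Proposition \ref{sm} does recover. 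Similarly, you flag the $\pm$ sign choices as delicate but offer no mechanism for proving inequivalence; the paper's argument (Lemma \ref{w8lem2a}) is that any diffeomorphism preserving $W_8$ must preserve the parameterized curve $C(t)=(t^6,t^5,-t^4,0,\dots,0)$, hence has the triangular linear part (\ref{lindyfw8}) whose action multiplies $dx_1\wedge dx_3$ at $0$ by $A^{10}>0$, so the sign of the $\theta_2$-coefficient is an invariant. Without these two supplementary arguments --- the kernel condition for $2a$/$2b$ and the constrained-linear-part computation for the signs --- your proof of pairwise inequivalence would not close.
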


\medskip

 In Section \ref{w8-lagr} we use symplectic invariants (in particular the Lagrangian tangency order) to distinguish  symplectic
singularity classes. In Section \ref{w8-geom_cond} we propose a geometric description of these singularities which confirms the classification. Some of the proofs are presented in Section \ref{proofs}.

\subsection{Distinguishing symplectic classes of $W_8$ by Lagrangian tangency order and the index of isotropy}
\label{w8-lagr}

A curve $N\in (W_8)$ can be described as a  parameterized curve $C(t)$. Its
parameterization is given in the second column of Table \ref{tabw8-lagr}. To characterize the symplectic classes  we use the following invariants:
\begin{itemize}
  \item $L_N=Lt(N)=\max\limits _L (t(C(t),L)),$ \par where $L$ is a smooth Lagrangian submanifold of the symplectic space,
   \item $ind$ - the index of isotropy of $N$.
  \end{itemize}

\begin{rem}
 The invariants can be calculated knowing algebraic restrictions for symplectic classes. We used Proposition \ref{ii} to calculate the index of isotropy. The Lagrangian tangency order we can calculate using Proposition \ref{lto} or by  applying directly the definition of the Lagrangian tangency order and finding a Lagrangian submanifold nearest to the curve $C(t)$.
\end{rem}

\begin{thm}
\label{lagr-w8} A stratified submanifold $N\in (W_8)$ of a symplectic space $(\mathbb R^{2n}, \omega_0)$ with the canonical coordinates $(p_1, q_1, \cdots, p_n, q_n)$ is symplectically equivalent to one and only one of the curves presented in the second column of Table \ref{tabw8-lagr}. The parameters $c, c_1, c_2$ are moduli. The  index of isotropy and the Lagrangian tangency order of the curve $N$ are presented respectively in the third and fourth column of Table \ref{tabw8-lagr}.
\end{thm}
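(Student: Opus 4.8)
The plan is to prove Theorem \ref{lagr-w8} by combining the classification already obtained in Theorem \ref{w8-main} with direct computation of the two invariants on each normal form. Since Theorem \ref{w8-main} furnishes the complete list of symplectic orbits $W_8^0,\dots,W_8^8$ (resp.\ $W_8^0, W_8^1, W_8^{2a}, W_8^{2b}$ for $n=2$) together with uniqueness, the existence-and-uniqueness part of the present statement is inherited verbatim; what remains is to pass from each algebraic normal form to its parameterization $C(t)$ and to evaluate $ind(N)$ and $L_N = Lt(N)$. First I would produce, for each class, the explicit parameterization displayed in the second column of Table \ref{tabw8-lagr}: this is a routine substitution, since $W_8$ is quasi-homogeneous with weights $w(x_1)=6, w(x_2)=5, w(x_3)=4$, so each branch admits a monomial parameterization $t \mapsto (c_i t^{w_i} + \text{higher order})$, and one reads off the leading terms from the defining equations $p_1^2+p_2q_1=0$, $p_2^2+q_1^3=0$ etc.

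Next I would compute the index of isotropy $ind(N)$ for each class using Proposition \ref{ii}, which expresses it via algebraic restrictions: concretely, the index of isotropy is read off from the lowest-order term appearing in the normal form of the algebraic restriction of $\omega$ to $N$. The dichotomy is clear from the list itself---the classes $W_8^0, W_8^1, W_8^{2a}, W_8^{2b}$ carry a symplectic form whose restriction to the containing submanifold does not vanish identically (finite, typically small, index), the intermediate classes $W_8^3,\dots,W_8^7$ exhibit progressively higher order of vanishing as the moduli-bearing terms move to higher weight, and the terminal class $W_8^8$, lying in a Lagrangian submanifold (all $q$-coordinates are expressible as $0$ or functions killed by the form), has $ind = \infty$. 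So the third column is filled by tracking the order of vanishing of $\omega\vert_{TM}$ over the optimal smooth $M \supseteq N$ for each representative.

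For the fourth column I would compute $L_N = Lt(N)$ either through Proposition \ref{lto}, which ties the Lagrangian tangency order of a quasi-homogeneous curve to algebraic restrictions, or---as the remark preceding the theorem suggests---by directly exhibiting a Lagrangian submanifold $L = \{H_1 = \dots = H_n = 0\}$ and maximizing $\min_j \operatorname{ord}_0(H_j \circ C)$ over all Lagrangian $L$. For a given $C(t)$ the strategy is to choose the $H_j$ so that the generating functions kill the lowest-weight monomials of $C(t)$, compute the resulting order of tangency, and then argue the bound is sharp by showing no Lagrangian submanifold can do better (the Lagrangian condition $\{H_i, H_j\} = 0$ constrains which cancellations are simultaneously achievable). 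I expect \textbf{this last sharpness argument to be the main obstacle}: verifying that a candidate tangency order is actually the maximum requires ruling out all competing Lagrangian submanifolds, and since the curve is a multi-germ (two branches meeting at the origin) one must handle the tangency order of each branch and of the whole multi-germ consistently, using the multi-germ tangency definitions $t[(f_i)_{i\in I}, L]$ recalled in Section \ref{discrete}. The quasi-homogeneity is the key simplifying tool here, since it reduces the infinite-dimensional search over Lagrangian submanifolds to a finite weighted-degree computation, allowing the maximum to be attained on a polynomial (indeed quasi-homogeneous) Lagrangian and making the bound verifiable by a finite check.
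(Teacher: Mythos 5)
Your proposal follows essentially the same route as the paper: the existence-and-uniqueness part is inherited from Theorem \ref{w8-main} (itself resting on Theorem \ref{klasw8} and the method of algebraic restrictions), the parameterizations are obtained by routine substitution into the normal forms, the index of isotropy is computed via Proposition \ref{ii}, and $L_N$ via Proposition \ref{lto} or the direct definition, with quasi-homogeneity reducing the search over Lagrangian submanifolds to a finite weighted-degree check --- exactly what the remark preceding the theorem describes. One factual correction: $W_8$ is a \emph{single} real branch (e.g.\ $t\mapsto(t^6,t^5,-t^4)$ in the $x$-coordinates; the sign choice $x_2=\pm t^5$ is absorbed by $t\mapsto -t$, and $x_1=-t^6$ forces $x_2^2<0$), so your anticipated difficulty with the multi-germ tangency definitions $t[(f_i)_{i\in I},L]$ does not arise here --- that structure belongs to $W_9$, whose curves split into a nonsingular branch $C_1$ and a singular branch $C_2$; for $W_8$ the invariant is simply $\max_L t(C(t),L)$ for one parameterized curve. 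Also note that Proposition \ref{lto} applies only when the algebraic restriction is represented by a closed $2$-form vanishing at $0$ (classes $(W_8)^3$ through $(W_8)^8$); for $(W_8)^0$, $(W_8)^1$, $(W_8)^{2a}$, $(W_8)^{2b}$ one must use the direct definition, as you correctly allow for.
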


\renewcommand*{\arraystretch}{1.3}
\begin{center}
\begin{table}[h]

    \begin{small}
    \noindent
    \begin{tabular}{|p{2.5cm}|p{6.cm} |c|c|}
                      \hline
    Class &  Parameterization of $N$     & $ind$ & $L_N$     \\ \hline

   $(W_8)^0$ \; \; $2n\ge 4$ & $(t^5,-t^4,t^6,-c_1t^4+c_2t^5,0,\cdots )$  & $0$ & $5$  \\ \hline

     $(W_8)^1$ \; \; $2n\ge 4$ & $(t^6,t^5,c_1 t^6-c_2t^9,-t^4,0,\cdots )$  & $0$ & $6$ \\ \hline

    $(W_8)^{2a}$ \; $2n\ge 4$ & $(\pm t^6,-t^4,t^5,\frac{c_1}{2}t^8-\frac{c_2}{3}t^{12},0,\cdots )$  & $0$ & $6$ \\ \hline

    $(W_8)^{2b}$ \; $2n\ge 4$ & $(t^6,t^5,-c_1t^9+\frac{c_2}{2}t^{10},-t^4,,0,\cdots )$ & $0$ & $6$ \\ \hline

    $(W_8)^3$ \; \; $2n\ge 6$ & $(t^6,0,t^5,0, -t^4,t^9-\frac{c_1}{2}t^{10}-c_2t^{11},0,\cdots )$  & $1$ & $9$ \\ \hline

     $(W_8)^4$ \; \; $2n\ge 6$ & $(t^6,0,t^5,0, -t^4,\mp t^{10}-c_1t^{11}-c_2t^{13},0,\cdots )$  & $1$ & $10$ \\ \hline

    $(W_8)^5$ \; \; $2n\ge 6$ & $(t^6,0,t^5,0, -t^4,-t^{11}-ct^{13},0,\cdots )$ & $1$ & $11$ \\ \hline

     $(W_8)^6$ \; \; $2n\ge 6$ & $(t^6,0,t^5,0, -t^4,-t^{13}-\frac{c}{3}t^{15},0,\cdots )$  & $2$ & $13$ \\ \hline

     $(W_8)^7$ \; \; $2n\ge 6$ & $(t^6,0,t^5,0, -t^4,-t^{15},0,\cdots )$  & $2$ & $15$ \\ \hline

   $(W_8)^8$ \; \; $2n\ge 6$ & $(t^6,0,t^5,0, -t^4,0,0,\cdots )$   & $\infty$  & $\infty$  \\ \hline

\end{tabular}

\smallskip

\caption{\small The symplectic invariants for symplectic classes of $W_8$ singularity.}\label{tabw8-lagr}

\end{small}
\end{table}
\end{center}

\medskip

\begin{rem}The comparison of invariants presented in Table \ref{tabw8-lagr} shows  that the Lagrangian tangency order distinguishes more symplectic classes than the index of isotropy. Symplectic classes $(W_8)^{2a}$ and $(W_8)^{2b}$ can not be distinguished by any of the invariants but we can distinguish them by geometric conditions.
\end{rem}

\medskip


\subsection{Geometric conditions for the classes $(W_8)^i$}
\label{w8-geom_cond}


We can characterize the symplectic classes $(W_8)^i$ by geometric conditions without using any local coordinate system.
\medskip

Let $N\in (W_8)$. Denote by $W$ the tangent space at $0$ to some (and then any) non-singular $3$-manifold containing $N$. We can define the following subspaces of this space:
  $\ell$ -- the tangent line at $0$ to the curve $N$,
  $V$ -- the $2$-space tangent at $0$ to the curve $N$.

\smallskip

The classes $(W_8)^i$ satisfy special conditions in terms of the restriction $\omega\vert_ W $, where $\omega$ is the symplectic form.
For $N=W_8=$(\ref{defw8}) it is easy to calculate
\begin{equation} \label{linesw8}
W\!=\!\Span (\partial /\partial x_1,\partial /\partial x_2,\partial /\partial x_3),  \
\ell\!=\!\Span (\partial /\partial x_3), \
V\!=\!\Span (\partial /\partial x_2,\partial /\partial x_3).\end{equation}


\begin{thm}
\label{geom-cond-w8}If a stratified submanifold $N\in (W_8)$ of a
symplectic space $(\mathbb R^{2n}, \omega)$ belongs to the class
$(W_8)^i$ then the couple $(N, \omega)$ satisfies the corresponding
conditions in the last column of Table \ref{tabw8-geom}.

\end{thm}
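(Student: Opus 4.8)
The plan is to establish a correspondence between each symplectic normal form $W_8^i$ and a coordinate-free geometric condition on the triple $(W, V, \ell)$ via the restriction $\omega|_W$, and then to verify that the conditions are invariant under symplectomorphisms and that they are exhausted by the listed classes. First I would fix the model $N = W_8$ given by (\ref{defw8}) and use (\ref{linesw8}) to record $W$, $V$, and $\ell$ as explicit coordinate subspaces; this lets me translate any candidate geometric condition (such as $\omega|_W = 0$, or $\ell \subseteq \ker(\omega|_W)$, or $V$ isotropic for $\omega|_W$) into a linear condition on the $2$-jet of the symplectic form in these coordinates. The key observation is that the normal forms $W_8^i$ have already been produced as representatives of algebraic restrictions of $\omega$ to $N$; since the algebraic restriction determines $\omega|_W$ up to the relevant equivalence, I can read off from each $W_8^i$ exactly how $\omega|_W$ behaves on the distinguished subspaces.

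Next I would carry out the computation class by class. For each normal form in Theorem \ref{w8-main} (equivalently each parameterization in Table \ref{tabw8-lagr}) I would substitute into $\omega_0 = \sum dp_i \wedge dq_i$ and evaluate the restriction $\omega|_W$ together with its values on the vectors spanning $\ell$ and $V$. The stratification by the index of isotropy already gives the coarse trichotomy: $\mathrm{ind}=\infty$ corresponds to $N$ lying in a Lagrangian submanifold, hence $\omega|_W = 0$, which singles out $(W_8)^8$; the finite positive values of $\mathrm{ind}$ force $\omega|_W$ to vanish to a prescribed order along $N$, constraining whether $\ell$ or $V$ lies in the kernel of $\omega|_W$; and $\mathrm{ind}=0$ means $\omega|_W \ne 0$ already at the origin. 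Within each block the finer distinctions — for instance separating the pairs distinguished only by Lagrangian tangency order, and the subtle pair $(W_8)^{2a}$ versus $(W_8)^{2b}$ which no discrete invariant separates — must come from the precise way $\omega|_W$ pairs $\ell$ and $V$ against the complementary directions in $W$, so I would phrase the conditions in terms of ranks and kernels of $\omega|_W$ relative to the flag $\ell \subset V \subset W$.

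The main obstacle will be verifying \emph{well-definedness and invariance}: I must check that each geometric condition depends only on the symplectic equivalence class of $(N,\omega)$ and not on the chosen nonsingular $3$-manifold or on the particular coordinates, i.e. that it is genuinely a property of the couple $(N,\omega)$. The subspaces $W$, $V$, $\ell$ are intrinsic to $N$ (the tangent data of the curve and of its $3$-dimensional envelope), and a symplectomorphism carries $\omega$ to $\omega$ and maps these flags correspondingly, so any condition written purely in terms of $\omega|_W$ and the flag is automatically invariant; the real work is confirming that the condition I write down for class $(W_8)^i$ actually holds on a representative and, crucially, \emph{fails} on representatives of all other classes, so that the conditions separate the orbits as claimed. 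Because the theorem only asserts the implication ``belongs to $(W_8)^i$ $\Rightarrow$ satisfies the listed condition,'' it suffices to evaluate $\omega|_W$ on one representative per class and match it against the entry of Table \ref{tabw8-geom}; I would organize this as a direct computation on the parameterizations, using the moduli $c, c_1, c_2$ only where they genuinely affect the geometry, and I expect the chief difficulty to be handling the higher-index classes where $\omega|_W$ vanishes at $0$ and the relevant information is encoded in higher-order terms of the restricted form.
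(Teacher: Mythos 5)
Your proposal is correct and takes essentially the same route as the paper's sketch: reduce, via diffeomorphism-invariance of the conditions and the fact that they depend only on the algebraic restriction $[\omega]_N$, to checking a single representative per class on the fixed model $W_8$ with the flag $\ell\subset V\subset W$ from (\ref{linesw8}), handling the classes with $\omega\vert_W=0$ by the index of isotropy and the Lagrangian tangency orders of Table \ref{tabw8-lagr}. One caveat: your interim goal that the conditions \emph{fail} on all other classes is not attainable as stated, since $(W_8)^1$ and $(W_8)^{2a}$ share the condition $\omega\vert_V=0$ and $\ker\omega\ne\ell$ (the paper separates these two by comparing tangent spaces to orbits in the proof of Theorem \ref{klasw8}(iii)), but because you correctly scope the task to the one-directional implication actually asserted, this does not affect your argument.
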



\renewcommand*{\arraystretch}{1.3}
\begin{center}
\begin{table}[h]
    \begin{small}
    \noindent
    \begin{tabular}{|p{1.2cm}|p{5.5cm}|p{4cm}|}
            \hline
    Class &  Normal form & Geometric conditions  \\ \hline

  $(W_8)^0$   &  $[W_8]^0: [\theta _1 + c_1\theta _2 + c_2\theta _3]_{W_8}$
    &  $  \omega|_V \neq 0$ 
    \\  \hline

 $(W_8)^1$ & $[W_8]^1: [c_1\theta _2 + \theta _3 + c_2\theta _4]_{W_8}$,\;$c_1\ne 0$ & $\omega|_V=0$ and $\ker\omega \ne \ell$    \\ \hline

 $(W_8)^{2a}$ & $[W_8]^{2a}: [\pm\theta _2 + c_1\theta _4 + c_2\theta _7]_{W_8}$ & $\omega|_V=0$ and $\ker\omega \ne \ell$ \\ \hline

 $(W_8)^{2b}$ & $[W_8]^{2b}: [\theta _3 + c_1\theta _4 + c_2\theta _5]_{W_8}$ & $\omega|_V=0$ and $\ker\omega = \ell$ \\ \hline 

 & & $\omega\vert_ W = 0$\\ \hline
$(W_8)^3$  & $[W_8]^3: [\theta _4 + c_1\theta_5+c_2\theta_6]_{W_8}$
                &  $L_N=9$  \\ \hline
$(W_8)^4$ & $[W_8]^4: [\pm\theta _5 + c_1\theta_6+c_2\theta_7]_{W_8}$
                &  $L_N=10$   \\ \hline
$(W_8)^5$ & $[W_8]^5: [\theta_6+c\theta_7]_{W_8}$
                &  $L_N=11$   \\ \hline
$(W_8)^6$ & $[W_8]^6: [\theta_7+c\theta_8]_{W_8}$
                &  $L_N=13$   \\ \hline
$(W_8)^7$ & $[W_8]^7: [\theta_8]_{W_8}$
                &  $L_N=15$   \\ \hline
$(W_8)^8$ & $[W_8]^8: [0]_{W_8}$
                &   $N$ is contained in a smooth Lagrangian submanifold   \\ \hline
    \end{tabular}

\smallskip

\caption{\small Geometric interpretation of singularity classes of $W_8$: $W$ is the tangent space to a non-singular 3-dimensional manifold in $(\mathbb R^{2n\geq4}, \omega)$ containing $N\in(W_8)$. }\label{tabw8-geom}

\end{small}
\end{table}
\end{center}

\begin{proof}[Sketch of the proof of Theorem \ref{geom-cond-w8}]
 We have to show that the conditions in the row of $(W_8)^i$ are satisfied for any $N\in (W_8)^i$.
\smallskip

\noindent  Each of the conditions in the last column of Table \ref{tabw8-geom}
is invariant with respect to the action of the group of diffeomorphisms in the space of pairs $(N,\omega)$.
 Because each of these conditions depends only on the algebraic restriction $[\omega ]_N$ we can take the simplest $2$-forms $\omega ^i$ representing
the normal forms $[W_8]^i$ for algebraic restrictions:
 $\omega ^0, \ \omega ^1, \ \omega ^{2,a},\ \omega ^{2,b}, \ \omega ^3, \ \omega ^4, \ \omega ^5, \ \omega ^6, \ \omega ^7, \ \omega ^8$ and we can check that the pair $(W_8,\omega=\omega ^i)$ satisfies the condition
in the last column of Table \ref{tabw8-geom}.

We note that in the case $N = W_8 = (\ref{defw8})$ one has the description (\ref{linesw8}) 
of the subspaces $W, \ell$ and $V$. By simple calculation and observation of the Lagrangian tangency orders we obtain that the conditions corresponding to the classes $(W_8)^i$ are satisfied.
\end{proof}

\smallskip


\section{Symplectic $W_9$-singularities}\label{sec-w9}

 Denote by $(W_9)$  the class of varieties in  a fixed symplectic space $(\mathbb R^{2n}, \omega )$ which are diffeomorphic to
\begin{equation}
\label{defw9} W_9=\{x\in \mathbb R ^{2n\geq
4}\,:x_1^2+ x_2x_3^2=x_2^2+x_1x_3=x_{\geq 4}=0\}.\end{equation}

\noindent This is simple  $1$-dimensional isolated complete intersection singularity $W_9$ (\cite{G}, \cite{AVG}).
Here $N$ is quasi-homogeneous with weights $w(x_1)\!=\!5,\, w(x_2)\!=\!4,\, w(x_3)\!=\!3$.

\medskip

We present a complete classification of symplectic singularities of $(W_9)$ which was obtained using the method of algebraic restrictions.

\begin{thm}\label{w9-main}
Any submanifold of the symplectic space $(\mathbb R^{2n},\sum_{i=1}^n dp_i \wedge dq_i)$ where $n\geq3$ (resp. $n=2$) which is diffeomorphic to $W_9$ is symplectically equivalent to one and only one of the normal forms $W_9^i, i = 0,1,\cdots ,9$ (resp. $i=0,1,2$) listed below. The parameters $c, c_1, c_2$ of the normal forms are moduli.

\medskip
\begin{small}

\noindent $W_9^0: \ p_1^2 + p_2q_2^2 = 0, \ \ p_2^2 +p_1q_2= 0, \
\ q_1 = c_1q_2 + c_2p_2, \ \ p_{\ge 3} = q_{\ge 3} = 0$;

\medskip

\noindent $W_9^1: \ p_1^2 + p_2q_1^2 = 0, \ \ p_2^2 \pm p_1q_1= 0, \
\ q_2 = -c_1p_1 + \frac{c_2}{2}q_1^2, \ \ p_{\ge 3} = q_{\ge 3} = 0$;

\medskip

\noindent $W_9^2: \ p_1^2 + q_1p_2^2 = 0, \ \ q_1^2+p_1p_2 = 0, \
\ q_2 = c_1q_1p_2-c_2p_1p_2, \ \ p_{\ge 3} = q_{\ge 3} = 0$;

\medskip

\noindent $W_9^3: \ p_1^2 + p_2p_3^2 = 0,  \ p_2^2 +p_1p_3 = 0,  q_3=\mp p_2p_3-c_1p_1p_3-c_2p_1p_2,  \ q_1 =  q_2=p_{>3} = q_{>3} = 0$;

\medskip

\noindent $W_9^4: \ p_1^2 + p_2p_3^2 = 0,  \ p_2^2 +p_1p_3 = 0, \  q_3= -p_1p_3-c_1p_1p_2-c_2p_2p_3^2, q_1=q_2=p_{>3}=q_{>3} = 0$;

\medskip

\noindent $W_9^5: \ p_1^2 + p_2p_3^2 = 0,  \ p_2^2 +p_1p_3 = 0, \  q_3=\mp p_1p_2-c_1p_2p_3^2-c_2p_1p_3^2, q_1=q_2=p_{>3}=q_{>3}=0$;

\medskip

\noindent $W_9^6: \ p_1^2 + p_2p_3^2 = 0,  \ p_2^2 +p_1p_3 = 0, \ q_1 =  q_2=0, q_3= -p_2p_3^2-cp_1p_3^2,  \ p_{>3} = q_{>3} = 0$;

\medskip

\noindent $W_9^7: \ p_1^2 + p_2p_3^2 = 0,  \ p_2^2 +p_1p_3 = 0, \ q_1 =  q_2=0, q_3=\mp p_1p_3^2-cp_2p_3^3,  \ p_{>3} = q_{>3} = 0$;

\medskip

\noindent $W_9^8: \ p_1^2 + p_2p_3^2 = 0,  \ p_2^2 +p_1p_3 = 0, \ q_1 =  q_2=0, q_3=\mp p_2p_3^3,  \ p_{>3} = q_{>3} = 0$;

\medskip

\noindent $W_9^9: \ p_1^2 + p_2p_3^2 = 0,  \ p_2^2 +p_1p_3 = 0,  \ p_{>3} = q_{>0} = 0$.
\end{small}
\end{thm}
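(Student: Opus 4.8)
The plan is to follow the method of algebraic restrictions, exactly as it is applied in the companion case $W_8$ (Theorem~\ref{w8-main}), exploiting that $W_9$ is quasi-homogeneous with the weights $w(x_1)=5, w(x_2)=4, w(x_3)=3$. By the generalization of the Darboux--Givental theorem proved in \cite{DJZ2}, for a quasi-homogeneous germ $N$ the symplectic classification of the pairs $(N,\omega)$ coincides with the classification of the algebraic restrictions $[\omega]_N$ of the symplectic form, up to the natural action of the group of germs of diffeomorphisms preserving $N$. Thus the problem splits into three tasks: (i) describe the space of algebraic restrictions of closed $2$-forms to $W_9$; (ii) classify these algebraic restrictions into orbits of the symmetry group of $W_9$, isolating those realizable by a nondegenerate form; and (iii) realize each orbit by an explicit symplectic normal form in canonical coordinates.

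For step (i) I would first compute the space of algebraic restrictions of closed $2$-forms to $W_9$. By the results quoted in the introduction (\cite{DJZ2}, together with \cite{D} for finite-dimensionality) this space is finite-dimensional, of dimension equal to the multiplicity of the curve. Concretely I would start from the module of germs of $2$-forms, reduce modulo $2$-forms vanishing on $W_9$ and modulo differentials of $1$-forms vanishing on $W_9$, and use quasi-homogeneity to work degree by degree in the induced grading. This yields an explicit finite basis $\theta_1,\dots,\theta_s$ of the space of algebraic restrictions of closed $2$-forms, represented by restrictions of monomial $2$-forms in the coordinates $x_1,x_2,x_3$; these are the generators implicit in the normal forms $W_9^i$.

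For step (ii), the core of the argument, I would determine the orbits of the group of diffeomorphism germs preserving $W_9$ acting on the space $\{[\omega]_{W_9}=\sum_i c_i\theta_i\}$. The Lie algebra of this group consists of the vector fields tangent to $W_9$; using the quasi-homogeneous grading, their Lie derivatives act on the $\theta_i$ respecting the weight filtration, so one can normalize the coefficients $c_i$ successively from the lowest weight upward, at each stage killing the coefficients in the image of the infinitesimal action and retaining the surviving ones (including the moduli $c,c_1,c_2$, which parametrize $1$-parameter families that cannot be reduced further). In parallel I would impose the nondegeneracy condition: an algebraic restriction is realizable by a symplectic form on $\mathbb R^{2n}$ only when it extends to a closed nondegenerate $2$-form, and in the minimal dimension $2n=4$ this rank constraint eliminates all but the classes $W_9^0, W_9^1, W_9^2$, which accounts for the two separate lists in the statement.

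Finally, for step (iii) I would pass from each normal form of the algebraic restriction back to a genuine symplectic germ by the relative Moser / Darboux--Givental homotopy method of \cite{DJZ2}, choosing canonical coordinates $(p_i,q_i)$ so that $\omega=\sum dp_i\wedge dq_i$ and $N$ takes the stated equations; this produces the explicit normal forms $W_9^0,\dots,W_9^9$. To prove that the list is exhaustive and irredundant (``one and only one'') I would invoke the discrete symplectic invariants computed in Theorem~\ref{lagr-w9} --- the index of isotropy and the Lagrangian tangency order --- to separate the classes, and treat any pair not distinguished by these invariants through the geometric conditions as in Theorem~\ref{geom-cond-w9}. The main obstacle I expect is step (ii): the orbit computation in the space of algebraic restrictions is a lengthy finite-dimensional normalization whose bookkeeping of the infinitesimal action weight by weight, together with the careful handling of the residual moduli $c,c_1,c_2$ and of the signs ($\pm$) that label distinct real orbits, is the genuinely delicate part.
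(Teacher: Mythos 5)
Your proposal matches the paper's own proof in all essentials: the paper likewise computes the $9$-dimensional space of algebraic restrictions of closed $2$-forms to $W_9$ degree by degree (Table \ref{tabw91}, Theorem \ref{w9-baza}, including the rank constraint $(c_1,c_2,c_3)\neq(0,0,0)$ for $2n=4$), normalizes coefficients from lowest quasi-degree upward via the infinitesimal action of tangent vector fields and the Moser homotopy method (Table \ref{infini-w9}, Proposition \ref{elimin1}, Lemmas \ref{w9lem0}--\ref{w9lem8}, with the signs and moduli handled exactly as you anticipate), and then transfers each normal form back to canonical coordinates by realizing it with an explicit symplectic form and applying Theorem \ref{thm A}, with disjointness settled by the geometric conditions of Theorem \ref{geom-cond-w9}. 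This is the same decomposition into the same three steps, so no further comparison is needed.
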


 In Section \ref{w9-lagr} we use  the Lagrangian tangency orders to distinguish  symplectic classes. In Section \ref{w9-geom_cond} we propose a geometric description of the symplectic singularities. Some of the proofs are presented in Section \ref{proofs}.

\subsection{Distinguishing symplectic classes of $W_9$ by Lagrangian tangency orders}
\label{w9-lagr}
Lagrangian tangency orders were used to distinguish symplectic classes of $(W_9)$. A curve $N\in (W_9)$ may be described as a union of two parameterized branches:  $C_1$ and $C_2$. The curve $C_1$ is nonsingular and the curve $C_2$ is singular. Their parameterization  in the coordinate system $(p_1,q_1,p_2,q_2,\cdots,p_n,q_n)$ is presented in the second column of Tables \ref{tabw9-lagr}. To characterize the symplectic classes of this singularity we use the following two invariants:
\begin{itemize}
  \item $L_N=Lt(C_1,C_2)=\max\limits _L (\min \{t(C_1,L),t(C_2,L)\}),$
  \item $L_{2}=Lt(C_2)=\max\limits _L\, t(C_2,L),$
\end{itemize}
where $L$ is a smooth Lagrangian submanifold of the symplectic space.



\begin{thm}
\label{lagr-w9} A stratified submanifold $N\in (W_9)$ of a symplectic space $(\mathbb R^{2n}, \omega_0)$ with the canonical coordinates $(p_1, q_1, \cdots, p_n, q_n)$ is symplectically equivalent to one and only one of the curves presented in the second column of Table \ref{tabw9-lagr}. The parameters $c, c_1, c_2$ are moduli. The Lagrangian tangency orders  are presented in  the third and fourth  column of Table \ref{tabw9-lagr}.
\end{thm}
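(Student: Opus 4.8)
The plan is to follow the same strategy as the $W_8$ case (Theorem \ref{lagr-w8}), since the statement of Theorem \ref{lagr-w9} is structurally identical: we must show that each symplectic normal form $W_9^i$ of Theorem \ref{w9-main} admits the parameterization listed in the second column of Table \ref{tabw9-lagr}, and then compute the two Lagrangian tangency invariants $L_N = Lt(C_1,C_2)$ and $L_2 = Lt(C_2)$ for each class. The uniqueness part (``one and only one'') is already delivered by Theorem \ref{w9-main}, so the work here is purely to exhibit the parameterizations and evaluate the invariants. First I would, for each $i$, take the normal form $W_9^i$ from Theorem \ref{w9-main} and solve the two defining equations to produce an explicit analytic parameterization of the two branches. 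Because $W_9$ is quasi-homogeneous with weights $w(x_1)=5, w(x_2)=4, w(x_3)=3$, the natural parameter is $t$ with $p_1 \sim t^5$, $p_2 \sim t^4$, $p_3 \sim t^3$ along the singular branch $C_2$, and a linear parameter along the nonsingular branch $C_1$; substituting into the expressions for the remaining coordinates (the $q_3$ or $q_2$ equations carrying the moduli $c, c_1, c_2$) yields the entries of the table.

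Next I would compute the invariants. By the definition of Lagrangian tangency order, $L_2 = Lt(C_2)$ is the maximum over smooth Lagrangian submanifolds $L$ of the tangency order $t(C_2,L)$, and $L_N = Lt(C_1,C_2)$ is the maximum of $\min\{t(C_1,L), t(C_2,L)\}$. The most efficient route is to use the algebraic-restriction machinery: the Remark preceding Theorem \ref{lagr-w8} points to Proposition \ref{lto}, which expresses the Lagrangian tangency order of a quasi-homogeneous curve in terms of the algebraic restriction $[\omega]_N$. For each class I would read off the normal form of the algebraic restriction (the $[\theta_j]$ combinations appearing in the analogous $W_8$ table), apply Proposition \ref{lto} to obtain $L_N$, and then separately restrict attention to the singular branch $C_2$ to obtain $L_2$. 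Alternatively, and as a cross-check, one applies the definition directly by exhibiting an explicit Lagrangian submanifold $L = \{H_1 = \cdots = H_n = 0\}$ that is nearest to the curve and computing the orders of vanishing of the $H_j \circ C_k$; the quasi-homogeneous weights make these orders easy to read off from the leading exponents in $t$.

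The main obstacle is bookkeeping in the optimization over all Lagrangian submanifolds: for each class one must verify that the claimed tangency order is actually maximal, i.e. that no smooth Lagrangian $L$ achieves a higher order of contact with the branches. Establishing the upper bound is the delicate direction. I would handle it via Proposition \ref{lto}, which converts ``maximality over all Lagrangian $L$'' into a computation intrinsic to the algebraic restriction and thereby removes the need to quantify by hand over the infinite-dimensional family of Lagrangian submanifolds; the lower bound is then witnessed by the single explicit $L$ used to realize the tangency. A secondary subtlety is the two-branch structure unique to $W_9$ (absent in the single-germ $W_8$ case): since $L_N$ involves the $\min$ over the two branches while $L_2$ concerns only $C_2$, one must check for each class whether the Lagrangian maximizing the combined $\min$ coincides with the one maximizing $t(C_2,L)$ alone, which explains why $L_N$ and $L_2$ can differ across the table. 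Once the per-branch orders are computed from the leading $t$-exponents, assembling the two columns is routine.
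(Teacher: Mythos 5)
Your skeleton matches the paper's: the ``one and only one'' clause is indeed inherited from Theorem \ref{w9-main}, and the parameterizations in Table \ref{tabw9-lagr} are obtained by solving the defining equations of each normal form $W_9^i$ branch by branch. The genuine gap is in your main tool for the invariants. Proposition \ref{lto} is a statement about a \emph{single} parameterized quasi-homogeneous curve; it computes $Lt(f)$ for one germ and says nothing about the multi-germ invariant $L_N=Lt(C_1,C_2)=\max_L\min\{t(C_1,L),t(C_2,L)\}$, which is precisely what distinguishes the $W_9$ case from $W_8$: $W_8$ is a one-branch curve, which is why the remark before Theorem \ref{lagr-w8} offers Proposition \ref{lto} as an option there, whereas the remark following Table \ref{tabw9-lagr} says only that the definition is applied directly, by finding a Lagrangian submanifold nearest to the two branches. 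Since you explicitly route the ``delicate direction'' --- the upper bound over all Lagrangian submanifolds --- through Proposition \ref{lto}, that direction is left unproven for $L_N$ in your plan; the direct method appears in your proposal only as a lower-bound cross-check via one explicit $L$.

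Two further obstructions affect the algebraic-restriction route even where a single branch is concerned. First, Proposition \ref{lto} carries the hypothesis that the algebraic restriction be representable by a closed $2$-form vanishing at $0$, which fails for the classes $(W_9)^0$, $(W_9)^1$, $(W_9)^2$ (index of isotropy $0$ in Table \ref{tabw9}), so even there a direct computation is needed. Second, to extract $L_2=Lt(C_2)$ from algebraic restrictions you would need the classification of restrictions to the branch $C_2$ alone, which is not what the paper computes: the normal forms $[W_9]^i$ are restrictions to the full two-branch germ, and passing to $C_2$ loses information --- for every class with $i\ge 5$ one has $L_2=\infty$, i.e.\ $C_2$ lies in a smooth Lagrangian submanifold, so by Theorem \ref{thm B} the restriction $[\omega]_{C_2}$ vanishes identically for all of $(W_9)^5,\dots,(W_9)^9$ even though $[\omega]_{W_9}\neq 0$ for $5\le i\le 8$; moreover $C_2$ with its moduli terms need not be quasi-homogeneous in the given coordinates. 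The correct execution is the paper's: read off $t(C_1,L)$ and $t(C_2,L)$ from the leading $t$-exponents for an explicit candidate $L=\{H_1=\dots=H_n=0\}$, and establish maximality directly by bounding the possible orders of vanishing of the $H_j\circ C_k$ over all smooth Lagrangian germs, a verification your proposal does not carry out.
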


\renewcommand*{\arraystretch}{1.4}
\begin{center}
\begin{table}[h]

    \begin{footnotesize}
    \noindent
    \begin{tabular}{|p{0.8cm}|p{9.3cm} |c|c|}
                      \hline
    Class &  Parameterization of branches      & $L_N$    & $L_2$ \\ \hline

 $(W_9)^0$ & $C_1:(0,c_1t,0,t,0,0,\cdots )$, \; \; $C_2:(t^5,-c_1t^3-c_2t^4,-t^4,-t^3,0,\cdots )$     & $4$  & $4$ \\
    \hline

$(W_9)^1$ & $C_1:(0,\pm t,0,\frac{c_2}{2}t^2,0,\cdots ),$ \ \ $C_2:(t^5,\mp t^3,-t^4,-c_1 t^5+\frac{c_2}{2}t^6,0,\cdots )$   & $5$  & $5$ \\
 \hline

$(W_9)^2$ & $C_1:(0,0, t,0,0,\cdots ),$ \;  \ $C_2:(t^5,-t^4,-t^3,-c_1t^7+c_2t^8,0,\cdots )$   & $5$  & $5$ \\
 \hline 

$(W_9)^3$ & $C_1:(0,0,0,0,t,0,\cdots ),$ \ $C_2\!:(t^5,0,-t^4,0, t^3,\mp t^7+c_1t^8+c_2t^9,0,\cdots )$    & $7$  & $7$ \\
 \hline

$(W_9)^4$ & $C_1:(0,0,0,0,t,0,\cdots ),$ \ $C_2\!:(t^5,0,-t^4,0, t^3,t^8+c_1t^9+c_2t^{10},0,\cdots )$    & $8$  & $8$ \\
 \hline

$(W_9)^5$ & $C_1\!:\!(0,0,0,0,t,0,\cdots),$ \  $C_2\!:(t^5\!,0,-t^4\!,0,t^3\!,\pm t^9+c_1t^{10}-c_2t^{11},0,\cdots )$    & $9$  & $\infty$ \\
\hline

$(W_9)^6$ & $C_1:(0,0,0,0,t,0,\cdots ),$ \; $C_2:(t^5,0,-t^4,0, t^3,t^{10}-ct^{11},0,\cdots )$    & $10$  & $\infty$ \\
 \hline

$(W_9)^7$ & $C_1:(0,0,0,0,t,0,\cdots ),$ \; $C_2:(t^5,0,-t^4,0, t^3,\mp t^{11}-ct^{13},0,\cdots )$    & $11$  & $\infty$ \\
 \hline

$(W_9)^8$ & $C_1:(0,0,0,0,t,0,\cdots ),$ \; $C_2:(t^5,0,-t^4,0, t^3,\mp t^{13},0,\cdots )$   & $13$  & $\infty$ \\
  \hline

$(W_9)^9$ & $C_1:(0,0,0,0,t,0,\cdots ),$ \; $C_2:(t^5,0,-t^4,0, t^3,0,0,\cdots )$   & $\infty$  & $\infty$ \\
 \hline

\end{tabular}

\smallskip

\caption{\small  Lagrangian tangency orders for symplectic classes of $W_9$ singularity.}\label{tabw9-lagr}
\end{footnotesize}
\end{table}
\end{center}
\begin{rem}
 The invariants can be calculated knowing the paramererization of branches $C_1$ and $C_2$.  We   apply directly the definition of the Lagrangian tangency order  finding a Lagrangian submanifold nearest to the  branches.
\end{rem}

\subsection{Geometric conditions for the classes $(W_9)^i$}
\label{w9-geom_cond}

\parbox{5cm}{}

\medskip

Let $N\in (W_9)$. Denote by $W$ the tangent space at $0$ to some (and then any) non-singular $3$-manifold containing $N$. We can define the following subspaces of this space:

  $\ell$ -- the tangent line at $0$ to both branches of $N$,

  $V$ -- $2$-space tangent at $0$ to the singular branch of $N$.

\smallskip

The classes $(W_9)^i$ satisfy special conditions in terms of the restriction $\omega\vert_ W $, where $\omega $ is the symplectic form.

\begin{thm}
\label{geom-cond-w9} A stratified submanifold $N\in (W_9)$ of a symplectic space $(\mathbb R^{2n}, \omega )$ belongs to the class $(W_9)^i$ if and only if the couple $(N, \omega )$ satisfies the corresponding conditions in the last column of Table \ref{tabw9-geom}.
\end{thm}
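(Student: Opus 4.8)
The plan is to adapt the strategy of the sketch proof of Theorem~\ref{geom-cond-w8}, upgrading it to a full equivalence. The first step is the reduction to the model $N=W_9=(\ref{defw9})$. Each entry in the last column of Table~\ref{tabw9-geom} --- the conditions on $\omega|_V$, on whether $\ker\omega$ coincides with $\ell$, on the vanishing of $\omega|_W$, and the prescribed values of the Lagrangian tangency orders $L_N$ and $L_2$ --- is invariant under the action of the group of local diffeomorphisms on the space of pairs $(N,\omega)$, and each depends only on the algebraic restriction $[\omega]_N$. Indeed, the conditions involving $V$, $W$ and $\ker\omega$ see only the value $\omega_0$ of $\omega$ at the origin, and if $[\omega_1]_N=[\omega_2]_N$ then $(\omega_1-\omega_2)_0=0$: every function $g$ vanishing on $W_9$ has $dg_0=0$, since it must vanish on the smooth branch tangent to $\partial/\partial x_3$ and on the singular branch, whose components have orders $3,4,5$, which forces the linear part of $g$ to vanish (no quadratic monomial in $x_1,x_2,x_3$ has weight below $6$). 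The conditions on $L_N,L_2$ depend only on $[\omega]_N$ by Proposition~\ref{lto}, $W_9$ being quasi-homogeneous. Hence it suffices to work on the fixed model, where one computes $W=\Span(\partial/\partial x_1,\partial/\partial x_2,\partial/\partial x_3)$, $\ell=\Span(\partial/\partial x_3)$ and $V=\Span(\partial/\partial x_2,\partial/\partial x_3)$ (the analogue of (\ref{linesw8})), and to replace $\omega$ by the simplest closed $2$-form $\omega^i$ representing the normal form $[W_9]^i$.

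For the forward (``only if'') implication I would, for each $i=0,\dots,9$, evaluate $\omega^i$ on the basis of $W$ to read off $\omega^i|_V$, $\omega^i|_W$ and $\ker\omega^i$, and compare with Table~\ref{tabw9-geom}; the coarse conditions then follow by inspection of which wedge monomials occur in $\omega^i$. For the classes on which $\omega|_W=0$ the distinguishing data are the tangency orders, which I would either recompute via Proposition~\ref{lto} or simply transcribe from Table~\ref{tabw9-lagr}, where $L_N=\max_L\min\{t(C_1,L),t(C_2,L)\}$ and $L_2=\max_L t(C_2,L)$ were obtained from the branch parameterizations $C_1,C_2$.

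The reverse (``if'') implication, which is the new content relative to Theorem~\ref{geom-cond-w8}, follows by a mutual-exclusivity argument. By Theorem~\ref{w9-main} the family $\{(W_9)^i\}$ partitions $(W_9)$, and the forward implication shows each class satisfies its own row of Table~\ref{tabw9-geom}. Hence it is enough to verify that the ten rows are pairwise incompatible: if this holds, a pair $(N,\omega)$ satisfying row $i$ cannot lie in a class $(W_9)^j$ with $j\ne i$, since that would force it to satisfy row $j$ as well; so it must lie in $(W_9)^i$, and exhaustiveness is automatic from the classification.

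I expect this pairwise-incompatibility check to be the main obstacle. Most pairs are separated immediately by distinct integer values of $L_N$ (for instance $4,7,8,9,10,11,13,\infty$ for classes $0,3,4,5,6,7,8,9$) or by the dichotomy $\omega|_W=0$ versus $\omega|_W\ne 0$. The delicate pairs are those whose tangency orders coincide --- notably $(W_9)^1$ and $(W_9)^2$, both of which have $L_N=L_2=5$ --- where incompatibility must instead be extracted from the finer conditions on $\omega|_V$ and on $\ker\omega$ relative to $\ell$, exactly as the pair $(W_8)^{2a},(W_8)^{2b}$ was handled. A secondary technical point is that establishing the exact tangency-order values in the forward step requires exhibiting, for each class, a Lagrangian submanifold attaining the claimed order and arguing that none does better; this optimality argument, rather than the bookkeeping of the coarse conditions, is where the real work lies.
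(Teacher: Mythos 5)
Your proposal is correct and follows essentially the same route as the paper's own sketch: reduce to the model $W_9$ and the representative forms $\omega^i$ via invariance and dependence on $[\omega]_N$ alone, verify each row directly, and obtain the converse from the disjointness of the rows combined with the partition of $(W_9)$ given by the classification (the paper opens its proof with exactly this disjointness remark, handling $(W_9)^1$ versus $(W_9)^2$ by the $\ker\omega=\ell$ dichotomy just as you do). Your added justifications --- that $[\omega_1]_N=[\omega_2]_N$ forces equality of the forms at $0$, and the appeal to Proposition \ref{lto} and Table \ref{tabw9-lagr} for the tangency orders --- are details the paper leaves implicit, not a different method.
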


\linespread{1.1}

\renewcommand*{\arraystretch}{1.2}
\begin{center}
\begin{table}[h]
    \begin{small}
    \noindent
    \begin{tabular}{|p{1.0cm}|p{4cm}|p{6.5cm}|}
            \hline
    Class &  Normal form & Geometric conditions  \\ \hline

  $(W_9)^0$   &  $[W_9]^0: [\theta _1 + c_1\theta _2 + c_2\theta _3]_{W_9}$
    &  $  \omega|_V \neq 0$ (2-space tangent to $N$ is not isotropic)  \\  \hline
  $(W_9)^1$ & $[W_9]^1: [\pm\theta _2 + c_1\theta _3 + c_2\theta _4]_{W_9}$ & $\omega|_V=0$ and $\ker\omega \ne \ell$    \\ \hline

 $(W_9)^{2}$ & $[W_9]^2: [\theta _3 + c_1\theta_4+c_2\theta_5]_{W_9}$ & $\omega|_V=0$ and $\ker\omega = \ell$ \\ \hline 

 & & $\omega\vert_ W = 0$\\ \hline
$(W_9)^3$  & $[W_9]^3: [\pm\theta _4 +c_1\theta_5+c_2\theta_6 ]_{W_9}$
                &  $L_N=7$  \\ \hline
$(W_9)^4$ & $[W_9]^4: [\theta _5 + c_1\theta _6+c_2\theta _7]_{W_9}$
                &  $L_N=8$   \\ \hline
$(W_9)^5$ & $[W_9]^5: [\pm\theta _6 + c_1\theta _7 + c_2 \theta_8]_{W_9}$
                &  $L_N=9$   \\ \hline
$(W_9)^6$ & $[W_9]^6: [\theta _7 + c\theta _8]_{W_9}$  &  $L_N=10$   \\ \hline
$(W_9)^7$ & $[W_9]^7: [\pm\theta _8+c\theta_9]_{W_9}$  &  $L_N=11$   \\ \hline
$(W_9)^8$ & $[W_9]^8: [\pm\theta_9]_{W_9}$     &  $L_N=13$   \\ \hline
$(W_9)^9$ & $[W_9]^9: [0]_{W_9}$
                &  $N$ is contained in a smooth Lagrangian submanifold   \\ \hline
    \end{tabular}

\smallskip

\caption{\small Geometric characterization of symplectic classes of  $W_9$ singularity}\label{tabw9-geom}

\end{small}
\end{table}
\end{center}

\begin{proof}[Sketch of the proof of Theorem \ref{geom-cond-w9}]
The conditions on the pair $(\omega, N)$ in the last column of Table \ref{tabw9-geom} are disjoint. It suffices to prove that these conditions in the row of $(W_9)^{i}$, are satisfied for any $N\in (W_9)^{i}$.
\smallskip

We can take the simplest $2$-forms $\omega ^i$ representing
the normal forms $[W_9]^i$ for algebraic restrictions and we can check that the pair $(W_9,\omega=\omega ^i)$ satisfies the condition
in the last column of Table \ref{tabw9-geom}.

\noindent We note that in the case $N = W_9 = (\ref{defw9})$ one has \par\noindent
$ \ell \ = \Span (\partial /\partial x_3), \  \
V = \Span (\partial /\partial x_2,\partial /\partial x_3$, \
  $W = \Span (\partial/\partial x_1, \partial / \partial x_2, \partial / \partial x_3)$. \par\noindent  By simple calculation and observation of the Lagrangian tangency orders we obtain that the conditions corresponding to the classes $(W_9)^i$ are satisfied.
\end{proof}

\bigskip

\section{Proofs}\label{proofs}

\subsection{The method of algebraic restrictions}
\label{method}

In this section we present basic facts on the method of algebraic restrictions, which is a very powerful tool for the symplectic classification. The details of the method and proofs of all results of this section can be found in \cite{DJZ2}.

  Given a germ of a non-singular manifold $M$ denote by $\Lambda ^p(M)$ the space of all germs at $0$ of differential $p$-forms on $M$. Given a subset $N\subset M$ introduce the following subspaces of $\Lambda ^p(M)$:
$$\Lambda ^p_N(M) = \{\omega \in \Lambda ^p(M): \ \ \omega (x)=0 \ \text {for any} \ x\in N \};$$
$$\mathcal A^p_0(N, M) = \{\alpha  + d\beta : \ \ \alpha \in \Lambda _N^p(M), \ \beta \in \Lambda _N^{p-1}(M).\}$$

\smallskip

\begin{defn}
\label{main-def} Let $N$ be the germ of a subset of $M$ and let
$\omega \in \Lambda ^p(M)$. The {\bf algebraic restriction} of
$\omega $ to $N$ is the equivalence class of $\omega $ in $\Lambda
^p(M)$, where the equivalence is as follows: $\omega $ is
equivalent to $\widetilde \omega $ if $\omega - \widetilde \omega
\in \mathcal A^p_0(N, M)$.
\end{defn}

\noindent {\bf Notation}. The algebraic restriction of the germ of
a $p$-form $\omega $ on $M$ to the germ of a subset $N\subset M$
will be denoted by $[\omega ]_N$. Writing $[\omega ]_N=0$ (or
saying that $\omega $ has zero algebraic restriction to $N$) we
mean that $[\omega ]_N = [0]_N$, i.e. $\omega \in A^p_0(N, M)$.

\medskip




\begin{defn}Two algebraic restrictions
$[\omega ]_N$ and $[\widetilde \omega ]_{\widetilde N}$ are called {\bf
diffeomorphic} if there exists the germ of a diffeomorphism $\Phi:
\widetilde M\to M$ such that $\Phi(\widetilde N)=N$ and  $\Phi ^*([\omega ]_N) =[\widetilde \omega ]_{\widetilde N}$.
\end{defn}

\smallskip


The method of algebraic restrictions applied to singular
quasi-homogeneous subsets is based on the following theorem.

\begin{thm}[Theorem A in \cite{DJZ2}] \label{thm A}
Let $N$ be the germ of a quasi-homogeneous subset of $\mathbb
R^{2n}$. Let $\omega _0, \omega _1$ be germs of symplectic forms
on $\mathbb R^{2n}$ with the same algebraic restriction to $N$.
There exists a local diffeomorphism $\Phi $ such that $\Phi (x) =
x$ for any $x\in N$ and $\Phi ^*\omega _1 = \omega _0$.

Two germs of quasi-homogeneous subsets $N_1, N_2$ of a fixed
symplectic space $(\mathbb R^{2n}, \omega )$ are symplectically
equivalent if and only if the algebraic restrictions of the
symplectic form $\omega $ to $N_1$ and $N_2$ are diffeomorphic.

\end{thm}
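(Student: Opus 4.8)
The plan is to establish both assertions by the Moser homotopy method, reducing everything to a relative Poincaré lemma in which the quasi-homogeneity of $N$ is the decisive ingredient. For the first assertion I would begin by writing out the hypothesis $[\omega_0]_N=[\omega_1]_N$ as $\omega_1-\omega_0=\alpha+d\beta$ with $\alpha\in\Lambda^2_N(\mathbb R^{2n})$ and $\beta\in\Lambda^1_N(\mathbb R^{2n})$; since $\omega_0,\omega_1$ are closed, so is $\alpha=(\omega_1-\omega_0)-d\beta$. The heart of the argument is the claim that a closed $2$-form $\alpha$ vanishing at every point of the quasi-homogeneous set $N$ admits a primitive $\tilde\gamma\in\Lambda^1_N(\mathbb R^{2n})$, that is $\alpha=d\tilde\gamma$ with $\tilde\gamma$ again vanishing on $N$. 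I would prove this using the Euler vector field $E=\sum_i w_ix_i\,\partial/\partial x_i$ and its flow $h_s(x)=(e^{sw_1}x_1,\dots,e^{sw_{2n}}x_{2n})$: closedness of $\alpha$ gives $\mathcal L_E\alpha=d\iota_E\alpha$, hence $\frac{d}{ds}h_s^*\alpha=h_s^*\,d\iota_E\alpha$, and integrating in $s$ from $-\infty$ to $0$ yields $\alpha=d\tilde\gamma$ with $\tilde\gamma=\int_{-\infty}^0 h_s^*(\iota_E\alpha)\,ds$. Positivity of the weights forces $h_s^*\alpha\to 0$ as $s\to-\infty$, which both makes the integral converge and kills the boundary term; the invariance $h_s(N)=N$ together with the vanishing of $\alpha$ on $N$ makes $\iota_E\alpha$, and therefore each $h_s^*(\iota_E\alpha)$ and the integral $\tilde\gamma$, vanish at every point of $N$. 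Setting $\gamma=\tilde\gamma+\beta\in\Lambda^1_N(\mathbb R^{2n})$ I obtain $\omega_1-\omega_0=d\gamma$ with $\gamma$ vanishing on $N$.

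Next I would apply Moser's trick to $\omega_t=\omega_0+t\,d\gamma=(1-t)\omega_0+t\omega_1$, $t\in[0,1]$, solving $\iota_{X_t}\omega_t=-\gamma$ for a time-dependent vector field $X_t$. Since each $\omega_t$ is closed, Cartan's formula gives $\frac{d}{dt}(\Phi_t^*\omega_t)=\Phi_t^*(d\iota_{X_t}\omega_t+d\gamma)=0$, so the flow $\Phi_t$ of $X_t$ satisfies $\Phi_1^*\omega_1=\omega_0$; and because $\gamma$ vanishes on $N$ while $\omega_t$ is nondegenerate there, $X_t$ vanishes on $N$, so $\Phi=\Phi_1$ fixes $N$ pointwise, as required. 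The delicate point is nondegeneracy of $\omega_t$ on a neighborhood of $0$ for all $t\in[0,1]$. Here I would use that the Euler-field primitive satisfies $d\tilde\gamma(0)=\alpha(0)=0$, so the only obstruction is the value $d\beta(0)=\omega_1(0)-\omega_0(0)$; a preliminary diffeomorphism preserving $N$ and matching the two symplectic forms at $0$ (the leading-term normalization of the classical Darboux theorem) reduces this to zero, after which $\omega_t(0)=\omega_0(0)$ is nondegenerate for all $t$ and nondegeneracy persists on a small enough neighborhood by openness and compactness of $[0,1]$.

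The second assertion then follows formally. If $N_1,N_2$ are symplectically equivalent, a symplectomorphism $\Phi$ with $\Phi(N_1)=N_2$ carries $[\omega]_{N_2}$ to $[\omega]_{N_1}$, so the two algebraic restrictions are diffeomorphic. Conversely, if $[\omega]_{N_1}$ and $[\omega]_{N_2}$ are diffeomorphic there is a diffeomorphism $\Phi$ with $\Phi(N_1)=N_2$ and $[\Phi^*\omega]_{N_1}=[\omega]_{N_1}$; applying the first assertion to the pair $\omega_0=\omega$, $\omega_1=\Phi^*\omega$ on $N_1$ produces $\Psi$ fixing $N_1$ pointwise with $\Psi^*\Phi^*\omega=\omega$, so that $\Phi\circ\Psi$ is a symplectomorphism taking $N_1$ to $N_2$.

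I expect the relative Poincaré lemma to be the main obstacle, since it is the only step that genuinely uses quasi-homogeneity: it is precisely the decay $h_s^*\alpha\to0$ and the invariance $h_s(N)=N$ that make the primitive $\tilde\gamma$ both exist and vanish on $N$. Dropping quasi-homogeneity, this step breaks down and an additional cohomological invariant survives, exactly the phenomenon noted in the introduction for non-quasi-homogeneous subsets. The nondegeneracy and normalization issue in the Moser step is a secondary technical matter, but it is the part I would be most careful to state precisely, since the naive straight-line homotopy can pass through degenerate forms when $\omega_0$ and $\omega_1$ disagree at $0$.
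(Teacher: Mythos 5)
The paper itself gives no proof of this statement: it is quoted as Theorem A of \cite{DJZ2}, and the text explicitly delegates all proofs of Section \ref{method} to that reference. Your reconstruction follows the same route as the source: write $\omega_1-\omega_0=\alpha+d\beta$ with $\alpha\in\Lambda^2_N$, $\beta\in\Lambda^1_N$, note $\alpha$ is closed, apply the quasi-homogeneous relative Poincar\'e lemma via the homotopy operator $\tilde\gamma=\int_{-\infty}^0 h_s^*(\iota_E\alpha)\,ds$ along the Euler flow (this is exactly where positivity of the weights and the invariance $h_s(N)=N$ enter, and it is the lemma of \cite{DJZ1} on which \cite{DJZ2} rests), then run the Moser trick with $\iota_{X_t}\omega_t=-\gamma$. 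That skeleton is correct, including the points that $\tilde\gamma$ vanishes on $N$ because $\iota_E\alpha$ vanishes on the flow-invariant set $N$, that $X_t$ vanishes on $N$ so the flow fixes $N$ pointwise, and the purely formal derivation of the second assertion from the first.

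The genuine gap is the normalization at the origin, which you correctly identify as the delicate point but then dismiss as ``the leading-term normalization of the classical Darboux theorem.'' Two problems. First, for the first assertion the preliminary diffeomorphism must fix $N$ \emph{pointwise}, not merely ``preserve $N$'' as you write; a setwise symmetry would suffice for the equivalence statement but destroys the conclusion $\Phi(x)=x$ on $N$. Second, and more seriously, the classical Darboux normalization does not respect $N$: what you need is a germ $\Theta$ with $\Theta\vert_N={\rm id}$ and $(\Theta^*\omega_1)(0)=\omega_0(0)$, and its existence is a nontrivial lemma. From $\beta\in\Lambda^1_N$ one gets that $d\beta(0)$ annihilates pairs of vectors tangent at $0$ to curves in $N$ (a fact you should record), so the symplectic Witt extension theorem yields a linear $B$ with $B^*\omega_1(0)=\omega_0(0)$ that is the identity on the span $W$ of the tangent cone of $N$. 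But $B$ fixes only $W$, not the curved singular set $N$, and realizing $B$ as the $1$-jet of a diffeomorphism that is the identity on $N$ requires an argument: flows of vector fields vanishing on $N$ have linear parts $\exp(A)$ with $A\vert_W=0$ and hence reach only the identity component of $\{B:\ B\vert_W={\rm id}\}$ (which in general has two components, distinguished by the sign of the transversal determinant), so one must either prove that the Witt coset meets the identity component or build the correction explicitly, e.g.\ using the quasi-homogeneous grading, which is in effect what \cite{DJZ2} does. Your own observation that the straight segment genuinely degenerates (already for $N=\{0\}$, where the theorem reduces to Darboux) shows this step cannot be absorbed into ``openness and compactness of $[0,1]$'': besides the relative Poincar\'e lemma it is the one place where real work remains, and as stated your proof leaves it open.
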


\medskip

Theorem \ref{thm A} reduces the problem of symplectic
classification of germs of singular quasi-homogeneous subsets to
the problem of diffeomorphic classification of algebraic
restrictions of the germ of the symplectic form to the germs of
singular quasi-homogeneous subsets.

\smallskip

The geometric meaning of the zero algebraic restriction is explained
by the following theorem.

\begin{thm}[Theorem {\bf B} in \cite{DJZ2}] \label{thm B}  {\it The germ of a quasi-homogeneous
 set $N$  of a symplectic space
$(\mathbb R^{2n}, \omega )$ is contained in a non-singular
Lagrangian submanifold if and only if the symplectic form $\omega
$ has zero algebraic restriction to $N$.}\
\end{thm}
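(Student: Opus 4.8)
The plan is to prove the two implications separately; the forward implication is elementary, while the converse carries all the content and is where quasi-homogeneity enters.

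\textbf{The easy direction (Lagrangian $\Rightarrow$ zero algebraic restriction).} Suppose $N$ is contained in a non-singular Lagrangian submanifold $L$. By the classical Darboux theorem adapted to a Lagrangian submanifold I can choose coordinates $(p_1,q_1,\dots,p_n,q_n)$ in which $\omega=\sum_{i=1}^n dp_i\wedge dq_i$ and $L=\{p_1=\cdots=p_n=0\}$. Setting $\beta=\sum_{i=1}^n p_i\,dq_i$, every coefficient $p_i$ vanishes on $L\supseteq N$, so $\beta\in\Lambda^1_N(M)$, and $\omega=d\beta$. Hence $\omega\in\mathcal A^2_0(N,M)$, i.e.\ $[\omega]_N=0$. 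This implication uses no quasi-homogeneity.

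\textbf{The converse: producing a primitive vanishing on $N$.} Assume $[\omega]_N=0$, so $\omega=\alpha+d\beta$ with $\alpha\in\Lambda^2_N(M)$ and $\beta\in\Lambda^1_N(M)$. Since $d\alpha=d\omega-d\,d\beta=0$, the form $\alpha$ is closed. I would use quasi-homogeneity to build a primitive of $\alpha$ that still vanishes on $N$: let $E=\sum_i w_i x_i\,\partial/\partial x_i$ be the Euler field of the quasi-homogeneous structure, let $\sigma_t$ be the associated dilations (so $\sigma_t(N)\subseteq N$ for $t\le 1$), and define the homotopy operator $h\gamma=\int_{-\infty}^{0}\sigma_{e^s}^*(\iota_E\gamma)\,ds$. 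For $p$-forms with $p\ge 1$ the positivity of the weights forces $\sigma_{e^s}^*\gamma\to 0$ as $s\to-\infty$, giving the homotopy identity $\gamma=d\,h\gamma+h\,d\gamma$. Applying it to the closed form $\alpha$ yields $\mu:=h\alpha$ with $d\mu=\alpha$; moreover $\iota_E\alpha$ and each $\sigma_{e^s}^*(\iota_E\alpha)$ vanish on $N$ because $\alpha$ does, so $\mu\in\Lambda^1_N(M)$. Then $\lambda:=\beta+\mu\in\Lambda^1_N(M)$ satisfies $d\lambda=d\beta+\alpha=\omega$, and quasi-homogeneity lets me take $\lambda$ (hence the field $Z$ below) quasi-homogeneous.

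\textbf{The converse: from the primitive to a Lagrangian.} Define the Liouville field $Z$ by $\iota_Z\omega=\lambda$, which exists and is unique since $\omega$ is non-degenerate. Because $\lambda$ vanishes at every point of $N$ and $\omega$ is non-degenerate, $Z$ vanishes on $N$; and $\mathcal L_Z\omega=d\iota_Z\omega+\iota_Z d\omega=d\lambda=\omega$. On any smooth piece of the zero-set germ $\{Z=0\}$, for vector fields $U,V$ tangent to it the Cartan identity gives $\omega(U,V)=(\mathcal L_Z\omega)(U,V)=Z\cdot\omega(U,V)-\omega([Z,U],V)-\omega(U,[Z,V])$, and at a zero $x_0$ of $Z$ each of the three terms vanishes (the first because $Z(x_0)=0$; the others because $[Z,U](x_0)=(DZ)_{x_0}u$ lies in the tangent space, which equals $\ker(DZ)_{x_0}$). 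Thus $\{Z=0\}$ is isotropic; equivalently, linearizing $\mathcal L_Z\omega=\omega$ at $0$ gives $A^\top\Omega+\Omega A=\Omega$ for $A=(DZ)_0$, whence $\ker A$ is $\omega$-isotropic and of dimension $\le n$. Since $N\subseteq\{Z=0\}$, I obtain a smooth isotropic submanifold-germ through $0$ containing $N$, which I then enlarge to a non-singular Lagrangian $L\supseteq N$ by the isotropic extension theorem.

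\textbf{Main obstacle.} The delicate point is not the isotropy computation but establishing that the zero-set germ $\{Z=0\}$ is a \emph{non-singular} submanifold containing $N$; for a general field vanishing at $0$ this can fail. This is exactly where quasi-homogeneity must be exploited: the quasi-homogeneity of $Z$ (together with finite determinacy) forces $Z$ to be effectively linearizable and its zero set to be a graded smooth subspace, so that $N$ indeed lies on a smooth isotropic submanifold. As an alternative packaging of the converse, one may instead use this construction only to exhibit \emph{one} symplectic form $\omega_1$ for which $N$ lies in a non-singular Lagrangian $L_1$; then $[\omega_1]_N=0=[\omega]_N$ by the easy direction, and Theorem~\ref{thm A} provides a diffeomorphism $\Phi$ fixing $N$ pointwise with $\Phi^*\omega_1=\omega$, so that $\Phi^{-1}(L_1)$ is a non-singular Lagrangian for $\omega$ containing $\Phi^{-1}(N)=N$.
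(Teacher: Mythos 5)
The paper itself contains no proof of this statement: it is imported verbatim as Theorem~B of \cite{DJZ2} (the paper says explicitly that proofs of all results of Section~\ref{method} are in \cite{DJZ2}), so your attempt must be judged against the argument there rather than against anything in this text. Your forward implication is correct, and your second step is also correct and is exactly the quasi-homogeneous relative Poincar\'e lemma of \cite{DJZ1}/\cite{DJZ2}: the homotopy operator $h\gamma=\int_{-\infty}^0\sigma_{e^s}^*(\iota_E\gamma)\,ds$ does produce a primitive $\lambda\in\Lambda^1_N(M)$ of $\omega$, since $N$ is $\sigma_t$-invariant and $\alpha$ vanishes pointwise on $N$. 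The genuine gap is the step you flag yourself, and your proposed repair does not work. First, $\lambda$ (hence $Z$) cannot ``be taken quasi-homogeneous'': $\omega$ is an arbitrary symplectic form, not quasi-homogeneous, so no primitive of it is quasi-homogeneous; at best the graded components of $\lambda$ vanish on $N$ separately. Second, and decisively, the germ $\{Z=0\}=\{\lambda=0\}$ need not be non-singular for a legitimate choice of $\lambda$. Take $N=\{p_1=p_2=0,\ q_1^2=q_2^3\}$ in $(\mathbb R^4,\ \omega_0=dp_1\wedge dq_1+dp_2\wedge dq_2)$, a quasi-homogeneous curve, and $\lambda=p_1\,dq_1+p_2\,dq_2+d\bigl((q_1^2-q_2^3)p_1\bigr)$. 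Then $\lambda$ vanishes pointwise on $N$ and $d\lambda=\omega_0$, but the component equations $p_1(1+2q_1)=0$, $p_2-3p_1q_2^2=0$, $q_1^2-q_2^3=0$ give $\{\lambda=0\}=N$ itself, which is singular at $0$. So ``quasi-homogeneity plus finite determinacy forces $Z$ to be effectively linearizable with smooth graded zero set'' is false as a mechanism: the theorem is true (here $\{p_1=p_2=0\}$ is a Lagrangian containing $N$), but your route to it breaks, because the conclusion depends on a careful choice of the primitive, which is precisely the hard content.

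Your ``alternative packaging'' via Theorem~\ref{thm A} is the right reduction in principle --- if one exhibits a single symplectic form $\omega_1$ and a non-singular $\omega_1$-Lagrangian $L_1\supseteq N$, then $[\omega_1]_N=0=[\omega]_N$ and Theorem~\ref{thm A} transports $L_1$ to a Lagrangian for $\omega$ containing $N$ --- but as written it is circular: you propose to produce the pair $(\omega_1,L_1)$ ``by this construction,'' i.e.\ by the very step shown above to fail. What this reduction actually requires is the purely geometric fact that $[\omega]_N=0$ forces $N$ to lie in some non-singular $n$-dimensional submanifold (any such germ is Lagrangian for a suitable symplectic structure), and that is exactly what remains unproven in your sketch. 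A correct completion along your Liouville-field line needs substantially more: from $\mathcal L_Z\omega=\omega$ one gets $A^{\top}\Omega+\Omega A=\Omega$ for $A=(DZ)_0$, so eigenvalues pair as $\mu\leftrightarrow 1-\mu$ and $\Omega(E_\mu,E_\nu)=0$ unless $\mu+\nu=1$; hence the full center subspace (not just $\ker A$) is isotropic, and one must then pass to an invariant-manifold argument capturing all nearby zeros of $Z$, with attendant regularity issues --- none of which is present in, or implied by, your finite-determinacy remark. In summary: the forward direction and the primitive-vanishing-on-$N$ lemma are complete and match the technology of \cite{DJZ1}, but the converse has a genuine hole at its decisive step.
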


In the paper we use the following notations:

\smallskip

\noindent $\bullet$  $\algrestall $: \ \ the vector space
consisting of  the algebraic restrictions of germs of all $2$-forms on
$\mathbb R^{2n}$ to the germ of a subset $N\subset \mathbb
R^{2n}$;

\smallskip

\noindent $\bullet$  $\algrestclosed$: \ \ the subspace of
$\algrestall $ consisting of the algebraic restrictions of germs of
all closed $2$-forms on $\mathbb R^{2n}$ to $N$;

\smallskip

\noindent $\bullet$ $\algrest $: \ \ the open set in
$\algrestclosed$ consisting of the algebraic restrictions of germs of
all symplectic $2$-forms on $\mathbb R^{2n}$ to $N$.

\medskip

To obtain a classification of the algebraic restrictions we use the following proposition.
\begin{prop}\label{elimin1}
Let $a_1, \cdots, a_p$ be a quasi-homogeneous basis of
quasi-degrees
$\delta_1\le\cdots\le\delta_s<\delta_{s+1}\le\cdots\le\delta_p$ of
the space of algebraic restrictions of closed $2$-forms to $N$.
Let $a=\sum_{j=s}^p c_ja_j$, where $c_j\in \mathbb R$ for
$j=s,\cdots, p$ and $c_s\ne 0$.

If there exists a tangent quasi-homogeneous vector field $X$ over
$N$ such that $\mathcal L_Xa_s=ra_k$ for $k>s$ and $r\ne 0$ then
$a$ is diffeomorphic to $\sum_{j=s}^{k-1} c_ja_j+\sum_{j=k+1}^p
b_j a_j$, for some $b_j\in \mathbb R, \ j=k+1,\cdots,p$.
\end{prop}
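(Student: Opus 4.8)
The plan is to produce the desired diffeomorphism as the time-$t$ flow of the tangent vector field $X$, choosing $t$ so as to cancel the $a_k$-term, with all bookkeeping controlled by the quasi-homogeneous grading. First I would set up the flow: since $X$ is a quasi-homogeneous vector field tangent to $N$, it vanishes at $0$ and its local flow $\Phi_t$ is defined near $0$ and satisfies $\Phi_t(N)=N$. Hence each $\Phi_t$ is a diffeomorphism-germ preserving $N$, so for every $t$ the algebraic restriction $\Phi_t^*[a]_N$ is diffeomorphic to $[a]_N$, and it suffices to find a single value $t_0$ for which $\Phi_{t_0}^*[a]_N$ has the asserted form. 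Because $\Phi_t$ preserves $N$ it maps $\mathcal A_0^2(N,M)$ into itself and commutes with $d$, so $\Phi_t^*$ acts linearly on the finite-dimensional space $\algrestclosed$; its infinitesimal generator is the operator $D\colon[\omega]_N\mapsto[\mathcal L_X\omega]_N$, which is well defined on $\algrestclosed$ for the same reason.

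Next I would exploit the grading. As $X$ is quasi-homogeneous of some quasi-degree $\delta_X$, the operator $D$ raises quasi-degree by $\delta_X$; comparing degrees in $\mathcal L_X a_s = r a_k$ gives $\delta_X=\delta_k-\delta_s>0$. Since $\algrestclosed$ is finite-dimensional and graded, with top quasi-degree $\delta_p$, the operator $D$ is nilpotent, so $\Phi_t^*=\exp(tD)=\sum_{m\ge 0}\tfrac{t^m}{m!}D^m$ is a finite sum and there are no convergence issues.

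Finally comes the elimination. Writing $\Phi_t^*a=a+tDa+\tfrac{t^2}{2}D^2a+\cdots$, I track the homogeneous parts. The term $D^m a$ has lowest quasi-degree $\delta_s+m\delta_X$, so for $m\ge 2$ it only affects degrees $>\delta_k$ and cannot touch the part of degree $\le\delta_k$. In $Da=\sum_{j\ge s}c_jDa_j$ the summand $Da_j$ has degree $\delta_j+\delta_X$, which equals $\delta_k$ only for $j=s$ (using $\delta_{s+1}>\delta_s$), and there $D a_s = r a_k$. Consequently the part of $\Phi_t^*a$ of degree $<\delta_k$ is unchanged, while in degree $\delta_k$ only the $a_k$-coefficient changes, from $c_k$ to $c_k+t\,r\,c_s$. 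Choosing $t_0=-c_k/(r c_s)$, which is legitimate since $r\ne 0$ and $c_s\ne 0$, annihilates the $a_k$-component while leaving the coefficients $c_s,\dots,c_{k-1}$ intact; the coefficients in degrees $>\delta_k$ are altered into some reals $b_{k+1},\dots,b_p$. This is exactly the asserted normal form $\sum_{j=s}^{k-1}c_ja_j+\sum_{j=k+1}^p b_ja_j$.

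The main obstacle I expect is the bookkeeping guaranteeing that only the $a_k$-coefficient is affected in degrees $\le\delta_k$: this rests on the strict gap $\delta_s<\delta_{s+1}$, so that $a_s$ is the unique generator of its degree occurring in $a$, together with the precise hypothesis $\mathcal L_X a_s = r a_k$ rather than $a_s$ mapping to a combination of several degree-$\delta_k$ generators. The only other point to treat with care is the well-definedness of $D$ on $\algrestclosed$, i.e.\ that $\mathcal L_X$ preserves $\mathcal A_0^2(N,M)$, which is where tangency of $X$ to $N$ is used; it follows directly from $\Phi_t(N)=N$.
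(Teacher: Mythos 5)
Your proof is correct, but it takes a genuinely different route from the paper's. The paper does not prove Proposition \ref{elimin1} directly: it derives it as a modification of Theorem 6.13 of \cite{D}, whose proof is by the Moser homotopy method --- one joins $a$ to the target normal form by a path $\mathcal B_t$ of algebraic restrictions and seeks a family $\Phi_t$ of symmetries of $N$ with $\Phi_t^*\mathcal B_t=\mathcal B_0$, which reduces to solving the linear equation $\mathcal L_{V_t}\mathcal B_t=\frac{d}{dt}\mathcal B_t$ for a time-dependent tangent field $V_t$ and integrating it (this is exactly the scheme carried out explicitly in the proof of Lemma \ref{w8lem1}, where $V_t=\sum_k b_k(t)X_k$ leads to a triangular linear system). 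You instead integrate the single autonomous field $X$ and work with the induced linear action $\exp(tD)$, $D=[\mathcal L_X\,\cdot\,]_N$, on the finite-dimensional graded space $\algrestclosed$: nilpotency of $D$ follows since it raises quasi-degree by $\delta_X=\delta_k-\delta_s>0$, and the two hypotheses you correctly identify as essential --- the strict gap $\delta_s<\delta_{s+1}$ and the fact that $\mathcal L_Xa_s$ is a \emph{pure} multiple $ra_k$ rather than a combination of degree-$\delta_k$ generators --- guarantee that in degrees $\le\delta_k$ only the $a_k$-coefficient moves, affinely in $t$, so the algebraic choice $t_0=-c_k/(rc_s)$ finishes the argument; your degree bookkeeping also rules out the appearance of components of index $<s$ (even when $\delta_{s-1}=\delta_s$), since $D$ strictly raises degree. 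Your other care items (well-definedness of $D$ on algebraic restrictions via $\Phi_t(N)=N$, existence of the flow as a germ because $X$ has positive quasi-degree and hence vanishes at $0$ --- note this vanishing is justified only once $\delta_X>0$ is established, so that observation should come after the degree count) are handled properly. As for what each approach buys: yours is self-contained and purely algebraic --- no time-dependent ODE, no solvability condition beyond $rc_s\ne 0$, and the normal-form coefficients $c_s,\dots,c_{k-1}$ are visibly untouched; the Moser homotopy used in \cite{D} and in the paper's lemmas is more flexible, since a time-dependent combination of tangent fields can eliminate several coefficients simultaneously in one stroke, whereas your one-field argument must be applied once per eliminated basis element, exactly matching the single-step statement of Proposition \ref{elimin1}.
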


Proposition \ref{elimin1} is a modification of  Theorem 6.13 formulated and proved in  \cite{D}. It was formulated for algebraic restrictions to a parameterized curve but we can generalize this theorem for any subset $N$. The proofs of the cited theorem and Proposition \ref{elimin1} are similar and are based on the Moser homotopy method.

\medskip

For calculating discrete invariants we use the following propositions.

\begin{prop}[\cite{DJZ2}]\label{sm}
The symplectic multiplicity  of the germ of a quasi-homogeneous subset $N$ in a symplectic space is equal to the codimension of the orbit of the algebraic restriction $[\omega ]_N$ with respect to the group of local diffeomorphisms preserving $N$  in the space of algebraic restrictions of closed  $2$-forms to $N$.
\end{prop}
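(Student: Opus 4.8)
The plan is to realize the symplectic multiplicity as the dimension of the base of a natural ``orbit map'' whose fibres are exactly the symplectic orbits, and then to read off the codimension of those fibres from the codimension of an orbit in the finite-dimensional space $\algrestclosed$. Fix the model germ $N$, let $G$ be the group of local diffeomorphisms and $G_N\subset G$ the subgroup preserving $N$. Every germ in the diffeomorphism orbit of $N$ has the form $\Phi(N)$ for some $\Phi\in G$, and I assign to it the algebraic restriction $[\Phi^*\omega]_N$ of the symplectic form $\Phi^*\omega$ to the fixed set $N$. If $\Phi_1(N)=\Phi_2(N)$ then $\Phi_2^{-1}\Phi_1\in G_N$, so this assignment is well defined as a map $\Psi$ from the diffeomorphism orbit of $N$ into the quotient $\algrest/G_N$. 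The two quantities to be compared are the codimension of the symplectic orbit of $N$ inside its diffeomorphism orbit (the left-hand side, by Definition \ref{def-mu}) and the codimension of the $G_N$-orbit of $[\omega]_N$ inside $\algrestclosed$ (the right-hand side).

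First I would identify the fibres of $\Psi$. Two sets $\Phi_1(N)$, $\Phi_2(N)$ have the same image under $\Psi$ if and only if $[\Phi_1^*\omega]_N$ and $[\Phi_2^*\omega]_N$ are $G_N$-equivalent; transporting back, this is the same as $[\omega]_{\Phi_1(N)}$ and $[\omega]_{\Phi_2(N)}$ being diffeomorphic, which by Theorem \ref{thm A} is precisely symplectic equivalence of $\Phi_1(N)$ and $\Phi_2(N)$. Hence the fibres of $\Psi$ are exactly the symplectic orbits. Next I would check submersivity: differentiating the orbit map $\Phi\mapsto[\Phi^*\omega]_N$ at the identity along the flow of a vector field $X$ gives $X\mapsto[\mathcal L_X\omega]_N=[d\,i_X\omega]_N\in\algrestclosed$. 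Since $\omega$ is nondegenerate, $X\mapsto i_X\omega$ is a bijection onto germs of $1$-forms, and by the Poincar\'e lemma every closed $2$-form germ $\sigma$ at $0$ is exact, $\sigma=d\gamma$; taking $i_X\omega=\gamma$ shows the differential is onto $\algrestclosed$. Therefore $\Psi$ is a submersion onto $\algrest/G_N$, and for a submersion the codimension of a fibre equals the dimension of the base at the image point, that is, the codimension of the $G_N$-orbit of $[\omega]_N$ in $\algrest$; since $\algrest$ is open in $\algrestclosed$ this equals the codimension in $\algrestclosed$, which is the assertion.

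The main obstacle is making this submersion and dimension count rigorous in the germ category, where the relevant orbits are orbits of infinite-dimensional groups. I would circumvent this by working infinitesimally and reducing everything to the finite-dimensional space $\algrestclosed$ (finite-dimensional by \cite{D}, and for a quasi-homogeneous complete intersection of dimension equal to the multiplicity by \cite{DJZ2}). Concretely one must verify the tangent-space identity $T_N(\text{symplectic orbit})=\ker d\Psi$, equivalently that a vector field $X$ for which $[\mathcal L_X\omega]_N$ is tangent to the $G_N$-orbit differs from a symplectic vector field by one tangent to $N$; this is exactly the infinitesimal content of the relative Darboux--Givental statement, and is where quasi-homogeneity of $N$, hence the applicability of Theorem \ref{thm A}, is essential. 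Granting this identification, the codimension equality of the previous paragraph is forced and the proposition follows.
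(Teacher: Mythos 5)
The paper gives no proof of this proposition -- it is quoted verbatim from \cite{DJZ2}, where the argument is exactly the one you reconstruct: Theorem \ref{thm A} shows the fibres of your map $\Psi$ are precisely the symplectic orbits, and the infinitesimal computation $X\mapsto[\mathcal L_X\omega]_N=[d\,i_X\omega]_N$, surjective onto $[Z^2(\mathbb R^{2n})]_N$ by nondegeneracy of $\omega$ and the Poincar\'e lemma, turns the orbit-within-orbit codimension of Definition \ref{def-mu} into the codimension of the orbit of $[\omega]_N$ under the symmetries of $N$. The single step you assert rather than verify -- that $[\mathcal L_X\omega]_N$ tangent to that orbit forces $X$ to be symplectic modulo a field tangent to $N$ -- is correctly identified and does hold: it is the relative Poincar\'e lemma for quasi-homogeneous sets of \cite{DJZ1} (a closed form vanishing pointwise on $N$ is $d$ of a form vanishing on $N$, so $i_X\omega$ splits into an exact $1$-form plus one vanishing on $N$, and a vector field vanishing on $N$ is tangent to $N$), so your proof is complete as sketched and coincides with the source's.
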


\begin{prop}[\cite{DJZ2}]\label{ii}
The index of isotropy  of the germ of a quasi-homogeneous subset $N$ in a symplectic space $(\mathbb R^{2n}, \omega )$ is equal to the maximal order of vanishing of closed $2$-forms representing the algebraic restriction $[\omega ]_N$.
\end{prop}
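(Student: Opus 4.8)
The plan is to prove the two inequalities separately, writing $\iota(N)$ for the maximal order of vanishing at $0$ among closed $2$-forms representing $[\omega]_N$ and keeping $ind(N)$ for the index of isotropy. Throughout I would work with germs at $0$ and lean on Theorems \ref{thm A} and \ref{thm B}; the two structural facts I expect to use repeatedly are linearity of the algebraic restriction and the fact that pulling a form back to a submanifold never lowers its order of vanishing.

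To show $\iota(N)\ge ind(N)$, I would start from an arbitrary non-singular submanifold $M\supseteq N$ and manufacture a closed representative of $[\omega]_N$ out of $\omega|_{TM}$. Choosing coordinates $(u,v)$ with $M=\{v=0\}$ and the retraction $\pi(u,v)=(u,0)$, the form $\omega'=\pi^*(\iota_M^*\omega)$ is closed (being the pullback of the closed form $\iota_M^*\omega$) and has order of vanishing at $0$ exactly equal to that of $\omega|_{TM}$, since its coefficients are the functions $a_{ij}(u,0)$ viewed as independent of $v$. The only thing to check is that $\omega-\omega'\in\mathcal A^2_0(N,\mathbb R^{2n})$. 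Pulling back to $M$ annihilates it, so in coordinates it is a combination of $du\wedge du$ terms with coefficients divisible by the $v_j$ (hence vanishing on $N\subseteq\{v=0\}$) together with terms containing at least one conormal differential $dv_j$. For the latter the key observation is that each $dv_j\wedge\gamma$ equals $d(v_j\gamma)-v_j\,d\gamma$, and since $v_j$ vanishes on $N$ the first summand is $d\beta$ with $\beta\in\Lambda^1_N(\mathbb R^{2n})$ and the second lies in $\Lambda^2_N(\mathbb R^{2n})$; thus every such term lies in $\mathcal A^2_0(N,\mathbb R^{2n})$. Hence $[\omega']_N=[\omega]_N$, and taking the maximum over $M$ gives $\iota(N)\ge ind(N)$.

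For the reverse inequality $ind(N)\ge\iota(N)$, Theorem \ref{thm B} does the real work. Let $\omega'$ be a closed representative realizing the maximal order $k=\iota(N)$; I may assume $k\ge 1$, the case $k=0$ being trivial and the case $k=\infty$ meaning $[\omega]_N=0$, which Theorem \ref{thm B} handles directly. Set $\eta=\omega-\omega'$. Since $\omega'$ vanishes to order $k\ge 1$ at $0$ one has $\eta(0)=\omega(0)$, so $\eta$ is a germ of symplectic form, and by linearity $[\eta]_N=[\omega]_N-[\omega']_N=0$. Theorem \ref{thm B} then produces a non-singular submanifold $L$, Lagrangian for $\eta$, with $N\subseteq L$; restricting to $L$ gives $\omega|_{TL}=\omega'|_{TL}+\eta|_{TL}=\omega'|_{TL}$, which vanishes to order $\ge k$ because $\omega'$ does. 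Taking $M=L$ yields $ind(N)\ge k=\iota(N)$, which together with the first step closes the argument.

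The main obstacle, and the conceptual heart of the argument, is precisely this trick $\eta=\omega-\omega'$: it converts a closed and generally degenerate representative into an honest symplectic form whose algebraic restriction vanishes, so that the geometric content of Theorem \ref{thm B} supplies the isotropic submanifold realizing the index of isotropy. The remaining points are routine: checking that orders of vanishing are preserved under the retraction and the inclusion pullbacks, and the bookkeeping of the edge cases $k\in\{0,\infty\}$.
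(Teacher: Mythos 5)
Your proof is correct. There is actually no in-paper argument to compare against: Proposition \ref{ii} is quoted from \cite{DJZ2} without proof, and your two inequalities reconstruct essentially the standard argument from that reference. Both halves check out: for $\iota(N)\ge ind(N)$, the representative $\omega'=\pi^*(\iota_M^*\omega)$ is closed, has the same vanishing order at $0$ as $\omega\vert_{TM}$, and your verification that any $2$-form pulling back to zero on $M=\{v=0\}$ lies in $\mathcal A^2_0(N,\mathbb R^{2n})$ (coefficients of the $du\wedge du$ terms vanish on $M\supseteq N$, and $dv_j\wedge\gamma=d(v_j\gamma)-v_j\,d\gamma$ with $v_j$ vanishing on $N$) is exactly right; for $ind(N)\ge\iota(N)$, the observation that $\eta=\omega-\omega'$ is closed, nondegenerate at $0$ when $k\ge 1$, and satisfies $[\eta]_N=0$, so that Theorem \ref{thm B} produces $L\supseteq N$ with $\omega\vert_{TL}=\omega'\vert_{TL}$ vanishing to order at least $k$, is the conceptual heart of the published proof as well. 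One small point deserves to be made explicit: in the smooth category $k=\infty$ means a representative flat at $0$, and your assertion that this forces $[\omega]_N=0$ is not purely formal --- it rests on the fact, valid for quasi-homogeneous $N$ and proved in \cite{DJZ2}, that the algebraic restriction is determined by a finite jet (algebraic restrictions of forms of all sufficiently high quasi-degree vanish, as the present paper also uses implicitly in Tables \ref{tabw81} and \ref{tabw91}). With that reference supplied, the argument is complete.
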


\begin{prop}[\cite{D}]\label{lto}
Let $f$ be the germ of a quasi-homogeneous curve such that the algebraic restriction of a symplectic form to it can be represented by a closed $2$-form vanishing at $0$. Then the Lagrangian tangency order of the germ of a quasi-homogeneous curve $f$ is the maximum of the order of vanishing on $f$ over all $1$-forms $\alpha$ such that $[\omega]_f=[d\alpha]_f$
\end{prop}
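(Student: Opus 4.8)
The plan is to obtain the equality from two inequalities, exploiting the correspondence between smooth Lagrangian submanifolds and primitive $1$-forms of the symplectic structure. I work in Darboux coordinates, where $\omega=\sum_i dp_i\wedge dq_i=d\lambda$ with the Liouville form $\lambda=\sum_i p_i\,dq_i$. The hypothesis enters at the outset: since $[\omega]_f$ is represented by a closed $2$-form vanishing at $0$ and $f$ is quasi-homogeneous, the quasi-homogeneous Poincar\'e lemma (the homotopy operator given by contraction with the Euler field) produces a quasi-homogeneous $1$-form $\alpha_0$, vanishing at $0$, whose differential represents $[\omega]_f$; by Proposition \ref{ii} this differential may be taken to vanish to order $ind(N)$. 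These quasi-homogeneous primitives are the objects at which the maximum on the right-hand side is realized, and organizing everything by quasi-degree is what keeps the orders under control.

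For the inequality $Lt(f)\ge \mathrm{ord}_f(\alpha)$ for every admissible $\alpha$ (one with $[\omega]_f=[d\alpha]_f$), where $\mathrm{ord}_f$ denotes the order of vanishing on $f$, I would attach to $\alpha$ a smooth Lagrangian submanifold. The key structural fact is that a $1$-form with $d\alpha=\omega$ that vanishes on an $n$-dimensional submanifold $L$ forces $\omega|_{TL}=d(\alpha|_{TL})=0$, so that $L$ is automatically Lagrangian; conversely, from such an $\alpha$ one builds $L$ by a generating-function construction adapted to the quasi-homogeneous weights, choosing the $n$ defining functions so that $f$ meets $L$ to order $\mathrm{ord}_f(\alpha)$. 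Taking the maximum over $\alpha$ gives $Lt(f)\ge\max_{\alpha}\mathrm{ord}_f(\alpha)$.

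For the reverse inequality I start from a Lagrangian $L$ realizing $t(f,L)=Lt(f)$ and manufacture an admissible primitive. Because $L$ is Lagrangian, $\lambda|_L$ is closed, hence equal to $dg$ for some $g\in C^\infty(L)$; extending $g$ to a function $G$ on the ambient space, the form $\alpha_L=\lambda-dG$ satisfies $d\alpha_L=\omega$ and $\alpha_L|_{TL}=0$. Writing $\alpha_L$ in the conormal of $L$ and using that $f$ is tangent to $L$ to order $t(f,L)$, I would check that the order of vanishing of $\alpha_L$ on $f$ is at least $t(f,L)$. This produces an admissible $\alpha$ with $\mathrm{ord}_f(\alpha)\ge Lt(f)$, so $\max_{\alpha}\mathrm{ord}_f(\alpha)\ge Lt(f)$, and combining the two inequalities yields the equality.

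The hard part will be the exact bookkeeping of orders in this correspondence, i.e. arranging that the tangency order of $f$ to the constructed Lagrangian matches the order of vanishing of the primitive instead of merely bounding it. The difficulty is twofold. First, pulling a conormal $1$-form back along $f$ brings in the velocity factors $q_i'(t)$, which a priori over-count the tangency, so one must split the construction by quasi-homogeneous degree to cancel this discrepancy; Proposition \ref{elimin1} is the natural tool for normalizing the primitive within its class. Second, one must guarantee that the submanifold built from a primitive is genuinely involutive, hence Lagrangian: the curve is typically tangent to infinite order to non-Lagrangian submanifolds, and it is precisely the Lagrangian (involutivity) constraint that pins the maximum to a finite value and is encoded by the condition $d\alpha=\omega$. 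The degenerate case $ind(N)=\infty$ is handled separately by Theorem \ref{thm B}: there $[\omega]_f=0$, $f$ lies in a smooth Lagrangian submanifold, and both sides are infinite.
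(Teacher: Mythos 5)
First, a point of orientation: the paper does not prove Proposition \ref{lto} at all --- it is imported from \cite{D} --- so there is no in-paper argument to compare against line by line; your proposal can only be measured against what a complete proof must contain. Your skeleton (two inequalities, mediated by the correspondence between smooth Lagrangian submanifolds and $1$-forms vanishing on them whose differential is $\omega$, with the degenerate case $[\omega]_f=0$ dispatched by Theorem \ref{thm B}) is the right one, and your observation that an $n$-dimensional submanifold on which a primitive of $\omega$ vanishes pointwise is automatically Lagrangian is correct and genuinely useful. But there is a gap that starts with a misreading of the statement. The ``order of vanishing on $f$'' of $\alpha$ means the order of vanishing of $\alpha$ as a tensor along the curve, i.e.\ the minimal order at $0$ of the coefficient functions of $\alpha$ composed with $f$; it is \emph{not} the order of the pullback $f^*\alpha$. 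Your worry about ``velocity factors $q_i'(t)$'' shows you are computing with the pullback, and under that convention the proposition is false: for $N=W_8$ with $[\omega]_N=[\theta_4]_N$ (class $(W_8)^3$ with $c_1=c_2=0$), the form $\alpha=x_2x_3\,dx_3$ satisfies $d\alpha=\theta_4$ exactly, hence is admissible, and along $f(t)=(t^6,t^5,-t^4)$ one gets $f^*\alpha=4t^{12}\,dt$, while $Lt(f)=9$ by Table \ref{tabw8-lagr}; the tensor order is $\mathrm{ord}(x_2x_3\circ f)=\mathrm{ord}(-t^9)=9$, as the proposition requires. With the correct convention the velocity-factor discrepancy you plan to fight using Proposition \ref{elimin1} simply does not exist --- and Proposition \ref{elimin1} could not do that job anyway, since it normalizes algebraic restrictions up to diffeomorphisms preserving $N$, not primitives within a fixed admissibility class. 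The appeal to Proposition \ref{ii} at the outset is likewise idle: the index of isotropy plays no role in the argument.

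The larger gap is that the hard implication, $Lt(f)\ge\mathrm{ord}_f(\alpha)$ for every admissible $\alpha$, is never actually proved: the ``generating-function construction adapted to the quasi-homogeneous weights'' is named, you correctly identify involutivity as the obstruction, but no construction is given --- your closing list of ``hard parts'' is precisely the content of the theorem. Two concrete missing steps. First, an admissible $\alpha$ satisfies only $[d\alpha]_f=[\omega]_f$, not $d\alpha=\omega$; to reduce to honest primitives you need the relative Poincar\'e lemma for quasi-homogeneous germs (\cite{DJZ1}, used throughout \cite{DJZ2}), which writes $\alpha=\lambda_0+dg+\beta$ with $\lambda_0$ the Liouville form and $\beta$ vanishing pointwise on the curve, together with the small but essential remark that such a $\beta$ has identically zero coefficients along $f$ and so does not affect the tensor order. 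Second, from the primitive $\lambda_0+dg$, whose coefficients are $p_i+\partial g/\partial q_i$ and $\partial g/\partial p_i$, one must manufacture $n$ independent functions cutting out a \emph{Lagrangian} submanifold and vanishing along $f$ to the given order; the naive candidate $\{p_i+\partial g/\partial q_i=0,\ i=1,\dots,n\}$ is not Lagrangian unless $g$ depends on the $q$-variables alone, and the reduction to that situation (via the quasi-homogeneous grading of $g$ and an adapted choice of Lagrangian fibration) is exactly what you leave blank. Finally, in the direction you do elaborate, $\alpha_L=\lambda-dG$ satisfies only $\alpha_L|_{TL}=0$, which is strictly weaker than vanishing of the tensor at points of $L$; your conormal estimate needs the stronger property, so you must choose the extension $G$ with $dG=\lambda$ at every point of $L$ (possible precisely because $L$ is Lagrangian), or avoid the issue entirely by flattening $L$ to $\{p=0\}$ in Darboux coordinates and taking $\alpha=\sum_i p_i\,dq_i$, whose coefficients are defining functions of $L$, giving $\mathrm{ord}_f(\alpha)=t(f,L)$ at once.
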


\subsection{Proofs for  $W_8$ singularity}
\subsubsection{Algebraic restrictions to  $W_8$ and their classification}\label{w8-class}

One has the following relations for $(W_8)$-singularities
\begin{equation}
[d(x_2^2+x_1x_3)]_{W_8}=[2x_2dx_2+ x_1dx_3+x_3dx_1]_{W_8}=0,
\label{w81}
\end{equation}
\begin{equation}
[d(x_1^2+x_3^3)]_{W_8}=[2x_1dx_1+3x_3^2dx_3]_{W_8}=0.
\label{w82}
\end{equation}
Multiplying these relations by suitable $1$-forms we obtain the relations in Table \ref{tabw81}.
\renewcommand*{\arraystretch}{1.1}
\begin{footnotesize}
\begin{table}[h]
\begin{center}
\begin{tabular}{|c|p{6.2cm}|p{4.5cm}|}
\hline
       $\delta$ & Relations & Proof\\ \hline

   $14$ & $[x_2dx_2 \wedge dx_3]_N=-\frac{1}{2}[x_3dx_1 \wedge dx_3]_N$
                & (\ref{w81})$\wedge\, dx_3$ \\ \hline

   $15$ &  $[x_1dx_2 \wedge dx_3]_N=[x_3dx_1 \wedge dx_2]_N$
                & (\ref{w81})$\wedge\, dx_2$ \\ \hline
   $16$ & $[x_2dx_1\wedge dx_2]_N=-\frac{1}{2}[x_1dx_1\wedge dx_3]_N=0$ &  (\ref{w82})$\wedge\, dx_3$ and (\ref{w81})$\wedge\, dx_1$ \\ \hline

    $17$ &  $[x_3^2dx_2 \wedge dx_3]_N=\frac{2}{3}[x_1dx_1\wedge dx_2]_N$  & (\ref{w82})$\wedge\, dx_2$ \\ \hline

   $18$ & $[x_3^2dx_1 \wedge dx_3]_N=2[x_2x_3dx_2\wedge dx_3]_N=0$ &  (\ref{w82})$\wedge\, dx_1$ and (\ref{w81})$\wedge\, x_3dx_3$ \\ \hline

   $19$ &  $[x_2^2dx_2\wedge dx_3]_N=-\frac{1}{2}[x_2x_3dx_1\wedge dx_3]_N$
      & (\ref{w81})$\wedge\, x_2dx_3$ \\
      & $[x_2^2dx_2\wedge dx_3]_N=-[x_1x_3dx_2\wedge dx_3]_N=$ \newline \ \ \ \ \ \ \ $=-[x_3^2dx_1 \wedge dx_2]_N$ &  (\ref{w81})$\wedge\, x_3dx_2$  \newline and $[x_2^2+x_1x_3]_N=0$ \\  \hline

   $20$ &   $[\alpha]_N=0$ for all 2-forms $\alpha$ of quasi-degree $20$
                & relations for $\delta\in\{14,15,16\}$ \newline and $[x_2^2+x_1x_3]_N=0$ \\   \hline

   $21$ &   $[\alpha]_N=0$ for all 2-forms $\alpha$ of quasi-degree $21$
                & relations for $\delta\in\{15,16,17\}$ \newline and $[x_1^2+x_3^3]_N=[x_2^2+x_1x_3]_N=0$\\   \hline

   $22$ &   $[\alpha]_N=0$ for all 2-forms $\alpha$ of quasi-degree $22$
                & relations for $\delta\in\{16,17,18\}$ \newline and $[x_1^2+x_3^3]_N=[x_2^2+x_1x_3]_N=0$\\   \hline

   $23$ &   $[\alpha]_N=0$ for all 2-forms $\alpha$ of quasi-degree $23$
                & relations for $\delta\in\{17,18,19\}$ \newline and $[x_1^2+x_3^3]_N=0$ \\   \hline
   $24$ &   $[\alpha]_N=0$ for all 2-forms $\alpha$ of quasi-degree $24$
                & relations for $\delta\in\{18,19,20\}$ \newline and $[x_1^2+x_3^3]_N=0$ \\   \hline
   $25$ &   $[\alpha]_N=0$ for all 2-forms $\alpha$ of quasi-degree $25$
                & relations for $\delta\in\{19,20,21\}$  \\   \hline

   $\delta\!>\!25$ & $[\alpha]_N=0$ for all 2-forms $\alpha$ of quasi-degree $\delta\!>\!25$
                &  relations for $\delta> 19$ \\   \hline

\end{tabular}
\end{center}
\smallskip
\caption{\small Relations towards calculating $[\Lambda^2(\mathbb R^{2n})]_N$ for $N=W_8$}\label{tabw81}
\end{table}
\end{footnotesize}

 Using the method of algebraic restrictions and Table \ref{tabw81} we obtain the following proposition:

\begin{prop}
\label{w8-all}
The space $[\Lambda ^{2}(\mathbb R^{2n})]_{W_8}$ is a $9$-dimensional vector space spanned by the algebraic restrictions to $W_8$ of the $2$-forms

\smallskip

$\theta _1= dx_2\wedge dx_3, \;\; \theta _2=dx_1\wedge dx_3,\;\; \theta_3 = dx_1\wedge dx_2,$

\smallskip

  $\theta _4 = x_3dx_2\wedge dx_3,\;\; \theta _5 = x_2dx_2\wedge dx_3,$\;\; $\sigma_1 = x_1 dx_2\wedge dx_3,$ \ \ $\sigma_2 = x_2 dx_1\wedge dx_3,$

\smallskip

$\theta _7= x_3^2 dx_2\wedge dx_3$,\;\; $\theta _8= x_2^2 dx_2\wedge dx_3$.
\end{prop}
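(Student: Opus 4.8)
The plan is to determine the space $\bigl[\Lambda^2(\mathbb{R}^{2n})\bigr]_{W_8}$ by first establishing an upper bound on its dimension through the relations collected in Table \ref{tabw81}, and then showing that the nine listed algebraic restrictions are linearly independent, hence form a basis. Since $W_8$ is quasi-homogeneous with weights $w(x_1)=6$, $w(x_2)=5$, $w(x_3)=4$, the space $\bigl[\Lambda^2(\mathbb{R}^{2n})\bigr]_{W_8}$ is graded by quasi-degree, and I would organize the entire computation degree by degree. A monomial $2$-form $x_1^{a}x_2^{b}x_3^{c}\,dx_i\wedge dx_j$ has a well-defined quasi-degree, and the basic observation is that any $2$-form involving $dx_k$ for $k\geq 4$ has zero algebraic restriction to $W_8$ (since $x_{\geq 4}=0$ on $N$ and these coordinates are free), so only forms built from $dx_1, dx_2, dx_3$ with coefficients in $\mathbb{R}[x_1,x_2,x_3]$ contribute.

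First I would record the two defining relations \eqref{w81} and \eqref{w82}, which express that $d(x_2^2+x_1x_3)$ and $d(x_1^2+x_3^3)$ have zero algebraic restriction; together with the relations $[x_2^2+x_1x_3]_N=0$ and $[x_1^2+x_3^3]_N=0$ coming directly from $N\subset\{H=0\}$, these are the engine of all reductions. Then I would work upward in quasi-degree, wedging these two relations with suitable monomial $1$-forms to produce the entries of Table \ref{tabw81}. The key structural fact to extract is that at each quasi-degree $\delta$ the relations cut the naive monomial spanning set down, and that for every $\delta>25$ (and for the intermediate degrees $20$ through $25$) all algebraic restrictions vanish; this is exactly what the lower rows of the table assert, and it guarantees that $\bigl[\Lambda^2(\mathbb{R}^{2n})\bigr]_{W_8}$ is finite-dimensional and concentrated in quasi-degrees at most $19$. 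Collecting the surviving monomials degree by degree yields the nine candidate generators $\theta_1,\theta_2,\theta_3,\theta_4,\theta_5,\sigma_1,\sigma_2,\theta_7,\theta_8$, so that the dimension is \emph{at most} $9$.

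The remaining and more delicate task is the matching lower bound: I must show these nine algebraic restrictions are linearly independent, i.e. that no nontrivial combination lies in $\mathcal{A}^2_0(W_8,\mathbb{R}^{2n})$. Because the generators sit in distinct quasi-degrees except for the pair $\sigma_1,\sigma_2$ in degree $15$ and the pair $\theta_4$ (degree not colliding) — more precisely the only quasi-degree hosting two generators is $15$ (carrying $\theta_2$ is degree... here I would simply verify degree by degree which generators share a degree) — the independence question reduces within each graded piece to a finite linear-algebra check. The cleanest way is to exhibit, for each generator, a test against the parameterization or an explicit obstruction showing it cannot be written as $\alpha+d\beta$ with $\alpha,\beta$ vanishing on $W_8$; equivalently I would verify that the induced map on the relevant jet space has the claimed rank. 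I expect the main obstacle to be the bookkeeping in the two-generator quasi-degree, where one must confirm that $\sigma_1$ and $\sigma_2$ are genuinely independent modulo the relation of degree $15$ in Table \ref{tabw81} (namely $[x_1\,dx_2\wedge dx_3]_N=[x_3\,dx_1\wedge dx_2]_N$), rather than being identified by it; once that single collision is resolved, the graded structure forces the full independence and the proposition follows.
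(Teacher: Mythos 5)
Your proposal is correct and follows essentially the same route as the paper: the paper also obtains Table \ref{tabw81} by wedging the relations \eqref{w81} and \eqref{w82} with suitable monomial $1$-forms, works degree by degree in the quasi-homogeneous grading (with weights $6,5,4$), notes the vanishing of all restrictions in quasi-degree $\ge 20$, and counts the nine surviving generators, leaving the linear independence implicit in the general method of algebraic restrictions of \cite{DJZ2} just as you sketch it. Your handling of the only delicate point matches the facts: quasi-degree $15$ is indeed the unique degree carrying two generators, and the $\delta=15$ relation identifies $[x_1\,dx_2\wedge dx_3]_N$ with $[x_3\,dx_1\wedge dx_2]_N$ rather than $\sigma_1$ with $\sigma_2$, so both survive and the total dimension is $9$.
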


Proposition \ref{w8-all} and results of Section \ref{method}  imply the following description of the space $[Z ^2(\mathbb R^{2n})]_{W_8}$ and the manifold $[{\rm Symp} (\mathbb R^{2n})]_{W_8}$.

\begin{thm} \label{w8-baza}
 The space $[Z^2(\mathbb R^{2n})]_{W_8}$ is a $8$-dimensional vector space
 spanned by the algebraic restrictions to $W_8$  of the quasi-homogeneous $2$-forms $\theta_i$  of degree $\delta_i$

\smallskip

$\theta _1= dx_2\wedge dx_3,\;\;\;\delta_1=9,$

\smallskip

$\theta _2=dx_1\wedge dx_3,\;\;\;\delta_2=10,$

\smallskip

$\theta_3 = dx_1\wedge dx_2,\;\;\;\delta_3=11,$

\smallskip

  $\theta _4 = x_3dx_2\wedge dx_3,\;\;\;\delta_4=13,$

  \smallskip

  $\theta _5 = x_2dx_2\wedge dx_3,\;\;\;\delta_5=14,$

\smallskip

$\theta _6=\sigma_1+\sigma_2=x_1dx_2\wedge dx_3+x_2dx_1\wedge dx_3,\;\;\;\delta_6=15, $

\smallskip

$\theta _7= x_3^2 dx_2\wedge dx_3,\;\;\;\delta_7=17$,

\smallskip

$\theta _8= x_2^2 dx_2\wedge dx_3,\;\;\;\delta_8=19$.

\smallskip

If $n\ge 3$ then $[{\rm Symp} (\mathbb R^{2n})]_{W_8} = [Z^2(\mathbb R^{2n})]_{W_8}$. The manifold $[{\rm Symp} (\mathbb R^{4})]_{W_8}$ is an open part of the $8$-space $[Z^2 (\mathbb R^{4})]_{W_8}$ consisting of algebraic restrictions of the form $[c_1\theta _1 + \cdots + c_8\theta _8]_{W_8}$ such that $(c_1,c_2,c_3)\ne (0,0,0)$.
\end{thm}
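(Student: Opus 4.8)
The plan is to read off $[Z^2(\mathbb R^{2n})]_{W_8}$ from Proposition \ref{w8-all} by isolating, inside the $9$-dimensional space $[\Lambda^2(\mathbb R^{2n})]_{W_8}$, the algebraic restrictions that admit a closed representative. The organizing device is exterior differentiation: since the algebraic restriction of a $2$-form $\omega$ already determines $[d\omega]_N$ (if $\omega-\omega'=\alpha+d\beta$ with $\alpha,\beta$ vanishing on $N$, then $d\omega-d\omega'=d\alpha$ has zero algebraic restriction), $d$ descends to a well-defined linear map $\bar d\colon[\Lambda^2(\mathbb R^{2n})]_{W_8}\to[\Lambda^3(\mathbb R^{2n})]_{W_8}$. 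The easy inclusion $[Z^2(\mathbb R^{2n})]_{W_8}\subseteq\ker\bar d$ holds, so it suffices to compute $\ker\bar d$ and to exhibit enough closed forms inside it.

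First I would evaluate $\bar d$ on the spanning set of Proposition \ref{w8-all}. The forms $\theta_1,\dots,\theta_5,\theta_7,\theta_8$ are closed, while $d\sigma_1=dx_1\wedge dx_2\wedge dx_3=-d\sigma_2$. Thus $\bar d$ annihilates the seven closed generators and sends $\sigma_1\mapsto[dx_1\wedge dx_2\wedge dx_3]_{W_8}$ and $\sigma_2\mapsto-[dx_1\wedge dx_2\wedge dx_3]_{W_8}$. Granting the nonvanishing $[dx_1\wedge dx_2\wedge dx_3]_{W_8}\neq0$, the map $\bar d$ has rank $1$, so $\ker\bar d$ is $8$-dimensional. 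The eight classes $\theta_1,\dots,\theta_5$, the closed combination $\theta_6=\sigma_1+\sigma_2$, $\theta_7$ and $\theta_8$ all lie in $[Z^2(\mathbb R^{2n})]_{W_8}\subseteq\ker\bar d$, are independent by Proposition \ref{w8-all}, and are quasi-homogeneous of the degrees read off from $w(x_1)=6,w(x_2)=5,w(x_3)=4$; hence they form a basis and all three spaces coincide. This proves the first assertion.

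The step I expect to be the crux is the nonvanishing $[dx_1\wedge dx_2\wedge dx_3]_{W_8}\neq0$, i.e.\ that the direction $\sigma_1-\sigma_2$ is genuinely not closeable. I would prove it by tracking the coefficient of $dx_1\wedge dx_2\wedge dx_3$ in an arbitrary element $\alpha+d\beta$ of $\mathcal A^3_0(W_8,\mathbb R^{2n})$. The coefficients of $\alpha\in\Lambda^3_{W_8}(\mathbb R^{2n})$ lie in the ideal $I(W_8)$, hence vanish at $0$; the $dx_1\wedge dx_2\wedge dx_3$-coefficient of $d\beta$ is a combination of the derivatives $\partial_{x_k}$, $k\le3$, of coefficients of $\beta\in\Lambda^2_{W_8}(\mathbb R^{2n})$, which also lie in $I(W_8)$. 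Since $\partial_{x_k}(x_1^2+x_3^3)$ and $\partial_{x_k}(x_2^2+x_1x_3)$ lie in the maximal ideal $\mathfrak m$, and $\partial_{x_k}x_j=0$ for $j\ge4$, $k\le3$, the product rule gives $\partial_{x_k}I(W_8)\subseteq\mathfrak m$. Therefore the $dx_1\wedge dx_2\wedge dx_3$-coefficient of every form in $\mathcal A^3_0(W_8,\mathbb R^{2n})$ vanishes at $0$, whereas that of $dx_1\wedge dx_2\wedge dx_3$ is the constant $1$; so $dx_1\wedge dx_2\wedge dx_3\notin\mathcal A^3_0(W_8,\mathbb R^{2n})$.

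Finally I would pass from closed to symplectic forms. The key observation is that $[dx_i\wedge dx_j]_{W_8}=0$ whenever $\max(i,j)\ge4$, because $dx_i\wedge dx_j=d(x_i\,dx_j)$ with $x_i\,dx_j$ vanishing on $W_8$; hence one may freely add $2$-forms involving the coordinates $x_{\ge4}$ without changing the algebraic restriction. For $n\ge3$ there are at least three such extra coordinates $y_1,y_2,y_3$: starting from a closed representative $\sum c_i\theta_i$ (supported on $dx_1,dx_2,dx_3$), I would add $dx_1\wedge dy_1+dx_2\wedge dy_2+dx_3\wedge dy_3$ together with a standard symplectic form on the remaining even set of extra coordinates; the block $\bigl(\begin{smallmatrix}A&I\\-I&0\end{smallmatrix}\bigr)$ is invertible for any antisymmetric $A$, so the result is nondegenerate at $0$ and has the same algebraic restriction, giving $[\mathrm{Symp}(\mathbb R^{2n})]_{W_8}=[Z^2(\mathbb R^{2n})]_{W_8}$. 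For $n=2$ only one extra coordinate $y$ is available, so the admissible completions are $\sum c_i\theta_i+\sum_i b_i\,dx_i\wedge dy$; computing $\omega\wedge\omega$ at $0$, where $\theta_4,\dots,\theta_8$ vanish, shows nondegeneracy is equivalent to $c_1b_1-c_2b_2+c_3b_3\neq0$ for some $b_i$, which is solvable exactly when $(c_1,c_2,c_3)\neq(0,0,0)$. This yields the stated description of $[\mathrm{Symp}(\mathbb R^4)]_{W_8}$.
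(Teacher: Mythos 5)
Your proposal is correct, and it is essentially a fleshed-out version of the argument the paper leaves implicit: the paper's entire ``proof'' of Theorem \ref{w8-baza} is the remark that it follows from Proposition \ref{w8-all} and the results of Section \ref{proofs} (i.e.\ the general machinery of \cite{DJZ2}), so the two ingredients you supply by hand are exactly the ones the paper outsources. Your computation of the descended differential $\bar d$ (all spanning classes closed except $\sigma_1,\sigma_2$, with $d\sigma_1=-d\sigma_2=dx_1\wedge dx_2\wedge dx_3$) together with the sandwich $\Span(\theta_1,\dots,\theta_8)\subseteq[Z^2(\mathbb R^{2n})]_{W_8}\subseteq\ker\bar d$ is the standard route to the $8$-dimensionality, and it cleanly avoids needing the converse inclusion $\ker\bar d\subseteq[Z^2]$. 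For the symplectic part, your explicit block completion with $\bigl(\begin{smallmatrix}A&I\\-I&0\end{smallmatrix}\bigr)$ and the Pfaffian computation $c_1b_1-c_2b_2+c_3b_3$ replace the realizability criterion of \cite{DJZ2} (a closed restriction of a set lying in a $3$-manifold is realizable by a symplectic form iff the rank of its value at $0$ is at least $6-2n$); what your version buys is self-containedness, at the cost of redoing linear algebra that \cite{DJZ2} states once for all singularities.

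Two repairable glosses deserve mention. First, in the crux step you assert that coefficients of forms vanishing pointwise on $W_8$ lie in the ideal $I(W_8)$; in the smooth category this kind of Nullstellensatz statement needs justification. You do not need it: the only consequence you use is that $\partial_{x_k}g(0)=0$ for $k\le 3$ whenever $g$ vanishes on $W_8$, and this follows directly by differentiating $g(t^6,t^5,-t^4,0,\dots,0)\equiv 0$, since the coefficients of $t^4,t^5,t^6$ isolate the linear part of $g$ in $x_3,x_2,x_1$ (quadratic contributions start at $t^8$). Second, for $n=2$ your phrase ``the admissible completions are $\sum c_i\theta_i+\sum b_i\,dx_i\wedge dy$'' is not literally true: an arbitrary closed representative differs from $\sum c_i\theta_i$ by $\alpha+d\beta$ with $\alpha,\beta$ vanishing on $W_8$, not just by such special terms. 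But the same first-derivative lemma shows that the value at $0$ of \emph{any} representative equals $c_1\theta_1+c_2\theta_2+c_3\theta_3$ at $0$ plus terms of the form $\sum b_i\,dx_i\wedge dx_4$ (since $d\beta(0)$ can have nonzero entries only in the fourth row and column), so your Pfaffian criterion does rule out all representatives when $(c_1,c_2,c_3)=(0,0,0)$; this one-line justification should be stated. Finally, a cosmetic slip: for $i\le 3<j$ one has $dx_i\wedge dx_j=-d(x_j\,dx_i)$, i.e.\ the coordinate playing the role of the function must be the one vanishing on $W_8$.
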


\begin{thm}
\label{klasw8} $ \ $

\smallskip

\noindent (i) \ Any algebraic restriction in $[Z ^2(\mathbb R^{2n})]_{W_8}$ can be brought by a symmetry of $W_8$ to one of the normal forms $[W_8]^i$ given in the second column of Table \ref{tabw8}.

\smallskip

\noindent (ii)  \ The codimension in $[Z ^2(\mathbb R^{2n})]_{W_8}$ of the singularity class corresponding to the normal form $[W_8]^i$ is equal to $i$, the symplectic multiplicity and the index of isotropy are given in the fourth and fifth columns of Table  \ref{tabw8}.

\smallskip

\noindent (iii) \ The singularity classes corresponding to the normal forms are disjoint.

\smallskip

\noindent (iv) \ The parameters $c, c_1, c_2$ of the normal forms $[W_8]^i$ are moduli.

\end{thm}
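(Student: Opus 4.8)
The plan is to derive everything from Theorem~\ref{w8-baza}, which presents $[Z^2(\mathbb R^{2n})]_{W_8}$ as the graded $8$-dimensional space spanned by $\theta_1,\dots,\theta_8$ of quasi-degrees $9,10,11,13,14,15,17,19$, together with the elimination lemma Proposition~\ref{elimin1}. Writing a general algebraic restriction as $a=\sum_{i=1}^{8}c_i\theta_i$, the organizing principle is a \emph{lowest surviving term} stratification: the class of $a$ is governed first by the smallest index $s$ with $c_s\neq 0$, and then by the normalized values of the remaining coefficients after all admissible symmetries of $W_8$ have been applied.

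The engine of the reduction is the action of the quasi-homogeneous symmetries of $W_8$ on algebraic restrictions by Lie derivatives. First I would record the Euler field, giving $\mathcal L_E\theta_i=\delta_i\theta_i$ and hence the grading, and then determine the remaining low-degree vector fields tangent to $W_8$ (preserving its defining ideal). The crucial output is, for each candidate leading index $s$, a tangent field $X$ with $\mathcal L_X\theta_s=r\theta_k$ for some $k>s$ and $r\neq0$; Proposition~\ref{elimin1} then removes $\theta_k$ from the normal form whenever $c_s\neq0$. Collecting these relations into a table, in the spirit of Table~\ref{tabw81}, is the computational heart of the proof and is exactly what dictates which higher terms can be killed in each stratum.

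Granting these relations, part (i) is the case analysis on $s$. If $c_1\neq0$ I rescale to normalize $\theta_1$ and eliminate $\theta_4,\dots,\theta_8$, leaving $[W_8]^0=[\theta_1+c_1\theta_2+c_2\theta_3]$. If $c_1=0$ the leading term is $\theta_2$ or $\theta_3$, and the stratum splits according to which of $c_2,c_3$ survive: both nonzero gives $[W_8]^1$, only $c_2\neq0$ gives $[W_8]^{2a}$, only $c_3\neq0$ gives $[W_8]^{2b}$. The strata $c_1=\dots=c_{s-1}=0\neq c_s$ for $s=4,5,6,7$ produce $[W_8]^3,\dots,[W_8]^7$, and $a=0$ gives $[W_8]^8$ (which by Theorem~\ref{thm B} is the Lagrangian stratum). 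The residual $\pm$ signs attached to the even-quasidegree leading terms $\theta_2,\theta_5$ reflect that a real quasi-homogeneous rescaling normalizes such a coefficient only up to sign, whereas the odd-quasidegree leaders $\theta_3,\theta_4,\theta_6,\theta_7,\theta_8$ can be set to $+1$ outright.

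For part (ii) the codimension of $[W_8]^i$ equals the number of lowest-degree coefficients forced to vanish, which by construction is $i$; the symplectic multiplicity is then the codimension of the diffeomorphism orbit obtained from Proposition~\ref{sm} by computing the image of $X\mapsto\mathcal L_X a$ over tangent fields, and the index of isotropy is read off by Proposition~\ref{ii} as the order of vanishing at $0$ of the leading term ($0$ for $\theta_{1,2,3}$, $1$ for $\theta_{4,5,6}$, $2$ for $\theta_{7,8}$, and $\infty$ for $a=0$). Part (iii) follows because these discrete invariants separate every pair of classes except $(W_8)^{2a}$ and $(W_8)^{2b}$, which share codimension, symplectic multiplicity and index of isotropy; I separate these by the invariant kernel condition $\ker\omega|_W=\ell$ versus $\ker\omega|_W\neq\ell$. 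Part (iv) requires showing $c,c_1,c_2$ are genuine moduli: once the normalizing symmetries are exhausted, any further diffeomorphism preserving a normal form must fix its leading term and therefore act trivially on the surviving coefficients, so distinct parameter values yield non-diffeomorphic restrictions. The main obstacle throughout is the second step---extracting the precise tangent vector fields of $W_8$ and their Lie derivatives---since this is what forces the $2a/2b$ bifurcation, produces the $\pm$ ambiguities, and pins down exactly which moduli persist.
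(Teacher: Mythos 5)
Your parts (i) and (ii) follow essentially the same route as the paper: the grading by the Euler field, the tangent fields of the form $x^\alpha E$ acting through Lie derivatives (the paper's Table \ref{infini-w8}), elimination of higher terms via Proposition \ref{elimin1}, and normalization of the leading coefficient by quasi-homogeneous rescaling, with the parity of the quasi-degree ($\delta_2=10$, $\delta_5=14$ even; the rest odd) correctly explaining where the $\pm$ survives. The genuine gap is in part (iii). Your claim that codimension, symplectic multiplicity and index of isotropy separate every pair except $(W_8)^{2a}$, $(W_8)^{2b}$ is false: by Table \ref{tabw8}, the classes $(W_8)^4$ and $(W_8)^5$ share $\mu^{\rm sym}=6$ and ${\rm ind}=1$, and $(W_8)^6$, $(W_8)^7$ share $\mu^{\rm sym}=7$ and ${\rm ind}=2$. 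Codimension cannot fill this hole, since it is defined through membership in a singularity class and so cannot be evaluated on a given restriction without already knowing the classes are disjoint --- exactly what (iii) asserts. The paper separates these pairs by the Lagrangian tangency order ($L_N=10,11,13,15$ respectively; Theorem \ref{geom-cond-w8} and Table \ref{tabw8-lagr}), and separates $(W_8)^1$ from $(W_8)^{2a}$ by comparing tangent spaces to the orbits. Your own ``lowest surviving term'' principle would also work, but only if you prove it is invariant: any diffeomorphism preserving $W_8$ must preserve the parameterized curve $(t^6,t^5,-t^4,0,\dots)$, hence has the triangular linear part of (\ref{lindyfw8}) with diagonal $(A^6,A^5,A^4)$, so pullback raises the quasi-degree filtration and multiplies the leading term by $A^{\delta_s}$; since $\delta_1,\dots,\delta_8=9,10,11,13,14,15,17,19$ are pairwise distinct, the leading index is an invariant (and one still needs the invariance of the degree-$11$ coefficient, which holds because no tangent field of quasi-degree $1$ exists, to split $(W_8)^1$ from $(W_8)^{2a}$). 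You state none of this.

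The same omission affects your sign claims: the parity remark only shows rescalings cannot normalize the sign when $\delta_s$ is even, not that the two signs give inequivalent restrictions under the full symmetry group of $W_8$. The paper closes this in Lemma \ref{w8lem2a} with the curve-preservation argument above, concluding $A^{10}dx_1\wedge dx_3|_0=-dx_1\wedge dx_3|_0$ is impossible. Finally, in (iv) your assertion that a symmetry fixing the leading term ``must act trivially on the surviving coefficients'' is not a proof; the paper instead checks from Table \ref{infini-w8} that the tangent space to the orbit at, e.g., $[\theta_1+c_1\theta_2+c_2\theta_3]_{W_8}$ is spanned by $[9\theta_1+10c_1\theta_2+11c_2\theta_3]_{W_8}$ and $[\theta_4]_{W_8},\dots,[\theta_8]_{W_8}$, hence contains neither $[\theta_2]_{W_8}$ nor $[\theta_3]_{W_8}$, so $c_1,c_2$ are moduli --- a computation your $\mu^{\rm sym}$ calculation in (ii) already furnishes and which you should cite explicitly rather than the heuristic you give.
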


\renewcommand*{\arraystretch}{1.2}
\begin{center}
\begin{table}[h]

    \begin{small}
    \noindent
    \begin{tabular}{|p{3cm}|p{5.8cm}|c|c|c|}
                      \hline
    Symplectic class &   Normal forms for algebraic restrictions    & cod & $\mu ^{\rm sym}$ &  ind  \\ \hline
  $(W_8)^0$ \;\;  $(2n\ge 4)$ & $[W_8]^0: [\theta _1 + c_1\theta _2 + c_2\theta _3]_{W_8}$,\;
                          &  $0$ & $2$ & $0$  \\  \hline
  $(W_8)^1$ \;\; $(2n\ge 4)$ & $[W_8]^1: [c_1\theta _2 + \theta _3 + c_2\theta _4]_{W_8}, \ c_1\ne 0$
                                         &  $1$ & $3$ & $0$ \\ \hline
    $(W_8)^{2,a}$ \; $(2n\ge 4)$& $[W_8]^{2,a}: [\pm\theta _2 + c_1\theta_4+c_2\theta_7]_{W_8}$,
          & $2$ & $4$ & $0$     \\ \hline

    $(W_8)^{2,b}$ \; $(2n\ge 4)$& $[W_8]^{2,b}: [\theta _3 +c_1\theta_4+c_2\theta_5 ]_{W_8}$,
               & $2$ & $4$ & $0$     \\ \hline

    $(W_8)^3$ \;\; $(2n\ge 6)$ & $[W_8]^3: [\theta _4 + c_1\theta _5+c_2\theta _6]_{W_8}$
                                         &  $3$ & $5$ & $1$    \\ \hline
    $(W_8)^4$ \;\; $(2n\ge 6)$ & $[W_8]^4: [\pm\theta _5 + c_1\theta _6 + c_2 \theta_7]_{W_8}$
                                         &  $4$ & $6$ & $1$ \\ \hline
    $(W_8)^5$ \;\; $(2n\ge 6)$ & $[W_8]^5: [\theta _6 + c\theta _7]_{W_8}$
                                         &  $5$ & $6$ & $1$ \\ \hline
    $(W_8)^6$ \;\; $(2n\ge 6)$ & $[W_8]^6: [\theta _7+c\theta_8]_{W_8}$
                                         &  $6$ & $7$ & $2$   \\ \hline
    $(W_8)^7$ \;\; $(2n\ge 6)$ & $[W_8]^7: [\theta_8]_{W_8}$
                                         &  $7$ & $7$ & $2$   \\ \hline
    $(W_8)^8$ \;\; $(2n\ge 6)$ & $[W_8]^8: [0]_{W_8}$ &  $8$ & $8$ & $\infty $ \\ \hline
\end{tabular}

\smallskip

\caption{\small Classification of symplectic $W_8$ singularities:  \newline
$cod$ -- codimension of the classes; \ $\mu ^{sym}$-- symplectic multiplicity; \newline $ind$ -- the index of isotropy.}\label{tabw8}

\end{small}
\end{table}
\end{center}

\medskip

\noindent In the first column of Table \ref{tabw8}  by $(W_8)^i$ we denote a subclass of $(W_8)$ consisting of $N\in (W_8)$ such that the algebraic restriction $[\omega ]_N$ is diffeomorphic to some algebraic restriction of the normal form $[W_8]^i$, where $i$ is the codimension of the class. Classes $(W_8)^{2a}$ and $(W_8)^{2b}$ have the same codimension equal to $2$ but they can be distinguished geometrically (see Table \ref{tabw8-geom}).

The proof of Theorem \ref{klasw8} is presented in Section \ref{w8-proof}.

\subsubsection{Symplectic normal forms. Proof of Theorem \ref{w8-main}}
\label{w8-normal}

Let us transfer the normal forms $[W_8]^i$  to symplectic normal forms
. Fix a family $\omega ^i$ of symplectic forms on $\mathbb R^{2n}$ realizing the family $[W_8]^i$ of algebraic restrictions.  We can fix, for example

\smallskip
\begin{small}

\noindent $\omega ^0 = \theta _1 + c_1\theta _2 + c_2\theta _3 +
dx_1\wedge dx_4 + dx_5\wedge dx_6 + \cdots + dx_{2n-1}\wedge
dx_{2n};$

\smallskip

\noindent $\omega ^1 = c_1 \theta _2 + \theta _3 + c_2\theta _4  +
dx_3\wedge dx_4 + dx_5\wedge dx_6 + \cdots + dx_{2n-1}\wedge
dx_{2n}, \ \ c_1\ne 0; $

\smallskip

\noindent $\omega ^{2,a} = \pm\theta_2+ c_1 \theta _4 + c_2 \theta _7 + dx_2\wedge dx_4 +
dx_5\wedge dx_6 + \cdots + dx_{2n-1}\wedge dx_{2n};$

\smallskip

\noindent $\omega ^{2,b} = \theta_3+ c_1 \theta _4 + c_2 \theta _5 + dx_3\wedge dx_4 +
dx_5\wedge dx_6 + \cdots + dx_{2n-1}\wedge dx_{2n};$

\smallskip

\noindent $\omega ^3 = \theta _4 + c_1\theta _5 + c_2\theta _6+ dx_1\wedge dx_4 +
dx_2\wedge dx_5 + dx_3\wedge dx_6 + dx_7\wedge dx_8 + \cdots +
dx_{2n-1}\wedge dx_{2n};$

\smallskip

\noindent $\omega ^4 = \pm \theta _5 + c_1\theta _6 + c_2\theta _7+ dx_1\wedge dx_4 +dx_2\wedge dx_5 +
dx_3\wedge dx_6 + dx_7\wedge dx_8 + \cdots + dx_{2n-1}\wedge dx_{2n};$

\smallskip

\noindent $\omega ^5 = \theta _6 + c\theta _7+ dx_1\wedge dx_4 + dx_2\wedge dx_5 + dx_3\wedge dx_6 + dx_7\wedge dx_8 + \cdots + dx_{2n-1}\wedge
dx_{2n};$

\smallskip

\noindent $\omega ^6 = \theta _7 + c\theta _8+ dx_1\wedge dx_4 + dx_2\wedge dx_5 +
dx_3\wedge dx_6 + dx_7\wedge dx_8 + \cdots + dx_{2n-1}\wedge
dx_{2n};$

\smallskip

\noindent $\omega ^7 = \theta _8+ dx_1\wedge dx_4 + dx_2\wedge dx_5 +
dx_3\wedge dx_6 + dx_7\wedge dx_8 + \cdots + dx_{2n-1}\wedge
dx_{2n};$

\smallskip

\noindent $\omega ^8 = dx_1\wedge dx_4 + dx_2\wedge dx_5 +
dx_3\wedge dx_6 + dx_7\wedge dx_8 + \cdots + dx_{2n-1}\wedge
dx_{2n}.$
\end{small}

\medskip
 Let $\omega_0 = \sum_{i=1}^m dp_i \wedge dq_i$, where $(p_1,q_1,\cdots,p_n,q_n)$ is the coordinate system on $\mathbb R^{2n}, n\ge 3$ (resp. $n=2$). Fix, for $i=0,1,\cdots ,8$ (resp. for $i = 0,1,2)$ a family $\Phi ^i$ of local diffeomorphisms which bring the family of symplectic forms $\omega ^i$ to the symplectic form $\omega_0$: $(\Phi ^i)^*\omega ^i = \omega_0$. Consider the families $W_8^i = (\Phi ^i)^{-1}(W_8)$. Any stratified submanifold of the symplectic space $(\mathbb R^{2n},\omega_0)$ which is diffeomorphic to $W_8$ is symplectically equivalent to one and only one of the normal forms $W_8^i, i = 0,1,\cdots ,8$ (resp. $i= 0,1,2$) presented in Theorem \ref{w8-main}. By Theorem \ref{klasw8} we obtain that  parameters $c,c_1,c_2$ of the normal forms are moduli.

 \medskip

\subsubsection{Proof of Theorem \ref{klasw8}}
\label{w8-proof}

In our proof we use vector fields tangent to $N\in W_8$. Any vector fields tangent to $N\in W_8$ can be described as $V=g_1E+g_2\mathcal{H}$ where $E$ is the Euler vector field and $\mathcal{H}$ is a Hamiltonian vector field and $g_1, g_2$ are functions. It was shown in \cite{DT1} (Prop. 6.13) that the action of a Hamiltonian vector field on any 1-dimensional complete intersection is trivial.

\medskip

\noindent The germ of a vector field tangent to $W_8$ of non trivial action on algebraic restrictions of closed 2-forms to  $W_8$ may be described as a linear combination germs of vector fields:  $X_0\!=\!E, \  X_1\!=\!x_3E, \ X_2\!=x_2E, \ X_3\!=x_1E, \ X_4\!=x_3^2E, \ X_5\!=x_2x_3E$, \ $X_6\!=x_2^2E, \ X_7\!=x_1x_3E$, where $E$ is the Euler vector field \par\noindent $E= 6 x_1 \partial /\partial x_1+5 x_2 \partial /\partial x_2+4 x_3 \partial /\partial x_3$.

\begin{prop} \label{w8-infinitesimal}

The infinitesimal action of germs of quasi-homogeneous vector
fields tangent to $N\in (W_8)$ on the basis of the vector space of
algebraic restrictions of closed $2$-forms to $N$ is presented in
Table \ref{infini-w8}.

\renewcommand*{\arraystretch}{1.2}
\begin{small}
\begin{table}[h]
\begin{center}
\begin{tabular}{|l|r|r|r|r|r|r|r|r|}

 \hline

  $\mathcal L_{X_i} [\theta_j]$ & $[\theta_1]$   &   $[\theta_2]$ &   $[\theta_3]$ & $[\theta_4]$   & $[\theta_5]$   & $[\theta_6]$   & $[\theta_7]$ & $[\theta_8]$  \\ \hline

  $X_0\!=\!E$ & $9 [\theta_1]$ & $10 [\theta_2]$ & $11 [\theta_3]$ & $13 [\theta_4]$ & $14 [\theta_5]$ & $15 [\theta_6]$ & $17 [\theta_7]$  & $19 [\theta_8]$  \\ \hline

  $X_1\!=\!x_3E$  & $13[\theta_4]$ & $-28 [\theta_5]$ & $5[\theta_6]$  & $17[\theta_7]$  & $[0]$ & $-57[\theta_8]$  & $[0]$ & $[0]$  \\  \hline

  $X_2\!=\!x_2E$ & $14[\theta_5]$ & $ 10[\theta_6]$ & $[0]$ & $[0]$ & $19[\theta_8] $ & $[0]$ & $[0]$ & $[0]$  \\  \hline

  $X_3\!=\!x_1 E$ & $5 [\theta_6]$  & $[0]$ &$\frac{51}{2}[\theta_7]$   & $-19[\theta_8]$ & $[0]$ & $[0]$  & $[0]$  & $[0]$  \\  \hline

  $X_4\!=\!x_3^2E$ & $ 17[\theta_7]$ & $[0]$ & $-19[\theta_8]$ & $[0]$ & $[0]$ & $[0]$ & $[0]$ & $[0]$  \\ \hline

  $X_5\!=\!x_2x_3 E$ & $[0]$ & $-38[\theta_8]$  & $[0]$  & $[0]$ & $[0]$ & $[0]$  & $[0]$  & $[0]$  \\  \hline

  $X_6\!=\!x_2^2 E$ & $19[\theta_8]$ & $[0]$ & $[0]$ & $[0]$ & $[0]$ & $[0]$ & $[0]$ & $[0]$ \\ \hline

  $X_7\!=\!x_1x_3 E$ & $-19[\theta_8]$ & $[0]$  & $[0]$ & $[0]$ & $[0]$ & $[0]$ & $[0]$ &  $[0]$ \\ \hline

  \end{tabular}
\end{center}
\smallskip

\caption{\small Infinitesimal actions on algebraic restrictions of closed
2-forms to   $W_8$. $E=6x_1 \partial /\partial x_1+ 5x_2 \partial /\partial x_2+ 4x_3 \partial /\partial x_3$}\label{infini-w8}
\end{table}
\end{small}
\end{prop}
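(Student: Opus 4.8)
The plan is to exploit the graded structure of the space of algebraic restrictions together with Cartan's formula, so that the entire table collapses to a short finite computation. First I would record the two structural facts that do all the bookkeeping. Each field in the list has the form $X_i = g_i E$, where $E$ is the Euler field and $g_i$ is a quasi-homogeneous monomial of weight $w(g_i)\in\{0,4,5,6,8,9,10,10\}$ for $i=0,\dots,7$; since the Hamiltonian component of a tangent field acts trivially on algebraic restrictions of closed $2$-forms (\cite{DT1}), the fields $X_0,\dots,X_7$ account for the entire nontrivial action, and it suffices to compute $\mathcal{L}_{X_i}[\theta_j]$. Each $\theta_j$ is a closed quasi-homogeneous $2$-form of degree $\delta_j$ (Theorem \ref{w8-baza}), so by Cartan's formula $\mathcal{L}_{X_i}\theta_j = d(\iota_{X_i}\theta_j) = d(g_i\,\iota_E\theta_j)$, which is again closed and quasi-homogeneous, now of degree $\delta_j + w(g_i)$.

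Next I would use the degree grading to pin down each entry up to a scalar. By Proposition \ref{w8-all} the space $[\Lambda^2(\mathbb R^{2n})]_{W_8}$ is concentrated in the degrees $\{9,10,11,13,14,15,17,19\}$, and within the closed subspace $[Z^2(\mathbb R^{2n})]_{W_8}$ each of these degrees carries a one-dimensional graded piece spanned by the corresponding $\theta_k$ (Theorem \ref{w8-baza}). Since $\mathcal{L}_{X_i}\theta_j$ lies in $[Z^2(\mathbb R^{2n})]_{W_8}$ and is homogeneous of degree $\delta_j + w(g_i)$, it must equal a scalar multiple of the unique $\theta_k$ with $\delta_k = \delta_j + w(g_i)$, and it must vanish whenever $\delta_j + w(g_i)\notin\{9,10,11,13,14,15,17,19\}$. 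This single observation forces every ``$[0]$'' entry of Table \ref{infini-w8} (the target degree landing in one of the gaps $12,16,18,20,\dots$) and reduces the proposition to determining the finitely many scalars in the remaining cells.

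For those scalars I would proceed row by row. The row $X_0=E$ is immediate from the Euler identity $\mathcal{L}_E\theta_j = \delta_j\theta_j$, yielding the diagonal $9,10,\dots,19$. For each remaining nonzero cell I would compute $\iota_E\theta_j$, multiply by $g_i$, apply $d$, and then rewrite the resulting monomial $2$-form in terms of the chosen basis representative using the relations of Table \ref{tabw81} (equivalently, the defining relations (\ref{w81})--(\ref{w82})). For instance $\mathcal{L}_{X_1}\theta_2 = d(g_1\,\iota_E\theta_2)$ reduces to $14\,[x_3\,dx_1\wedge dx_3]_{W_8}$, and the degree-$14$ relation $[x_3\,dx_1\wedge dx_3]_{W_8} = -2[\theta_5]_{W_8}$ gives $-28\theta_5$; likewise $\mathcal{L}_{X_3}\theta_3 = 17\,[x_1\,dx_1\wedge dx_2]_{W_8}$ becomes $\tfrac{51}{2}\theta_7$ via the degree-$17$ relation $[x_1\,dx_1\wedge dx_2]_{W_8} = \tfrac32[\theta_7]_{W_8}$.

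The main obstacle is purely bookkeeping: the raw representative $d(g_i\,\iota_E\theta_j)$ is almost never one of the chosen $\theta_k$, so each nonzero entry requires invoking the degree-matched relation from Table \ref{tabw81} and tracking the exact rational coefficient (the values $-57$ and $\tfrac{51}{2}$ being the least forgiving). Because the grading argument has already reduced the problem to a fixed, small set of one-dimensional targets, this is a finite and mechanical verification with no further conceptual input; the only care needed is to substitute consistently when a relation chains through $[x_2^2+x_1x_3]_{W_8}=0$ or $[x_1^2+x_3^3]_{W_8}=0$, so that the final coefficients agree with Table \ref{infini-w8}.
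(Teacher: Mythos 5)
Your proposal is correct and follows essentially the route the paper intends: the paper offers no detailed proof of Proposition \ref{w8-infinitesimal} beyond the observation that tangent fields reduce to $g\cdot E$ (Hamiltonian parts acting trivially, via \cite{DT1}), leaving the table to direct computation of $\mathcal{L}_{g E}\theta_j$ followed by reduction through the relations of Table \ref{tabw81} --- exactly what you do, and your sample entries ($-28[\theta_5]$, $\tfrac{51}{2}[\theta_7]$) check out against the degree-$14$ and degree-$17$ relations. Your explicit use of the quasi-degree grading of $\bigl [ Z ^2(\mathbb R^{2n})\bigr ]_{W_8}$ to force all the zero entries at once is a clean organizational refinement of the same computation, not a different method.
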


\newpage

\medskip

Let $\mathcal{A}=[c_1 \theta_1+c_2 \theta_2+c_3 \theta_3+c_4 \theta_4+c_5 \theta_5+c_6 \theta_6 +c_7 \theta_7 +c_8 \theta_8]_{W_8}$
be the algebraic restriction of a symplectic form $\omega$.

The first statement of Theorem \ref{klasw8} follows from the following lemmas.

\begin{lem}
\label{w8lem0} If \;$c_1\ne 0$\; then the algebraic restriction
$\mathcal{A}=[\sum_{k=1}^8 c_k \theta_k]_{W_8}$
can be reduced by a symmetry of $W_8$ to an algebraic restriction $[\theta_1+\widetilde{c}_2 \theta_2+\widetilde{c}_3 \theta_3]_{W_8}$.
\end{lem}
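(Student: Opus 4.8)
The goal of Lemma~\ref{w8lem0} is to normalize the coefficient $c_1$ of $\theta_1$ to $1$ and simultaneously eliminate all the higher-degree terms $c_4\theta_4, \dots, c_8\theta_8$, leaving only $[\theta_1 + \widetilde c_2\theta_2 + \widetilde c_3\theta_3]_{W_8}$. The plan is to apply Proposition~\ref{elimin1} repeatedly, using the infinitesimal actions recorded in Table~\ref{infini-w8}. The basis $\theta_1, \dots, \theta_8$ is already quasi-homogeneous with strictly increasing quasi-degrees (except for the ties we must watch), so $\theta_1$ is the lowest-degree generator present when $c_1 \neq 0$; this is exactly the setting of Proposition~\ref{elimin1} with $a_s = \theta_1$.

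First I would read off from the $X_0 = E$ row that $\mathcal L_E\theta_1 = 9\theta_1$, so a quasi-homogeneous rescaling lets me assume $c_1 = 1$ (alternatively this follows since the parameters are projectivized). Then I would eliminate the unwanted higher terms in increasing order of quasi-degree, each time finding a tangent field $X_i$ whose action sends $\theta_1$ to a nonzero multiple of the target generator. Concretely: $\mathcal L_{X_1}\theta_1 = 13\theta_4$ kills $\theta_4$ (degree $13$); $\mathcal L_{X_2}\theta_1 = 14\theta_5$ kills $\theta_5$ (degree $14$); $\mathcal L_{X_3}\theta_1 = 5\theta_6$ kills $\theta_6$ (degree $15$); $\mathcal L_{X_4}\theta_1 = 17\theta_7$ kills $\theta_7$ (degree $17$); and $\mathcal L_{X_6}\theta_1 = 19\theta_8$ (or $\mathcal L_{X_5}\theta_2 = -38\theta_8$) kills $\theta_8$ (degree $19$). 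In each application of Proposition~\ref{elimin1} with $a_s = \theta_1$ and $a_k$ the generator being removed, the hypothesis $\mathcal L_{X}\theta_1 = r\,\theta_k$ with $r \neq 0$ is satisfied, so the coefficient $c_k$ can be absorbed, at the cost of possibly altering the coefficients of generators of degree strictly greater than $\delta_k$. The only survivors below $\theta_4$ are $\theta_2$ and $\theta_3$, which cannot be removed this way, yielding the claimed form.

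The point requiring care is the bookkeeping of \emph{side effects}: each elimination step is allowed to perturb the coefficients of all generators of quasi-degree exceeding $\delta_k$, so the coefficients $\widetilde c_2, \widetilde c_3$ of $\theta_2, \theta_3$ (degrees $10, 11$) are unaffected by the later steps (which target $\theta_4$ and up), while the higher coefficients $c_4, \dots, c_8$ get repeatedly rewritten. I must therefore order the eliminations by increasing quasi-degree and verify that no step reintroduces a term of degree $\le \delta_k$ that was already cleared; since $\theta_1$ has the lowest degree and Proposition~\ref{elimin1} only touches degrees above the one being eliminated, this monotone scheme terminates cleanly. The main obstacle is precisely confirming that the actions used do not interfere with $\theta_2$ and $\theta_3$ and that the chosen fields act nontrivially on $\theta_1$ at each required degree; both are immediate from the first column of Table~\ref{infini-w8}, where $\theta_1$ maps nontrivially onto $\theta_4, \theta_5, \theta_6, \theta_7, \theta_8$ but (below degree $13$) not onto $\theta_2$ or $\theta_3$, so those two coefficients survive untouched.
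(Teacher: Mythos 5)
Your proposal is correct and follows essentially the same route as the paper: the first column of Table \ref{infini-w8} together with Proposition \ref{elimin1} eliminates $\theta_4,\dots,\theta_8$ (leaving $c_2,c_3$ untouched, as you note), and a quasi-homogeneous scaling normalizes $c_1$ to $1$ --- the paper does this via the explicit diffeomorphism $\Psi:(x_1,x_2,x_3)\mapsto(c_1^{-6/9}x_1,c_1^{-5/9}x_2,c_1^{-4/9}x_3)$ \emph{after} the eliminations rather than before, an immaterial difference, with the odd quasi-degree $\delta_1=9$ explaining why no $\pm$ sign survives. One small caution: your parenthetical fallback $\mathcal L_{X_5}\theta_2=-38\theta_8$ is not directly admissible, since Proposition \ref{elimin1} requires the vector field to act on the lowest-degree term $a_s=\theta_1$ present in the restriction, but your primary choice $\mathcal L_{X_6}\theta_1=19\theta_8$ is exactly what the paper uses.
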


\begin{proof}
Using the data of Table \ref{infini-w8} we can see that for any algebraic restriction $[\theta_k]_{W_8}$, where
 $k\!\in\!\{4,5,\ldots,8\}$ we can find a vector field  $V_k$ tangent to $W_8$ such that $\mathcal L_{V_k}[\theta_1]_{W_8}\!=\![\theta_k]_{W_8}$. We deduce from Proposition \ref{elimin1} that the algebraic restriction  $\mathcal{A}$ is diffeomorphic to  $[c_1\theta_1+{c}_2 \theta_2+{c}_3 \theta_3]_{W_8}$.

By the condition $c_1\ne 0$ we have a diffeomorphism  $\Psi \in Symm(W_8)$ of the form
  \begin{equation}
\label{proofw8lem04}
\Psi:\,(x_1,x_2,x_3)\mapsto (c_1^{-\frac{6}{9}} x_1,c_1^{-\frac{5}{9}} x_2,c_1^{-\frac{4}{9}} x_3)
\end{equation}
and finally we obtain
\[ \Psi^*([c_1\theta_1+{c}_2 \theta_2+{c}_3 \theta_3]_{W_8})=[ \theta_1+c_2 c_1^{-\frac{10}{9}} \theta_2+c_3 c_1^{-\frac{11}{9}} \theta_3]_{W_8} =
 [ \theta_1+ \widetilde{c}_2 \theta_2+\widetilde{c}_3 \theta_3]_{W_8}.\]

\end{proof}

\begin{lem}
\label{w8lem1} If \;$c_1\!=\!0$\;and $c_2\cdot c_3\!\ne\!0$ then the algebraic restriction  $\mathcal{A}$ 
can be reduced by a symmetry of $W_8$ to an algebraic restriction $[\widetilde{c}_2 \theta_2+ \theta_3+\widetilde{c}_4 \theta_4]_{W_8}$.
\end{lem}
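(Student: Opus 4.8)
The plan is to follow exactly the pattern of Lemma \ref{w8lem0}, now working under the hypotheses $c_1=0$ and $c_2 c_3\neq 0$. First I would use Proposition \ref{elimin1} together with the infinitesimal action data in Table \ref{infini-w8} to eliminate the higher-degree terms. Reading the column for $[\theta_2]$, we see that $\mathcal L_{X_2}[\theta_2]_{W_8}=10[\theta_6]_{W_8}$, $\mathcal L_{X_3}[\theta_3]_{W_8}=\tfrac{51}{2}[\theta_7]_{W_8}$, and $\mathcal L_{X_5}[\theta_2]_{W_8}=-38[\theta_8]_{W_8}$. Since the leading term of $\mathcal A=[c_2\theta_2+c_3\theta_3+\cdots]_{W_8}$ is $c_2\theta_2$ with $c_2\neq 0$, Proposition \ref{elimin1} lets me successively kill the coefficients of $[\theta_5]_{W_8}$, $[\theta_6]_{W_8}$, $[\theta_7]_{W_8}$ and $[\theta_8]_{W_8}$, leaving an algebraic restriction diffeomorphic to $[c_2\theta_2+c_3\theta_3+c_4\theta_4]_{W_8}$.

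Next I would record which term must be kept. The term $[\theta_4]_{W_8}$ (quasi-degree $13$) cannot be removed by this procedure because there is no tangent quasi-homogeneous field $X$ with $\mathcal L_X[\theta_2]_{W_8}=r[\theta_4]_{W_8}$, $r\neq0$: inspecting the $[\theta_2]$ column of Table \ref{infini-w8} shows the only nonzero outputs land in $[\theta_2],[\theta_6],[\theta_8]$, never in $[\theta_4]$. Hence $[\theta_4]_{W_8}$ survives as a genuine parameter at this stage, and we arrive at $[c_2\theta_2+c_3\theta_3+c_4\theta_4]_{W_8}$ with $c_2c_3\neq0$.

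Finally I would normalize the two leading coefficients to the target form by a scaling symmetry exactly as in \eqref{proofw8lem04}. Under $\Psi:(x_1,x_2,x_3)\mapsto(\lambda^{6}x_1,\lambda^{5}x_2,\lambda^{4}x_3)$ the forms rescale by their quasi-degrees, so $\Psi^*[\theta_2]_{W_8}=\lambda^{10}[\theta_2]_{W_8}$, $\Psi^*[\theta_3]_{W_8}=\lambda^{11}[\theta_3]_{W_8}$, $\Psi^*[\theta_4]_{W_8}=\lambda^{13}[\theta_4]_{W_8}$. Choosing $\lambda$ so that $c_3\lambda^{11}=1$ sets the coefficient of $[\theta_3]_{W_8}$ to $1$, and the remaining coefficients become $\widetilde c_2=c_2\lambda^{10}$ and $\widetilde c_4=c_4\lambda^{13}$, yielding the desired normal form $[\widetilde c_2\theta_2+\theta_3+\widetilde c_4\theta_4]_{W_8}$.

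The step I expect to be most delicate is the bookkeeping in the first paragraph: Proposition \ref{elimin1} eliminates one target degree per application, so the terms must be cleared in the correct order of increasing quasi-degree, and at each step I must verify from Table \ref{infini-w8} that a suitable tangent field genuinely produces the term to be eliminated from the current leading term $[\theta_2]_{W_8}$ (and not merely from some already-eliminated term). Once the reduction to $[c_2\theta_2+c_3\theta_3+c_4\theta_4]_{W_8}$ is secured, the concluding scaling argument is routine and mirrors Lemma \ref{w8lem0} verbatim.
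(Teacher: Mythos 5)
Your reduction to $[c_2\theta_2+c_3\theta_3+c_4\theta_4]_{W_8}$ has a genuine gap at the elimination of $[\theta_7]_{W_8}$. Proposition \ref{elimin1} applies only when a tangent field $X$ satisfies $\mathcal L_X a_s=ra_k$ with $a_s$ the \emph{leading} term of the restriction; here $a_s=\theta_2$ (since $c_2\neq 0$ and $\delta_2=10$ is minimal, and this remains so throughout, as the eliminations never touch lower-degree coefficients). But the $[\theta_2]$ column of Table \ref{infini-w8} shows that $\mathcal L_{X_i}[\theta_2]_{W_8}$ lands only in $[\theta_2],[\theta_5],[\theta_6],[\theta_8]$ --- never in $[\theta_7]$. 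Your own final caveat states the correct requirement (the term must be produced ``from the current leading term $[\theta_2]_{W_8}$''), yet your first paragraph violates it by invoking $\mathcal L_{X_3}[\theta_3]_{W_8}=\frac{51}{2}[\theta_7]_{W_8}$, which acts on the subleading term $\theta_3$ and is outside the hypotheses of Proposition \ref{elimin1} as stated. The gap is not cosmetic: if $[\theta_7]$ could be killed using $c_2\neq 0$ alone, the same argument would run when $c_3=0$, contradicting Lemma \ref{w8lem2a}, where for $c_1=c_3=0$, $c_2\neq 0$ the coefficient of $\theta_7$ survives as a modulus in $[\pm\theta_2+\widetilde{c}_4\theta_4+\widetilde{c}_7\theta_7]_{W_8}$. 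The $\theta_7$ step is exactly where the hypothesis $c_3\neq 0$ must enter, and the proposition you cite cannot deliver it.

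The paper closes precisely this gap by abandoning the term-by-term use of Proposition \ref{elimin1} and running the Moser homotopy argument directly: it interpolates $\mathcal B_t$ between $\mathcal A$ and $[c_2\theta_2+c_3\theta_3+c_4\theta_4]_{W_8}$, seeks $V_t=\sum_{k=1}^5 b_k(t)X_k$ with $\mathcal L_{V_t}\mathcal B_t=[c_5\theta_5+c_6\theta_6+c_7\theta_7+c_8\theta_8]_{W_8}$, and solves the linear system (\ref{proofw8lem13}); its third equation, $17c_4b_1+\frac{51}{2}c_3b_3=c_7$, is solvable precisely because $c_3\neq 0$, i.e.\ $[\theta_7]$ is generated by $X_3$ acting on the $\theta_3$-component of $\mathcal B_t$. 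Your proposal could be repaired the same way, or by first proving a strengthened version of Proposition \ref{elimin1} permitting the lowering field to act on a surviving subleading term. The rest of your argument is sound and agrees with the paper: the eliminations of $[\theta_5],[\theta_6],[\theta_8]$ are legitimately obtained from $\mathcal L_{X_1}[\theta_2]$, $\mathcal L_{X_2}[\theta_2]$, $\mathcal L_{X_5}[\theta_2]$, the observation that $[\theta_4]$ cannot be removed is correct, and your final scaling with $c_3\lambda^{11}=1$ is exactly the paper's diffeomorphism (\ref{proofw8lem14}), well defined for either sign of $c_3$ since the exponent $11$ is odd.
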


\begin{proof}[Proof of Lemma \ref{w8lem1}]

We use the homotopy method to prove that  $\mathcal{A}$ is diffeomorphic to $[\widetilde{c}_2 \theta_2+ \theta_3+\widetilde{c}_4 \theta_4]_{W_8}$.

Let $\mathcal{B}_t=[c_2 \theta_2+c_4 \theta_3+c_4 \theta_4+(1-t)c_5 \theta_5+(1-t)c_6 \theta_6+(1-t)c_7 \theta_7 +(1-t)c_8 \theta_8]_{W_8}$
\; for $t \in[0;1]$. Then $\mathcal{B}_0=\mathcal{A}$\; and \;$\mathcal{B}_1=[c_2 \theta_2+c_3 \theta_3+c_4 \theta_4]_{W_8}$.
 We prove that there exists a family $\Phi_t \in Symm(W_8),\;t\in [0;1]$ such that
 \begin{equation}
\label{proofw8lem11}   \Phi_t^*\mathcal{B}_t=\mathcal{B}_0,\;\Phi_0=id.
\end{equation}
Let $V_t$ be a vector field defined by $\frac{d \Phi_t}{dt}=V_t(\Phi_t)$. Then differentiating (\ref{proofw8lem11}) we obtain
 \begin{equation}
\label{proofw8lem12}   \mathcal L_{V_t} \mathcal{B}_t=[c_5 \theta_5+c_6 \theta_6+c_7 \theta_7 +c_8 \theta_8]_{W_8}.
\end{equation}
We are looking for $V_t$ in the form $V_t=\sum_{k=1}^5 b_k(t) X_k$   where the $b_k(t)$ for $k=1,\ldots,5$ are smooth functions $b_k:[0;1]\rightarrow \mathbb{R}$. Then by Proposition  \ref{w8-infinitesimal} equation (\ref{proofw8lem12}) has a form
\begin{equation}  \label{proofw8lem13}
\left[ \begin{array}{ccccc}
-28c_2 & 0 & 0 & 0 & 0 \\
5c_3 & 10c_2 & 0 & 0 & 0 \\
17c_4 & 0 & \frac{51}{2}c_3 & 0 & 0 \\
-57c_6(1-t) & 19c_5(1-t) & -19c_4 & -19c_3 &-38c_2
\end{array} \right]
\left[ \begin{array}{c} b_1 \\ b_2 \\ b_3 \\ b_4 \\ b_5  \end{array} \right] =
\left[ \begin{array}{c} c_5 \\ c_6 \\ c_7 \\ c_8  \end{array}  \right]
\end{equation}
\noindent If \;$c_2\cdot c_3\ne 0$ we can solve (\ref{proofw8lem13}) and $\Phi_t$ may be obtained as a flow of the vector field $V_t$.
The family $\Phi_t$ preserves $W_8$, because $V_t$ is tangent to $W_8$ and $\Phi_t^*\mathcal{B}_t=\mathcal{A}$.
Using the homotopy arguments we have  $\mathcal{A}$  diffeomorphic to $\mathcal{B}_1=[c_2 \theta_2+c_3 \theta_3+c_4 \theta_4]_{W_8}$.
By the condition $c_3\ne 0$ we have a diffeomorphism $\Psi \in Symm(W_8)$ of the form
  \begin{equation}
\label{proofw8lem14}
\Psi:\,(x_1,x_2,x_3)\mapsto (c_3^{-\frac{6}{11}} x_1,c_3^{-\frac{5}{11}} x_2,c_3^{-\frac{4}{11}} x_3),
\end{equation}
and we obtain \ 
\[ \Psi^*(\mathcal{B}_1)=[c_2 c_3^{-\frac{10}{11}} \theta_2+ \theta_3+c_4 c_3^{-\frac{13}{11}} \theta_3]_{W_8} =
 [ \widetilde{c}_2 \theta_2+ \theta_3+\widetilde{c}_4 \theta_4]_{W_8}.\]
\end{proof}

\begin{lem}
\label{w8lem2a} If $c_1\!=\!c_3\!=\!0$ and $c_2\!\neq\!0$ then the algebraic restriction $\mathcal{A}$
can be reduced by a symmetry of \, $W_8$ to an algebraic restriction $[\pm\theta_2+\widetilde{c}_4 \theta_4+\widetilde{c}_7 \theta_7]_{W_8}$. 
\end{lem}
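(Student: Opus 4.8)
The plan is to follow the same two-stage strategy that worked in Lemmas \ref{w8lem0} and \ref{w8lem1}: first use the homotopy (Moser) method together with the infinitesimal action data in Table \ref{infini-w8} to eliminate the higher-degree terms, and then normalize the surviving leading coefficient by a quasi-homogeneous diagonal symmetry. Under the hypothesis $c_1=c_3=0$ and $c_2\neq 0$, the algebraic restriction is $\mathcal{A}=[c_2\theta_2+c_4\theta_4+c_5\theta_5+c_6\theta_6+c_7\theta_7+c_8\theta_8]_{W_8}$, so the lowest-degree surviving form is $\theta_2$ (degree $10$). Inspecting the $[\theta_2]$ column of Table \ref{infini-w8}, I read off $\mathcal L_{X_2}[\theta_2]=10[\theta_6]$, $\mathcal L_{X_1}[\theta_2]=-28[\theta_5]$, and $\mathcal L_{X_5}[\theta_2]=-38[\theta_8]$, while $\mathcal L_{X_3}[\theta_2]=\mathcal L_{X_4}[\theta_2]=0$. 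So acting on $[\theta_2]$ directly reaches $[\theta_5],[\theta_6],[\theta_8]$ but not $[\theta_4]$ or $[\theta_7]$; this is exactly why the target normal form retains $\theta_4$ and $\theta_7$ as moduli.

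First I would set up the homotopy family $\mathcal{B}_t=[c_2\theta_2+c_4\theta_4+(1-t)c_5\theta_5+c_6'\theta_6+(1-t)c_7\theta_7+(1-t)c_8\theta_8]_{W_8}$, chosen so that $\mathcal{B}_0=\mathcal{A}$ and $\mathcal{B}_1=[c_2\theta_2+\widetilde{c}_4\theta_4+\widetilde{c}_7\theta_7]_{W_8}$ kills precisely the $\theta_5,\theta_6,\theta_8$ components, leaving $\theta_2,\theta_4,\theta_7$. Seeking a tangent field $V_t=\sum b_k(t)X_k$ with $\mathcal L_{V_t}\mathcal{B}_t$ equal to the difference $[c_5\theta_5+c_6\theta_6+c_8\theta_8]_{W_8}$ of consecutive family members, I would assemble, via Table \ref{infini-w8}, the linear system in the unknowns $b_k(t)$ governing the $[\theta_5],[\theta_6],[\theta_8]$ coordinates. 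The key point is that $c_2\neq 0$ already drives $[\theta_2]\mapsto[\theta_6]$ and $[\theta_2]\mapsto[\theta_8]$ with nonzero coefficients ($10c_2$ and $-38c_2$), and one checks the $[\theta_5]$ equation is solvable as well (e.g. using $\mathcal L_{X_1}[\theta_2]=-28[\theta_5]$), so the system can be solved for the $b_k(t)$ as smooth functions of $t$. Integrating $V_t$ to a flow $\Phi_t\in Symm(W_8)$ then yields, exactly as in (\ref{proofw8lem11})--(\ref{proofw8lem13}), that $\mathcal{A}$ is diffeomorphic to $\mathcal{B}_1=[c_2\theta_2+\widetilde{c}_4\theta_4+\widetilde{c}_7\theta_7]_{W_8}$.

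Finally, since $c_2\neq 0$, I would apply a diagonal quasi-homogeneous symmetry of the form $\Psi:(x_1,x_2,x_3)\mapsto(\abs{c_2}^{-6/10}x_1,\abs{c_2}^{-5/10}x_2,\abs{c_2}^{-4/10}x_3)$, mirroring (\ref{proofw8lem04}) and (\ref{proofw8lem14}). Because $\theta_2$ has degree $10$, this scales its coefficient to $\pm 1$ (the sign of $c_2$ cannot be removed, which is the source of the $\pm$ in the statement), while rescaling the remaining coefficients $\widetilde{c}_4,\widetilde{c}_7$ into new moduli. The main obstacle, as in the previous lemma, is verifying that the relevant linear system for the $b_k(t)$ is consistently solvable for all $t\in[0;1]$ under the single nondegeneracy hypothesis $c_2\neq 0$; here it is slightly more delicate than in Lemma \ref{w8lem1} because fewer generators act nontrivially on $[\theta_2]$, so one must confirm that the three target components $[\theta_5],[\theta_6],[\theta_8]$ are genuinely reachable and that no obstruction forces a $\theta_4$ or $\theta_7$ term back into the normalization — which is precisely what keeps them as moduli rather than eliminable coefficients.
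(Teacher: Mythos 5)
Your proposal is correct and follows essentially the same route as the paper: eliminate the $[\theta_5],[\theta_6],[\theta_8]$ components using the tangent fields acting on $[\theta_2]$ (the paper invokes Proposition \ref{elimin1}, whose proof is exactly your Moser homotopy, so your explicit system is equivalent), then apply the same diagonal rescaling with exponents $\frac{6}{10},\frac{5}{10},\frac{4}{10}$ to normalize $c_2$ to $\pm 1$. Two minor remarks: the $(1-t)$ factors in your family $\mathcal{B}_t$ are misplaced (they should multiply $c_5$, $c_6$ and $c_8$, not $c_7$), and the paper's proof additionally establishes that the two signs $\pm\theta_2$ are non-diffeomorphic via the forced linear part (\ref{lindyfw8}) of any symmetry preserving the curve $(t^6,t^5,-t^4,0,\ldots,0)$ --- you assert this but do not prove it, though it is not required for the literal reduction claimed in the statement.
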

 \begin{proof}
 We can see from Table \ref{infini-w8} that  for any algebraic restriction  $[\theta_k]_{W_8}$, where $k\in \{5,6,8\}$ there exists a vector field $V_k$ tangent to $W_8$ such that $\mathcal L_{V_k}[\theta_2]_{W_8}=[\theta_k]_{W_8}$. Using Proposition  \ref{elimin1} we obtain that $\mathcal{A}$ is diffeomorphic to  $ [c_2\theta_2+{c}_4 \theta_4+\widehat{c}_7 \theta_7]_{W_8}$ for some $\widehat{c}_7\in\mathbb{R}$.

By the condition $c_2\ne 0$ we can use a diffeomorphism $\Psi \in Symm(W_8)$ of the form
  \begin{equation}
\label{proofw8lem2a4}
\Psi:\,(x_1,x_2,x_3)\mapsto (|c_2|^{-\frac{6}{10}} x_1,|c_2|^{-\frac{5}{10}} x_2,|c_2|^{-\frac{4}{10}} x_3)
\end{equation}
and we obtain
\[\Psi^*([c_2\theta_2+c_4 \theta_4+\widehat{c}_7\theta_7]_{W_8})\!=\![\frac{c_2}{|c_2|}\theta_2+c_4 |c_2|^{-\frac{13}{10}} \theta_4+\widehat{c}_7 |c_2|^{-\frac{17}{10}} \theta_7]_{W_8}\!=\! [\pm\theta_2+\widetilde{c}_4 \theta_4+\widetilde{c}_7\theta_7]_{W_8}.\]

The algebraic restrictions $[ \theta_2+ \widetilde{c}_4 \theta_4+\widetilde{c}_7 \theta_7]_{W_8}$ and $[-\theta_2+ \widetilde{b}_4 \theta_4+\widetilde{b}_7 \theta_7]_{W_8}$ are not diffeomorphic.
Any diffeomorphism $\Phi=(\Phi_1,\ldots,\Phi_{2n})$ of $(\mathbb{R}^{2n},0)$ preserving $W_8$ has to preserve a curve $C(t)=(t^6,t^5,-t^4,0,\ldots,0)$ which means that

$\Phi_1(t^6,t^5,-t^4,0,\ldots,0)=(\psi(t))^6$,\par $\Phi_2(t^6,t^5,-t^4,0,\ldots,0)=(\psi(t))^5$,\par
$\Phi_3(t^6,t^5,-t^4,0,\ldots,0)=-(\psi(t))^4$,\par
$\Phi_k(t^6,t^5,-t^4,0,\ldots,0)=0$ for $k>3$,\par\noindent
 where $\psi(t)=a_1t+a_2t^2+a_3t^3+\ldots$ \; is a diffeomorphism of $(\mathbb{R},0)$. \par Hence $\Phi$ has a linear part
 \setlength{\arraycolsep}{1mm}
 \renewcommand*{\arraystretch}{0.9}
\begin{equation}
\label{lindyfw8}
\begin{array}{cccccccccccccccc}
\Phi_1: & A^6x_1& &+ & & &&A_{14}x_4&+& \cdots &+&A_{1,2n}x_{2n}\\
\Phi_2: & A_{2,1}x_1 &+& A^5x_2& &+& &A_{24}x_4&+& \cdots &+&A_{2,2n}x_{2n} \\
\Phi_3: & A_{3,1}x_1&+ &A_{3,2}x_2 &+ & A^4x_3&+&A_{34}x_4&+& \cdots &+&A_{3,2n}x_{2n}\\
\Phi_4: & & & & & & &A_{44}x_4&+& \cdots &+&A_{4,2n}x_{2n}\\
\vdots & & & & & & & \vdots & \vdots & \vdots & \vdots\\
\Phi_{2n}: & & & & & & &A_{2n,4}x_4&+& \cdots &+&A_{2n,2n}x_{2n},\\
\end{array}
\end{equation}
where $A, A_{i,j}\in \mathbb R$.\par\noindent
If we assume that $\Phi^*([ \theta_2+ \widetilde{c}_4 \theta_4+\widetilde{c}_7 \theta_7]_{W_8})= [-\theta_2+ \widetilde{b}_4 \theta_4+\widetilde{b}_7 \theta_7]_{W_8}$, then \par\noindent
 $A^{10}dx_1\wedge dx_3|_0=-dx_1\wedge dx_3|_0$, which is a contradiction.
\end{proof}

\begin{lem}
\label{w8lem2b} If $c_1\!=\!c_2\!=\!0$ and $c_3\!\neq\!0$ then the algebraic restriction $\mathcal{A}$
can be reduced by a symmetry of \, $W_8$ to an algebraic restriction $[\theta_3+\widetilde{c}_4 \theta_4+\widetilde{c}_5 \theta_5]_{W_8}$.
\end{lem}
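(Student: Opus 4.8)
The plan is to follow exactly the two-step pattern already established in Lemmas \ref{w8lem1} and \ref{w8lem2a}: first use the infinitesimal data of Table \ref{infini-w8} together with Proposition \ref{elimin1} (or a direct homotopy argument) to eliminate all coefficients $c_6, c_7, c_8$, reducing $\mathcal{A}$ to the three-parameter family $[c_3\theta_3 + c_4\theta_4 + c_5\theta_5]_{W_8}$, and then use the rescaling diffeomorphism coming from $c_3 \neq 0$ to normalize the leading coefficient to $1$. The starting point is that under the hypotheses $c_1 = c_2 = 0$, $c_3 \neq 0$, the algebraic restriction is $\mathcal{A} = [c_3\theta_3 + c_4\theta_4 + c_5\theta_5 + c_6\theta_6 + c_7\theta_7 + c_8\theta_8]_{W_8}$.

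First I would read off from the column $[\theta_3]$ of Table \ref{infini-w8} which higher-degree forms $[\theta_6], [\theta_7], [\theta_8]$ are hit by Lie derivatives of $[\theta_3]$. We have $\mathcal L_{X_1}[\theta_3]_{W_8} = 5[\theta_6]_{W_8}$, $\mathcal L_{X_3}[\theta_3]_{W_8} = \frac{51}{2}[\theta_7]_{W_8}$, and $\mathcal L_{X_4}[\theta_3]_{W_8} = -19[\theta_8]_{W_8}$. Since $c_3 \neq 0$ and all three target coefficients are nonzero, Proposition \ref{elimin1} applies successively: starting from the lowest degree that can be eliminated, each tangent vector field kills one of $c_6, c_7, c_8$ while (at worst) modifying coefficients of strictly higher quasi-degree, which are then dealt with in turn. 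This yields that $\mathcal{A}$ is diffeomorphic to $[c_3\theta_3 + \widehat{c}_4\theta_4 + \widehat{c}_5\theta_5]_{W_8}$ for some $\widehat{c}_4, \widehat{c}_5 \in \mathbb{R}$; note that $[\theta_4]$ and $[\theta_5]$ are \emph{not} in the image of $\mathcal L_\bullet[\theta_3]$, so they survive as the moduli.

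Second, using $c_3 \neq 0$ I would apply the scaling symmetry $\Psi:(x_1,x_2,x_3)\mapsto (c_3^{-\frac{6}{11}}x_1, c_3^{-\frac{5}{11}}x_2, c_3^{-\frac{4}{11}}x_3)$, exactly as in (\ref{proofw8lem14}), which is a symmetry of $W_8$ by quasi-homogeneity with weights $(6,5,4)$. Since $\theta_3 = dx_1\wedge dx_2$ has quasi-degree $11$, this normalizes its coefficient to $1$, while rescaling the coefficients of $\theta_4$ (degree $13$) and $\theta_5$ (degree $14$) by positive powers of $c_3$; renaming these gives the desired form $[\theta_3 + \widetilde{c}_4\theta_4 + \widetilde{c}_5\theta_5]_{W_8}$.

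I expect the main obstacle to be purely bookkeeping rather than conceptual: verifying that the successive eliminations in the first step do not reintroduce a nonzero $c_6$ or $c_7$ after one has been cleared, i.e.\ checking that the order in which Proposition \ref{elimin1} is applied (lowest quasi-degree first, $\theta_6$ then $\theta_7$ then $\theta_8$) is consistent with the triangular structure visible in Table \ref{infini-w8}. Because each $X_i$ raises quasi-degree and the targets $[\theta_6], [\theta_7], [\theta_8]$ have strictly increasing degrees $15, 17, 19$, the eliminations decouple and no circularity arises; the homotopy-method variant used in Lemma \ref{w8lem1} could replace Proposition \ref{elimin1} if one prefers an explicit flow, but here the direct application of Proposition \ref{elimin1} is cleaner since $[\theta_3]$ alone generates all the forms to be eliminated.
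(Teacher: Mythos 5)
Your proposal is correct and follows essentially the same route as the paper, which omits this proof with the remark that Lemmas \ref{w8lem2b}--\ref{w8lem7} are proved like Lemmas \ref{w8lem0}--\ref{w8lem2a}: eliminate $\theta_6,\theta_7,\theta_8$ via Proposition \ref{elimin1} using $\mathcal L_{X_1}[\theta_3]=5[\theta_6]$, $\mathcal L_{X_3}[\theta_3]=\frac{51}{2}[\theta_7]$, $\mathcal L_{X_4}[\theta_3]=-19[\theta_8]$ from Table \ref{infini-w8}, and then apply the rescaling (\ref{proofw8lem14}). Your table readings, the triangularity argument justifying the order of eliminations, and the normalization of $c_3$ to $1$ (valid for either sign of $c_3$ since $\delta_3=11$ is odd, so the fractional powers are real) are all correct.
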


\begin{lem}
\label{w8lem3} If \;$c_1=c_2=c_3=0$ and $c_4\ne 0$\; then the algebraic restriction $\mathcal{A}$
can be reduced by a symmetry of \, $W_8$ to an algebraic restriction   $[\theta_4+\widetilde{c}_5 \theta_5+\widetilde{c}_6 \theta_6]_{W_8}$.
\end{lem}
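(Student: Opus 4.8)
The plan is to follow the same two-stage strategy used in the proofs of Lemmas \ref{w8lem0}--\ref{w8lem2a}: first apply Proposition \ref{elimin1} to kill the two highest-degree terms $\theta_7$ and $\theta_8$ of $\mathcal{A}$, exploiting that $\theta_4$ (of quasi-degree $13$) is the lowest-degree surviving generator, and then apply a quasi-homogeneous scaling symmetry of $W_8$ to normalise the coefficient of $\theta_4$. Since $c_1=c_2=c_3=0$ and $c_4\ne 0$, we start from $\mathcal{A}=[c_4\theta_4+c_5\theta_5+c_6\theta_6+c_7\theta_7+c_8\theta_8]_{W_8}$, so in the notation of Proposition \ref{elimin1} we take $s=4$ and $a_s=\theta_4$ with $c_4\ne 0$.

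Reading off the $[\theta_4]$-column of Table \ref{infini-w8}, the only generators reachable from $\theta_4$ under the infinitesimal action are $\theta_7$ and $\theta_8$, via $\mathcal L_{X_1}[\theta_4]=17[\theta_7]$ and $\mathcal L_{X_3}[\theta_4]=-19[\theta_8]$ (with $X_1=x_3E$, $X_3=x_1E$); crucially $\theta_5$ and $\theta_6$ cannot be produced from $\theta_4$, so they will survive as moduli. First I would apply Proposition \ref{elimin1} with $X_1$ and $k=7$ to remove the $\theta_7$-term, which keeps $\theta_4,\theta_5,\theta_6$ with their coefficients and may alter only the $\theta_8$-coefficient. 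I would then apply Proposition \ref{elimin1} with $X_3$ and $k=8$ to remove the remaining $\theta_8$-term; since $\theta_8$ is now the top generator, this reintroduces nothing. The one point requiring care is the order: eliminating $\theta_8$ \emph{before} $\theta_7$ would re-create a $\theta_8$-term when $\theta_7$ is subsequently killed, so $\theta_7$ must be eliminated first and $\theta_8$ last. After these two steps $\mathcal{A}$ is diffeomorphic, by a symmetry of $W_8$, to $[c_4\theta_4+c_5'\theta_5+c_6'\theta_6]_{W_8}$ for suitable $c_5',c_6'$.

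Finally, exactly as in (\ref{proofw8lem04}) and (\ref{proofw8lem14}), I would normalise $c_4$ using the quasi-homogeneous symmetry
\[
\Psi:\,(x_1,x_2,x_3)\mapsto \bigl(c_4^{-\frac{6}{13}}x_1,\,c_4^{-\frac{5}{13}}x_2,\,c_4^{-\frac{4}{13}}x_3\bigr),
\]
which preserves $W_8$ because both defining equations are quasi-homogeneous. As $\theta_4,\theta_5,\theta_6$ have quasi-degrees $13,14,15$, the pullback $\Psi^*$ multiplies them by $c_4^{-1}$, $c_4^{-14/13}$, $c_4^{-15/13}$ respectively, sending $[c_4\theta_4+c_5'\theta_5+c_6'\theta_6]_{W_8}$ to $[\theta_4+\widetilde{c}_5\theta_5+\widetilde{c}_6\theta_6]_{W_8}$. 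Since the denominator $13$ is odd, the real roots $c_4^{-k/13}$ are well defined for either sign of $c_4$, so the coefficient of $\theta_4$ becomes exactly $+1$ with no $\pm$ ambiguity, matching the stated normal form. I expect no conceptual obstacle here: the work is entirely bookkeeping, namely verifying the invoked entries of Table \ref{infini-w8} and respecting the elimination order, just as in the earlier lemmas.
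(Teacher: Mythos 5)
Your proof is correct and follows essentially the same route as the paper, which states that Lemma \ref{w8lem3} is proved like Lemmas \ref{w8lem0}--\ref{w8lem2a}: repeated use of Proposition \ref{elimin1} with the entries $\mathcal L_{X_1}[\theta_4]=17[\theta_7]$ and $\mathcal L_{X_3}[\theta_4]=-19[\theta_8]$ of Table \ref{infini-w8} to remove the higher-degree terms, followed by the quasi-homogeneous scaling symmetry to normalise $c_4$. Your additional observations — the elimination order ($\theta_7$ before $\theta_8$) and the fact that the odd quasi-degree $13$ of $\theta_4$ makes $c_4^{-1/13}$ real for either sign of $c_4$, so no $\pm$ appears — are exactly the bookkeeping implicit in the paper's normal form $[\theta_4+c_1\theta_5+c_2\theta_6]_{W_8}$.
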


\begin{lem}
\label{w8lem4} If \;$c_1=0,\ldots,c_4=0$ and $c_5\ne 0$,\; then the algebraic restriction $\mathcal{A}$
can be reduced by a symmetry of \, $W_8$ to an algebraic restriction $[\pm\theta_5+\widetilde{c}_6 \theta_6+\widetilde{c}_7 \theta_7]_{W_8}$.
\end{lem}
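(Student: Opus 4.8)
The plan is to mimic the two-step pattern of the preceding lemmas: first eliminate the top term $\theta_8$ by means of Proposition \ref{elimin1}, and then rescale to normalise the leading coefficient to $\pm 1$. Under the hypotheses $c_1=\cdots=c_4=0$ and $c_5\ne 0$, the algebraic restriction to be reduced is $\mathcal{A}=[c_5\theta_5+c_6\theta_6+c_7\theta_7+c_8\theta_8]_{W_8}$, whose lowest-degree term present is $[\theta_5]_{W_8}$ (of quasi-degree $\delta_5=14$).

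For the first step I would read off from Table \ref{infini-w8} the relation $\mathcal{L}_{X_2}[\theta_5]_{W_8}=19[\theta_8]_{W_8}$, where $X_2=x_2E$ is a tangent quasi-homogeneous field over $W_8$. Applying Proposition \ref{elimin1} with $a_s=[\theta_5]_{W_8}$ and $a_k=[\theta_8]_{W_8}$ (so that $k=8>s=5$ and $r=19\ne 0$) eliminates the $\theta_8$ component. Since $\theta_8$ is the highest basis element of $[Z^2(\mathbb R^{2n})]_{W_8}$ (here $p=8$), no higher-degree terms can be reintroduced, and $\mathcal{A}$ becomes diffeomorphic to $[c_5\theta_5+c_6\theta_6+c_7\theta_7]_{W_8}$ with the \emph{same} $c_6,c_7$.

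For the second step I would use the scaling symmetry $\Psi\in Symm(W_8)$ given by $\Psi:(x_1,x_2,x_3)\mapsto(|c_5|^{-6/14}x_1,|c_5|^{-5/14}x_2,|c_5|^{-4/14}x_3)$, which is admissible because $W_8$ is quasi-homogeneous with weights $(6,5,4)$. This $\Psi$ multiplies a quasi-homogeneous $2$-form of weighted degree $\delta$ by $|c_5|^{-\delta/14}$, so using $\delta_5=14,\ \delta_6=15,\ \delta_7=17$ one obtains $\Psi^*[c_5\theta_5+c_6\theta_6+c_7\theta_7]_{W_8}=[\pm\theta_5+\widetilde{c}_6\theta_6+\widetilde{c}_7\theta_7]_{W_8}$, with $\widetilde{c}_6=|c_5|^{-15/14}c_6$, $\widetilde{c}_7=|c_5|^{-17/14}c_7$ and leading sign $\mathrm{sgn}(c_5)$.

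The only point requiring justification beyond routine computation is that $c_6$ and $c_7$ genuinely survive as moduli, i.e. that no further reduction is available. I would verify this by inspecting the $[\theta_5]$-column of Table \ref{infini-w8}: the sole nonzero image of $[\theta_5]_{W_8}$ under a tangent field is $[\theta_8]_{W_8}$, so there is no tangent field $X$ with $\mathcal{L}_X[\theta_5]_{W_8}$ a nonzero multiple of $[\theta_6]_{W_8}$ or $[\theta_7]_{W_8}$, and Proposition \ref{elimin1} cannot be invoked to kill either term. The sign $\pm$ is likewise forced rather than a defect of the method: $\delta_5=14$ is even, so every admissible rescaling acts on $[\theta_5]_{W_8}$ by a positive factor $\lambda^{14}$ and cannot reverse the sign of $c_5$ (the coordinate reflections preserving $W_8$, e.g. $x_2\mapsto-x_2$, also fix $\theta_5$). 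I expect no serious obstacle here, as both steps are mild specialisations of arguments already carried out in Lemmas \ref{w8lem2a}--\ref{w8lem3}.
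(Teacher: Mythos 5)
Your proof is correct and is essentially the paper's own argument: the paper declares that Lemmas \ref{w8lem2b}--\ref{w8lem7} are proved like Lemmas \ref{w8lem0}--\ref{w8lem2a}, which is precisely your two steps --- eliminating $\theta_8$ via Proposition \ref{elimin1} using the entry $\mathcal{L}_{X_2}[\theta_5]_{W_8}=19[\theta_8]_{W_8}$ of Table \ref{infini-w8} (with $k=p=8$, so $c_6,c_7$ are untouched), then normalizing $c_5$ to $\pm 1$ by the quasi-homogeneous scaling with weights $(6,5,4)$ and $t=|c_5|^{-1/14}$. One small caveat: your sign-invariance remark only inspects scalings and reflections, whereas the paper's template (proof of Lemma \ref{w8lem2a}) handles arbitrary symmetries by showing any diffeomorphism preserving $W_8$ must preserve the curve $C(t)=(t^6,t^5,-t^4,0,\ldots,0)$ and hence has linear part of the form (\ref{lindyfw8}), so the $\theta_5$-coefficient transforms by $A^{14}>0$ --- but this concerns the disjointness statement (iii) of Theorem \ref{klasw8}, not the reduction asserted in the lemma itself, so your proof of the stated lemma stands.
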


\begin{lem}
\label{w8lem5} If \;$c_1=0,\ldots,c_5=0$ and $c_6\ne 0$\; then the algebraic restriction $\mathcal{A}$
can be reduced by a symmetry of \, $W_8$ to an algebraic restriction $[\theta_6+\widetilde{c}_7 \theta_7]_{W_8}$.
\end{lem}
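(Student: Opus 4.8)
The plan is to follow verbatim the two-step strategy of the preceding lemmas: first eliminate the top-degree term by an infinitesimal homotopy argument via Proposition \ref{elimin1}, then rescale by a symmetry of $W_8$ to normalize the leading coefficient. Here, since $c_1=\cdots=c_5=0$ and $c_6\ne 0$, the algebraic restriction to be reduced is $\mathcal{A}=[c_6\theta_6+c_7\theta_7+c_8\theta_8]_{W_8}$, and the target $[\theta_6+\widetilde{c}_7\theta_7]_{W_8}$ differs from it only by the removal of $\theta_8$ and the scaling $c_6\mapsto 1$.

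First I would eliminate $\theta_8$. Reading off Table \ref{infini-w8}, the germ $X_1=x_3E$ tangent to $W_8$ satisfies $\mathcal L_{X_1}[\theta_6]_{W_8}=-57[\theta_8]_{W_8}$. Applying Proposition \ref{elimin1} with leading index $s=6$ (note $\delta_6=15<17=\delta_7$, as required), with $k=8$ and $r=-57\ne 0$, shows that $\mathcal{A}$ is diffeomorphic to $[c_6\theta_6+c_7\theta_7]_{W_8}$. Because $k=8$ is the top index of the basis, no new terms are reintroduced and the reduction is clean. Second, I would normalize $c_6$: since $W_8$ is quasi-homogeneous with weights $(6,5,4)$, the rescaling $\Psi:(x_1,x_2,x_3)\mapsto (c_6^{-6/15}x_1,\,c_6^{-5/15}x_2,\,c_6^{-4/15}x_3)$ lies in $Symm(W_8)$, and since each $\theta_j$ is quasi-homogeneous of degree $\delta_j$ one has $\Psi^*\theta_j=c_6^{-\delta_j/15}\theta_j$. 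With $\delta_6=15$ and $\delta_7=17$ this yields $\Psi^*([c_6\theta_6+c_7\theta_7]_{W_8})=[\theta_6+\widetilde{c}_7\theta_7]_{W_8}$ with $\widetilde{c}_7=c_7c_6^{-17/15}$, as asserted.

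There is essentially no serious obstacle here, this being the most degenerate of the nonzero cases, but two small points deserve attention. First, $c_6$ is normalized to exactly $+1$ rather than $\pm 1$ because $15$ is odd, so $c_6^{-1/15}$ is a well-defined real number even when $c_6<0$; this is why the normal form $[W_8]^5$ carries $+\theta_6$. Second, $\theta_7$ cannot be removed: the smallest weight among $x_1,x_2,x_3$ is $4$, so there is no tangent quasi-homogeneous vector field of quasi-degree $\delta_7-\delta_6=2$ carrying $[\theta_6]_{W_8}$ to $[\theta_7]_{W_8}$, and consequently $\widetilde{c}_7$ persists as the modulus $c$ recorded in the statement of $[W_8]^5$.
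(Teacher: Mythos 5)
Your proof is correct and takes essentially the same route the paper indicates for this lemma: it eliminates $[\theta_8]_{W_8}$ via Proposition \ref{elimin1} using $\mathcal L_{X_1}[\theta_6]_{W_8}=-57[\theta_8]_{W_8}$ from Table \ref{infini-w8} (with $s=6$, $k=8$, so no new terms appear), and then normalizes $c_6$ by the quasi-homogeneous rescaling $\Psi\in Symm(W_8)$, exactly as in the proofs of Lemmas \ref{w8lem0}--\ref{w8lem2a} to which the paper appeals. Your side remarks are also accurate: since $\delta_6=15$ is odd the leading coefficient becomes exactly $+1$ (consistent with the normal form $[W_8]^5$ carrying no $\pm$), and $\theta_7$ indeed survives as the modulus $c$.
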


\begin{lem}
\label{w8lem6} If \;$c_1=0,\ldots,c_6=0$ and $c_7\ne 0$\; then the algebraic restriction $\mathcal{A}$
can be reduced by a symmetry of \, $W_8$ to an algebraic restriction $[\theta_7+\widetilde{c}_8 \theta_8]_{W_8}$.
\end{lem}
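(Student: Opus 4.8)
The plan is to treat this as the top stratum of the filtration. Since $c_1=\cdots=c_6=0$ and $c_7\ne0$, the algebraic restriction is simply $\mathcal{A}=[c_7\theta_7+c_8\theta_8]_{W_8}$, and by Theorem \ref{w8-baza} the form $\theta_8$ has the highest quasi-degree ($\delta_8=19$) in the basis. In the $[\theta_7]$-column of Table \ref{infini-w8} no entry is a nonzero multiple of $[\theta_8]$ (only $E$ acts nontrivially, giving $17[\theta_7]$), so Proposition \ref{elimin1} cannot remove $\theta_8$, which therefore survives as a modulus. The only remaining task is to rescale the leading coefficient $c_7$ to $1$.

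First I would invoke quasi-homogeneity of $W_8$ (weights $w(x_1)=6,\,w(x_2)=5,\,w(x_3)=4$) to produce the rescaling
\begin{equation}
\Psi:\,(x_1,x_2,x_3)\mapsto\bigl(c_7^{-6/17}x_1,\,c_7^{-5/17}x_2,\,c_7^{-4/17}x_3\bigr),
\end{equation}
extended by the identity on $x_{\ge4}$. I would check that $\Psi^*(x_1^2+x_3^3)=c_7^{-12/17}(x_1^2+x_3^3)$ and $\Psi^*(x_2^2+x_1x_3)=c_7^{-10/17}(x_2^2+x_1x_3)$, so that $\Psi$ scales both defining equations by nonzero factors and hence $\Psi\in Symm(W_8)$. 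Substituting into $\theta_7=x_3^2\,dx_2\wedge dx_3$ and $\theta_8=x_2^2\,dx_2\wedge dx_3$ would give $\Psi^*\theta_7=c_7^{-1}\theta_7$ and $\Psi^*\theta_8=c_7^{-19/17}\theta_8$, whence
\begin{equation}
\Psi^*\mathcal{A}=[\theta_7+c_7^{-19/17}c_8\,\theta_8]_{W_8}=[\theta_7+\widetilde{c}_8\theta_8]_{W_8},
\end{equation}
with $\widetilde{c}_8=c_7^{-19/17}c_8$, as claimed.

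Being at the top of the filtration, this case carries no real obstacle; the only point requiring care is the meaning of the fractional powers. Because $\delta_7=17$ is odd, the real root $c_7^{-1/17}$ exists for every $c_7\ne0$, so $c_7^{-6/17}$ and $c_7^{-4/17}$ are positive while $c_7^{-5/17}$ inherits the sign of $c_7$; in all cases the leading coefficient becomes $c_7^{-1}c_7=1$. This odd degree is precisely why the normal form normalises to $+\theta_7$ rather than $\pm\theta_7$, in contrast with the even-degree leading terms in Lemmas \ref{w8lem2a} and \ref{w8lem4}.
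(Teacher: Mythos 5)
Your proof is correct and takes essentially the same route the paper intends: for Lemmas \ref{w8lem2b}--\ref{w8lem7} the paper only says the arguments are ``similar'' to those of Lemmas \ref{w8lem0}--\ref{w8lem2a}, and your argument instantiates exactly that scheme --- Table \ref{infini-w8} shows no tangent field sends $[\theta_7]_{W_8}$ to a nonzero multiple of $[\theta_8]_{W_8}$ (so Proposition \ref{elimin1} eliminates nothing and $\widetilde{c}_8$ survives), and the quasi-homogeneous rescaling $\Psi$, of the same type as (\ref{proofw8lem04}), (\ref{proofw8lem14}) and (\ref{proofw8lem2a4}), normalizes the leading coefficient. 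Your computations ($\Psi^*\theta_7=c_7^{-1}\theta_7$, $\Psi^*\theta_8=c_7^{-19/17}\theta_8$) check out, and your remark that the odd quasi-degree $\delta_7=17$ is what yields $+\theta_7$ rather than $\pm\theta_7$ is consistent with the normal form $[W_8]^6$ in Table \ref{tabw8}.
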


\begin{lem}
\label{w8lem7} If \;$c_1=0,\ldots,c_7=0$ and $c_8\ne 0$\; then the algebraic restriction $\mathcal{A}$
can be reduced by a symmetry of \, $W_8$ to an algebraic restriction  $[\theta_8]_{W_8}$.
\end{lem}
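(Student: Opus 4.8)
The plan is to observe that the hypotheses collapse $\mathcal{A}$ to a single term and then normalize its coefficient by a weighted dilation. Since $c_1=\cdots=c_7=0$ and $c_8\ne 0$, we simply have $\mathcal{A}=[c_8\theta_8]_{W_8}$, so there is nothing to eliminate via Proposition \ref{elimin1} or the homotopy method used in the previous lemmas: the whole task is to rescale the one nonzero coefficient to $1$.

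First I would recall that $W_8$ is quasi-homogeneous with weights $w(x_1)=6$, $w(x_2)=5$, $w(x_3)=4$, so for every $\lambda\ne 0$ the diagonal linear map
\begin{equation}
\label{scalew8lem7}
\Psi:\,(x_1,x_2,x_3)\mapsto(\lambda^{6}x_1,\lambda^{5}x_2,\lambda^{4}x_3)
\end{equation}
preserves $W_8$, and hence $\Psi\in Symm(W_8)$; indeed the two defining equations $x_1^2+x_3^3$ and $x_2^2+x_1x_3$ are quasi-homogeneous of degrees $12$ and $10$, so under \eqref{scalew8lem7} each is multiplied by a nonzero scalar and the zero set is unchanged. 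The $2$-form $\theta_8=x_2^2\,dx_2\wedge dx_3$ has quasi-degree $\delta_8=19$ (as recorded in Theorem \ref{w8-baza}), so a direct computation gives $\Psi^*\theta_8=\lambda^{19}\theta_8$, and therefore, since $\Psi$ preserves $W_8$, $\Psi^*([c_8\theta_8]_{W_8})=[c_8\lambda^{19}\theta_8]_{W_8}$.

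Finally I would choose $\lambda$ with $c_8\lambda^{19}=1$, that is $\lambda=c_8^{-1/19}$ using the real $19$th root, which yields $\Psi^*(\mathcal{A})=[\theta_8]_{W_8}$ and completes the reduction. The only point that needs a word of care — and the closest thing to an obstacle in an otherwise routine argument — is the parity of the degree: because $19$ is odd, the real $19$th root of $1/c_8$ exists and is unique for \emph{every} nonzero $c_8$, whether positive or negative, so the coefficient can be scaled all the way to $+1$ with no residual sign. This is precisely why the normal form is $[\theta_8]_{W_8}$ rather than $[\pm\theta_8]_{W_8}$, in contrast to the even-degree normalizations (such as that of $\theta_2$ in Lemma \ref{w8lem2a}), where the dilation can adjust the coefficient only up to sign and the $\pm$ must be carried along.
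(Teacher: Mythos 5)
Your proof is correct and takes essentially the same approach as the paper: the paper omits details for Lemma \ref{w8lem7}, saying only that it is proved like Lemmas \ref{w8lem0}--\ref{w8lem2a} using Table \ref{infini-w8}, and in this case the elimination step is vacuous, so the whole proof reduces to your weighted dilation $(x_1,x_2,x_3)\mapsto(\lambda^{6}x_1,\lambda^{5}x_2,\lambda^{4}x_3)$ with $c_8\lambda^{19}=1$. Your observation that the odd quasi-degree $\delta_8=19$ allows normalization to $+1$ with no residual sign, in contrast to the even-degree cases such as Lemma \ref{w8lem2a}, is exactly why the paper's normal form is $[\theta_8]_{W_8}$ rather than $[\pm\theta_8]_{W_8}$.
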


The proofs of Lemmas \ref{w8lem2b} -- \ref{w8lem7} are similar to the proofs of Lemmas \ref{w8lem0} -- \ref{w8lem2a} and are based on Table \ref{infini-w8}.

\medskip

Statement $(ii)$ of Theorem \ref{klasw8} follows from the conditions
in the proof of part $(i)$ (codimension) and  from Theorem \ref{thm B} and Proposition \ref{sm} (symplectic multiplicity) and Proposition \ref{ii} (index of isotropy).

\medskip

To prove statement $(iii)$ of Theorem \ref{klasw8} we have to show that singularity classes corresponding to normal forms are disjoint. The singularity classes that can be distinguished by geometric conditions obviously are disjoint. From Theorem \ref{geom-cond-w8} we see that only classes $(W_8)^1$ and $(W_8)^{2,a}$ can not be distinguished by the geometric conditions.
 To prove that these classes are disjoint we compare  the tangent spaces to the orbits of the respective algebraic restrictions. From Table \ref{infini-w8} we see that the tangent space to the orbit of $[c_1\theta _2 + \theta _3 + c_2\theta _4]_{W_8}$ at $[c_1\theta _2 + \theta _3 + c_2\theta _4]_{W_8}$ is spanned by the linearly independent algebraic restrictions $[10c_2\theta _2 + 11\theta _3 + 13c_2\theta _4]_{W_8}$, $[\theta _5]_{W_8}$, $[\theta _6]_{W_8}$, $[\theta _7]_{W_8}$, $[\theta _8]_{W_8}$ and  the tangent space to the orbit of $[\pm\theta _2 + b_1\theta _4 + b_2\theta _7]_{W_8}$ at $[\pm\theta _2 + b_1\theta _4 + b_2\theta _7]_{W_8}$ is spanned by the linearly independent algebraic restrictions $[\pm 10\theta _2 + 13b_1\theta _4 + 17b_2\theta _7]_{W_8}$, $[\pm 28\theta _5+17b_1\theta_7]_{W_8}$, $[\theta _6]_{W_8}$ and $[\theta _8]_{W_8}$.



\medskip

To prove statement $(iv)$ of Theorem \ref{klasw8} we have to show that the parameters $c, c_1, c_2$ are moduli in the normal forms. The proofs are very similar in all cases. We consider as an example
the normal form with two parameters $[\theta_1+c_1\theta_2+c_2\theta_3]_{W_8}$. From Table \ref{infini-w8} we see that the tangent space to the orbit
of $[\theta_1+c_1\theta_2+c_2\theta_3]_{W_8}$ at $[\theta_1+c_1\theta_2+c_2\theta_3]_{W_8}$ is spanned by the linearly independent algebraic restrictions
$[9\theta_1+10c_1\theta_2+11c_2\theta_3]_{W_8}$, $[\theta_4]_{W_8},[\theta_5]_{W_8}, [\theta_6]_{W_8}, [\theta_7]_{W_8}, [\theta_8]_{W_8}.$ Hence the algebraic restrictions
$[\theta_2]_{W_8}$ and $[\theta_3]_{W_8}$ do not belong to it. Therefore the parameters $c_1$ and $c_2$ are independent moduli in the normal form
$[\theta_1+c_1\theta_2+c_2\theta_3]_{W_8}$.

\bigskip


\subsection{Proofs for  $W_9$ singularity}
\subsubsection{Algebraic restrictions to $W_9$ and their classification}\label{w9-class} $ \ $


One has the following relations for $(W_9)$-singularities
\begin{equation}
[d(x_1^2+x_2x_3^2)]_{W_9}=[2x_1dx_1+ 2x_2x_3dx_3+x_3^2 dx_2]_{W_9}=0,
\label{w91}
\end{equation}
\begin{equation}
[d(x_2^2+x_1x_3)]_{W_9}=[2x_2dx_2+x_3dx_1+x_1dx_3]_{W_9}=0.
\label{w92}
\end{equation}
Multiplying these relations by suitable $1$-forms we obtain the relations in Table \ref{tabw91}.
 \setlength{\tabcolsep}{1mm}
\renewcommand*{\arraystretch}{1.15}
\begin{footnotesize}
\begin{table}[h]
\begin{center}
\begin{tabular}{|c|c|p{3.7cm}|}
\hline
       $\delta$ & Relations & Proof\\ \hline    

$11$ & $[x_3dx_1 \wedge dx_3]_N=-2[x_2dx_2 \wedge dx_3]_N$
                & (\ref{w92})$\wedge\, dx_3$ \\ \hline

$12$ &  $[x_1dx_2 \wedge dx_3]_N=[x_3dx_1 \wedge dx_2]_N$
             & (\ref{w92})$\wedge\, dx_2$  \\ \hline

$13$ &  $[x_3^2dx_2\wedge dx_3]_N=-2[x_1dx_1 \wedge dx_3]_N=4[x_2dx_1 \wedge dx_1]_N$ & (\ref{w91})$\wedge dx_3$ and (\ref{w92})$\wedge dx_1$ \\ \hline

$14$ &  $[x_3^2dx_1\!\wedge dx_3]_N\!=-2[x_1dx_1\!\wedge dx_2]_N\!=\!-2[x_2x_3dx_2\!\wedge dx_3]_N$
             & (\ref{w91})$\wedge dx_2$, \  (\ref{w92})$\wedge x_3dx_3$ \\ \hline

$15$ &  $[\alpha]_N=0$ for all 2-forms $\alpha$ of quasi-degree $15$ & relations for $\delta\in\{11,12\}$ \newline and (\ref{w91})$\wedge dx_1$ \newline and $[x_2^2+x_1x_3]_N=0$\\   \hline

   $16$ &    $[x_3^3dx_2\!\wedge dx_3]_N\!=\!-2[x_1x_3dx_1\!\wedge dx_3]_N\!=4[x_2x_3dx_1\!\wedge dx_2]_N$
                & relations for $\delta=13$  \\
                & $[x_1x_3dx_1\!\wedge dx_3]_N\!=\!-2[x_1x_2dx_2\!\wedge dx_3]_N\!=\!-[x_2^2dx_1\!\wedge dx_3]_N$ & relations for $\delta\in\{11,12\}$ \newline and $[x_2^2+x_1x_3]_N=0$ \\ \hline

   $17$ &   $[\alpha]_N=0$ for all 2-forms $\alpha$ of quasi-degree $17$
                & relations for $\delta\in\{12,13,14\}$ \newline and $[x_1^2+x_2x_3^2]_N=0$
                                    \newline i $[x_2^2+x_1x_3]_N=0$\\   \hline

   $18$ &   $[\alpha]_N=0$ for all 2-forms $\alpha$ of quasi-degree $18$
                & relations for $\delta\in\{13,14,15\}$ \newline and $[x_1^2+x_2x_3^2]_N=0$\\   \hline

  $19$ &   $[\alpha]_N=0$ for all 2-forms $\alpha$ of quasi-degree $19$
                & relations for $\delta\in\{14,15,16\}$ \newline and $[x_1^2+x_2x_3^2]_N=0$\\   \hline

   $20$ &   $[\alpha]_N=0$ for all 2-forms $\alpha$ of quasi-degree $20$
                &  relations for $\delta\in\{15,16,17\}$ \\   \hline

   $21$ &   $[\alpha]_N=0$ for all 2-forms $\alpha$ of quasi-degree $21$
                &  relations for $\delta\in\{16,17,18\}$ \\   \hline

   $>\!21$ &   $[\alpha]_N=0$ for all 2-forms $\alpha$ of quasi-degree  $\delta>21$
                &  relations for $\delta> 16$ \\   \hline

\end{tabular}
\end{center}
\smallskip
\caption{\small Relations towards calculating $[\Lambda^2(\mathbb R^{2n})]_N$ for $N=W_9$}\label{tabw91}
\end{table}
\end{footnotesize}
\smallskip

 Using the method of algebraic restrictions and Table \ref{tabw91} we obtain the following proposition:

\begin{prop}
\label{w9-all}
The space $[\Lambda ^{2}(\mathbb R^{2n})]_{W_9}$ is a $10$-dimensional vector space spanned by the algebraic restrictions to $W_9$ of the $2$-forms

$\theta _1= dx_2\wedge dx_3, \;\; \theta _2=dx_1\wedge dx_3,\;\; \theta_3 = dx_1\wedge dx_2,$


  $\theta _4 = x_3dx_2\wedge dx_3,\;\; \theta _5 = x_3dx_1\wedge dx_3,$\;\; $\sigma_1 = x_1 dx_2\wedge dx_3,$ \ \ $\sigma_2 = x_2 dx_1\wedge dx_3,$


$\theta _7= x_3^2 dx_2\wedge dx_3$,\;\; $\theta _8= x_3^2 dx_1\wedge dx_3$, \;\; $\theta _9= x_3^3 dx_2\wedge dx_3$.
\end{prop}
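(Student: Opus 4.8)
The plan is to reproduce, for $N=W_9$, the computation that yields its $W_8$-analogue Proposition \ref{w8-all}, organizing everything by the grading by quasi-degree. First I would observe that, since $x_4,\dots,x_{2n}$ vanish identically on $N$, every monomial $2$-form carrying a factor $x_k$ or a differential $dx_k$ with $k\ge 4$ lies in $\mathcal A^2_0(N,\mathbb R^{2n})$: directly $x_k\,dx_i\wedge dx_j$ vanishes on $N$, while $g\,dx_k\wedge dx_j = d(g\,x_k\,dx_j)-x_k\,dg\wedge dx_j$ exhibits it as $d(\text{$1$-form vanishing on }N)$ plus a form vanishing on $N$. Hence $[\Lambda^2(\mathbb R^{2n})]_{W_9}$ is spanned by the algebraic restrictions of the monomials $x^\alpha\,dx_i\wedge dx_j$ with $i<j\in\{1,2,3\}$ and $\alpha$ a multi-index in $(x_1,x_2,x_3)$. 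Because $N$ is quasi-homogeneous with weights $(5,4,3)$ and $\mathcal A^2_0$ is generated by quasi-homogeneous elements, the whole problem splits into finitely many finite-dimensional pieces indexed by the quasi-degree $\delta$, and I would treat each $\delta$ in turn.

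Next I would generate the relations. The defining functions vanish on $N$, so $[x_1^2+x_2x_3^2]_{W_9}=[x_2^2+x_1x_3]_{W_9}=0$, and their differentials have zero algebraic restriction, giving the two fundamental $1$-form relations (\ref{w91}) and (\ref{w92}). Wedging these with the forms $dx_k$ and with monomials $x^\beta\,dx_k$, and using the two scalar relations to trade $x_1^2$ for $-x_2x_3^2$ and $x_2^2$ for $-x_1x_3$, produces exactly the relations recorded in Table \ref{tabw91}; for instance (\ref{w92})$\wedge\,dx_3$ is the degree-$11$ relation, while (\ref{w91})$\wedge\,dx_3$ combined with (\ref{w92})$\wedge\,dx_1$ gives the degree-$13$ relation.

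Then I would carry out the degree-by-degree reduction to obtain the spanning set. In each quasi-degree I enumerate the finitely many monomial $2$-forms in $x_1,x_2,x_3$ and apply the relations of that degree and all lower ones. Table \ref{tabw91} shows that for $\delta\in\{15,17,18,19,20,21\}$ and for every $\delta>21$ all $2$-forms have zero algebraic restriction, which already yields finite-dimensionality; the surviving degrees $7,8,9,10,11,12,13,14,16$ contribute precisely $\theta_1,\dots,\theta_5,\sigma_1,\sigma_2,\theta_7,\theta_8,\theta_9$, the only degree carrying two independent generators being $\delta=12$, where the relation $[x_1\,dx_2\wedge dx_3]_{W_9}=[x_3\,dx_1\wedge dx_2]_{W_9}$ collapses the three degree-$12$ monomials to the classes $[\sigma_1]_{W_9}$ and $[\sigma_2]_{W_9}$. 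This establishes the upper bound: the space is spanned by the ten listed forms.

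Finally I would prove the matching lower bound, i.e. linear independence. Since $\mathcal A^2_0$ is graded by quasi-degree, it suffices to test independence inside each fixed $\delta$; as every degree except $\delta=12$ carries a single generator, I only need that each generator is nontrivial modulo $\mathcal A^2_0$ and that $[\sigma_1]_{W_9},[\sigma_2]_{W_9}$ are independent. Nontriviality can be detected either by restricting a representative to a parametrization of the two branches of $N$, or by pairing with quasi-homogeneous tangent vector fields as in the infinitesimal-action computation used later for the classification. I expect the real obstacle to be exactly this lower bound: the upper-bound bookkeeping is routine once Table \ref{tabw91} is in place, but showing that no further relation hides among the ten forms — that none of them, nor the combinations $c_1\sigma_1+c_2\sigma_2$, can be written as $\alpha+d\beta$ with $\alpha,\beta$ vanishing on $N$ — requires producing objects that genuinely separate them, since the algebraic restriction lives in a quotient where spanning is far easier to verify than independence.
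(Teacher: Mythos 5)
Your spanning (upper-bound) argument coincides with the paper's: the paper's proof of this proposition consists precisely of the relations of Table \ref{tabw91}, obtained by wedging (\ref{w91}) and (\ref{w92}) with monomial $1$-forms, followed by the degree-by-degree bookkeeping you describe, after the preliminary reduction killing everything involving $x_k$ or $dx_k$ with $k\ge 4$. Your identification of the surviving degrees $7,8,9,10,11,12,13,14,16$, with $\delta=12$ the only degree carrying two classes, is exactly right.

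The gap is in the lower bound, and both tools you propose for it would fail. Restricting a representative $2$-form to a parameterization of the branches detects nothing: the pullback of any $2$-form to a $1$-dimensional parameter space is identically zero, which is exactly why the algebraic restriction, rather than the pullback, is the invariant under study. The infinitesimal-action computation cannot serve either: the entries of Table \ref{infini-w9} are computed in the quotient $[\Lambda^2(\mathbb R^{2n})]_{W_9}$, so they presuppose the basis asserted by the proposition, and in any case $\mathcal L_X$ can only transfer nontriviality from one class to another, never establish a base case. What actually closes the argument --- implicit in the paper's appeal to ``the method of algebraic restrictions'' and carried out in \cite{DJZ2} and \cite{D} --- is the explicit determination, in each quasi-degree $\delta\le 16$, of the quasi-homogeneous part of $\mathcal A^2_0(W_9,\mathbb R^{2n})$: a form vanishes pointwise on $N$ if and only if all its coefficients vanish on both branches, namely the $x_3$-axis and $t\mapsto(t^5,-t^4,-t^3)$, and in these low degrees the spaces of such coefficients are small enough to enumerate completely. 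For instance, in quasi-degree $12$ the only $1$-forms vanishing on $N$ (modulo terms involving $x_{\ge 4}$ or $dx_{\ge 4}$) are multiples of $(x_2^2+x_1x_3)\,dx_2$, whose differential $x_3\,dx_1\wedge dx_2 - x_1\,dx_2\wedge dx_3$ yields precisely the degree-$12$ relation of Table \ref{tabw91} and nothing more, while every degree-$12$ $2$-form vanishing on $N$ is zero modulo $x_{\ge 4}$; hence no combination $c_1\sigma_1+c_2\sigma_2$ with $(c_1,c_2)\ne(0,0)$ lies in $\mathcal A^2_0$, and the single-generator degrees are handled the same way. Alternatively, for the closed subspace you could invoke the dimension result quoted in the introduction from \cite{DJZ2}, which gives $\dim [Z^2(\mathbb R^{2n})]_{W_9}=9$, and then treat separately only the one non-closed direction, using that $d$ descends to algebraic restrictions and $[d(\sigma_1-\sigma_2)]_{W_9}=2[dx_1\wedge dx_2\wedge dx_3]_{W_9}\ne 0$. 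Without some such computation your proposal establishes only that the ten listed restrictions span, not that the space is $10$-dimensional.
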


Proposition \ref{w9-all} and results of Section \ref{method}  imply the following description of the space $[Z ^2(\mathbb R^{2n})]_{W_9}$ and the manifold $[{\rm Symp} (\mathbb R^{2n})]_{W_9}$.

\begin{thm} \label{w9-baza}
 The space $[Z^2(\mathbb R^{2n})]_{W_9}$ is a $9$-dimensional vector space
 spanned by the algebraic restrictions to $W_9$  of the quasi-homogeneous $2$-forms $\theta_i$  of degree $\delta_i$

\smallskip
\begin{small}

$\theta _1= dx_2\wedge dx_3,\;\;\;\delta_1=7,$

\smallskip

$\theta _2=dx_1\wedge dx_3,\;\;\;\delta_2=8,$

\smallskip

$\theta_3 = dx_1\wedge dx_2,\;\;\;\delta_3=9,$

\smallskip

  $\theta _4 = x_3dx_2\wedge dx_3,\;\;\;\delta_4=10,$

  \smallskip

  $\theta _5 = x_3dx_1\wedge dx_3,\;\;\;\delta_5=11,$

\smallskip

$\theta _6=\sigma_1+\sigma_2=x_1dx_2\wedge dx_3+x_2dx_1\wedge dx_3,\;\;\;\delta_6=12, $

\smallskip

$\theta _7= x_3^2 dx_2\wedge dx_3,\;\;\;\delta_7=13$,

\smallskip

$\theta _8= x_3^2 dx_1\wedge dx_3,\;\;\;\delta_8=14$,

\smallskip

$\theta _9= x_3^3 dx_2\wedge dx_3,\;\;\;\delta_8=16$,
\end{small}

\smallskip

If $n\ge 3$ then $[{\rm Symp} (\mathbb R^{2n})]_{W_9} = [Z^2(\mathbb R^{2n})]_{W_9}$. The manifold $[{\rm Symp} (\mathbb R^{4})]_{W_9}$ is an open part of the $9$-space $[Z^2 (\mathbb R^{4})]_{W_9}$ consisting of algebraic restrictions of the form $[c_1\theta _1 + \cdots + c_9\theta _9]_{W_9}$ such that $(c_1,c_2,c_3)\ne (0,0,0)$.
\end{thm}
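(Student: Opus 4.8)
The plan is to read off the description from Proposition~\ref{w9-all} by separating the closed generators from the non-closed ones and then constructing explicit symplectic representatives. First I would check which of the ten generators of $[\Lambda^2(\mathbb R^{2n})]_{W_9}$ are closed. Setting $\theta_6=\sigma_1+\sigma_2$, one has $d\theta_j=0$ for all $j\in\{1,\dots,9\}$: the forms $\theta_1,\theta_2,\theta_3$ have constant coefficients, each of $\theta_4,\theta_5,\theta_7,\theta_8,\theta_9$ is of the type $x_3^{k}\,dx_i\wedge dx_3$ and is therefore closed, and $d\sigma_1=dx_1\wedge dx_2\wedge dx_3=-d\sigma_2$, so $d\theta_6=0$. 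Hence $[\theta_1]_{W_9},\dots,[\theta_9]_{W_9}$ lie in $[Z^2(\mathbb R^{2n})]_{W_9}$, and they are linearly independent, since passing from the basis $\{\theta_1,\dots,\theta_5,\sigma_1,\sigma_2,\theta_7,\theta_8,\theta_9\}$ of Proposition~\ref{w9-all} to $\{\theta_1,\dots,\theta_5,\sigma_1+\sigma_2,\sigma_1-\sigma_2,\theta_7,\theta_8,\theta_9\}$ is invertible and only the class $\sigma_1-\sigma_2$ is discarded. This gives the lower bound $\dim[Z^2(\mathbb R^{2n})]_{W_9}\ge 9$. For the matching upper bound I would use the general fact recalled in the Introduction (from \cite{DJZ2}) that the space of algebraic restrictions of closed $2$-forms to a $1$-dimensional quasi-homogeneous isolated complete intersection singularity has dimension equal to the multiplicity, which for $W_9$ equals $9$; equivalently, the operator induced by $d$ on $[\Lambda^2(\mathbb R^{2n})]_{W_9}$ is nonzero (it sends $[\sigma_1]_{W_9}$ to $[dx_1\wedge dx_2\wedge dx_3]_{W_9}\ne 0$), so its kernel, which is exactly $[Z^2(\mathbb R^{2n})]_{W_9}$, is $9$-dimensional and spanned by $\theta_1,\dots,\theta_9$.

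For $n\ge 3$ the inclusion $[{\rm Symp}(\mathbb R^{2n})]_{W_9}\subseteq[Z^2(\mathbb R^{2n})]_{W_9}$ is immediate because symplectic forms are closed. For the reverse inclusion I would exhibit, for an arbitrary $[c_1\theta_1+\dots+c_9\theta_9]_{W_9}$, the closed representative
\[
\omega=\sum_{j=1}^{9}c_j\theta_j+dx_1\wedge dx_4+dx_2\wedge dx_5+dx_3\wedge dx_6+\sum_{k=4}^{n}dx_{2k-1}\wedge dx_{2k}.
\]
Each added summand $dx_i\wedge dx_j$ with $j\ge 4$ has zero algebraic restriction to $W_9$, since $x_j\,dx_i\in\Lambda^1_{W_9}(\mathbb R^{2n})$ gives $dx_i\wedge dx_j=-d(x_j\,dx_i)\in\mathcal A^2_0(W_9,\mathbb R^{2n})$; thus $\omega$ represents the prescribed algebraic restriction. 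At $0$ the summands $\theta_4,\dots,\theta_9$ vanish, while $dx_1\wedge dx_4+dx_2\wedge dx_5+dx_3\wedge dx_6$ already makes $\omega|_0$ nondegenerate, so $\omega$ is symplectic. Here the hypothesis $n\ge 3$ is exactly what supplies the three fresh coordinates $x_4,x_5,x_6$ to be paired with $x_1,x_2,x_3$.

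Finally, for $n=2$ I would analyze the value at $0$ of a general closed representative on $\mathbb R^4$. The terms $\theta_4,\dots,\theta_9$ vanish at $0$, so
\[
\omega|_0=c_3\,dx_1\wedge dx_2+c_2\,dx_1\wedge dx_3+c_1\,dx_2\wedge dx_3+(a\,dx_1+b\,dx_2+c\,dx_3)\wedge dx_4,
\]
where the last group collects all admissible corrections; computing the Pfaffian gives $\omega\wedge\omega|_0=2\,(c_1a-c_2b+c_3c)\,dx_1\wedge dx_2\wedge dx_3\wedge dx_4$, which can be made nonzero by a choice of $a,b,c$ precisely when $(c_1,c_2,c_3)\ne(0,0,0)$. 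Since nondegeneracy at $0$ is an open condition and suffices for a germ, this identifies $[{\rm Symp}(\mathbb R^4)]_{W_9}$ with the open subset of $[Z^2(\mathbb R^4)]_{W_9}$ cut out by $(c_1,c_2,c_3)\ne(0,0,0)$. I expect the main obstacle to be justifying the form of $\omega|_0$ in the case $n=2$: one must show that every correction with zero algebraic restriction contributes, at the origin, only a term $(a\,dx_1+b\,dx_2+c\,dx_3)\wedge dx_4$. This reduces to the computation that for $\beta=\sum f_i\,dx_i\in\Lambda^1_{W_9}(\mathbb R^4)$ each $f_i$ lies in the ideal of $W_9$, whose linear parts are spanned by $x_4$ alone, so that $d\beta|_0\in\mathrm{span}(dx_1\wedge dx_4,dx_2\wedge dx_4,dx_3\wedge dx_4)$; this makes the computed nondegeneracy condition both necessary and sufficient.
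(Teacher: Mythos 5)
Your proposal is correct and follows essentially the same route the paper takes implicitly: it deduces the theorem from Proposition~\ref{w9-all} together with the standard machinery of Section~\ref{proofs} (closedness and independence of $[\theta_1]_{W_9},\dots,[\theta_9]_{W_9}$, the cited fact from \cite{DJZ2} that $\dim [Z^2(\mathbb R^{2n})]_{W_9}$ equals the multiplicity $9$, and symplectic realizability by padding with $dx_1\wedge dx_4+dx_2\wedge dx_5+dx_3\wedge dx_6+\cdots$, exactly as in the forms $\omega^i$ of Section~\ref{w9-normal}), with your $n=2$ Pfaffian analysis making the openness statement explicit. The only soft spot is the parenthetical ``equivalently'' argument, where the identification of $[Z^2(\mathbb R^{2n})]_{W_9}$ with the kernel of the induced operator $d$ and the nonvanishing of $[dx_1\wedge dx_2\wedge dx_3]_{W_9}$ are asserted rather than proved (both do hold, the latter by the same linear-parts computation you use for $n=2$), but since your primary argument rests on the cited multiplicity result, this does not affect correctness.
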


\begin{thm}
\label{klasw9} $ \ $

\smallskip

\noindent (i) \ Any algebraic restriction in $[Z ^2(\mathbb R^{2n})]_{W_9}$ can be brought by a symmetry of $W_9$ to one of the normal forms $[W_9]^i$ given in the second column of Table \ref{tabw9}.

\smallskip

\noindent (ii)  \ The codimension in $[Z ^2(\mathbb R^{2n})]_{W_9}$ of the singularity class corresponding to the normal form $[W_9]^i$ is equal to $i$, the symplectic multiplicity and the index of isotropy are given in the fourth and fifth columns of Table  \ref{tabw9}.

\smallskip

\noindent (iii) \ The singularity classes corresponding to the normal forms are disjoint.

\smallskip

\noindent (iv) \ The parameters $c, c_1, c_2$ of the normal forms $[W_9]^i$ are moduli.

\end{thm}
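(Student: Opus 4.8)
The plan is to follow the strategy used for $W_8$ in Section \ref{w8-proof}, transporting every step to the data of $W_9$. The key preliminary is to compute the infinitesimal action of the quasi-homogeneous vector fields tangent to $N=W_9$ on the basis $[\theta_1]_{W_9},\dots,[\theta_9]_{W_9}$ of $[Z^2(\mathbb R^{2n})]_{W_9}$ given by Theorem \ref{w9-baza}; that is, to produce the $W_9$-analogue of Proposition \ref{w8-infinitesimal} and Table \ref{infini-w8}. As in the $W_8$ case, any vector field tangent to $W_9$ has the form $g_1E+g_2\mathcal H$ with $E=5x_1\partial/\partial x_1+4x_2\partial/\partial x_2+3x_3\partial/\partial x_3$ the Euler field and $\mathcal H$ Hamiltonian; the Hamiltonian part acts trivially on a one-dimensional complete intersection (by \cite{DT1}, as recalled for $W_8$), so only the fields $E,\,x_3E,\,x_2E,\,x_1E,\,x_3^2E,\dots$ contribute, and I would evaluate each $\mathcal L_{X}\theta_j$ by reducing the resulting monomial $2$-form modulo the syzygies collected in Table \ref{tabw91}. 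Two features organize the table: $E$ acts on $\theta_j$ by multiplication by its quasi-degree $\delta_j$, and since the weights are $3,4,5$ there is no multiplier of quasi-degree $1$ or $2$. The latter already shows that $[\theta_2]_{W_9}$ and $[\theta_3]_{W_9}$ (degrees $8,9$) cannot be produced from $[\theta_1]_{W_9}$ (degree $7$) and must remain as moduli in $[W_9]^0$.

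With this table available, part (i) is proved by a chain of reduction lemmas, one for each possible first non-zero coefficient, in direct analogy with Lemmas \ref{w8lem0}--\ref{w8lem7}. Starting from $\mathcal A=[\sum_{k=1}^9 c_k\theta_k]_{W_9}$ whose leading term is $c_j\theta_j$, I would first kill the tail $\sum_{k>j}c_k\theta_k$: when a single tangent field $X$ realizes $\mathcal L_X\theta_j=r\theta_k$ with $r\ne0$ one applies Proposition \ref{elimin1} term by term, while when several terms must be removed at once I would use the Moser homotopy method as in the proof of Lemma \ref{w8lem1}, seeking the generator in the form $\sum_k b_k(t)X_k$ and solving the associated (essentially triangular) linear system, whose solvability is precisely the non-degeneracy of the surviving moduli. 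I would then normalize the leading coefficient by a quasi-homogeneous scaling $\Psi:(x_1,x_2,x_3)\mapsto(\lambda^5x_1,\lambda^4x_2,\lambda^3x_3)\in Symm(W_9)$. The signs $\pm$ occurring in $[W_9]^1,[W_9]^3,[W_9]^5,[W_9]^7,[W_9]^8$ cannot be absorbed by such a scaling; to see that the two signs give non-diffeomorphic restrictions I would repeat the linear-part argument of Lemma \ref{w8lem2a}, computing the (always positive) factor by which an admissible linear coordinate change rescales the leading form $\theta_j|_0$ along the curve and deriving a contradiction.

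For part (ii), the codimension of $(W_9)^i$ equals $i$ by the construction of the reduction (the number of leading basis forms forced to vanish); the symplectic multiplicity follows from Proposition \ref{sm} as the codimension of the orbit, i.e. the codimension $i$ plus the number of surviving moduli, with the Lagrangian case $[W_9]^9=[0]$ covered by Theorem \ref{thm B}; and the index of isotropy follows from Proposition \ref{ii} as the maximal order of vanishing at $0$ of a closed representative, read off from the lowest-degree surviving monomial. Part (iii) is in fact cleaner than for $W_8$: no two distinct strata share a geometric condition. Indeed $(W_9)^0,(W_9)^1,(W_9)^2$ are separated by whether $\omega|_V\ne0$, and in the degenerate case by whether $\ker\omega=\ell$, while $(W_9)^3,\dots,(W_9)^8$ all satisfy $\omega|_W=0$ and carry the pairwise distinct Lagrangian tangency orders $7,8,9,10,11,13$, and $(W_9)^9$ is the unique class contained in a smooth Lagrangian submanifold. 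Since these quantities depend only on $[\omega]_N$ and can be computed directly on the representatives $\omega^i$ (the Lagrangian tangency orders via Proposition \ref{lto}), they prove disjointness without circularity; the remaining separation of the two signs within a stratum is the one already obtained in part (i). Finally, for part (iv) I would, at a point of each orbit, read off the tangent space to the orbit from the infinitesimal-action table and check that the directions $[\theta_k]_{W_9}$ carrying the parameters $c,c_1,c_2$ do not lie in it, exactly as in the $W_8$ argument.

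The main obstacle is the opening step: producing a correct and complete infinitesimal-action table for $W_9$. This requires deriving the full set of syzygies underlying Table \ref{tabw91} and then repeatedly reducing each $\mathcal L_X\theta_j$ to the basis $[\theta_1]_{W_9},\dots,[\theta_9]_{W_9}$, and the entire reduction scheme of part (i)---in particular the solvability of the homotopy linear systems and the sign obstructions---rests on the exact coefficients it records. Once the table is correct, the codimension counts, the moduli verifications, and the index-of-isotropy readings are routine, and the disjointness of strata is immediate from the Lagrangian tangency orders.
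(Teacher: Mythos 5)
Your proposal follows essentially the same route as the paper: computing the infinitesimal-action table for the fields $X_i = x^\alpha E$ (with the Hamiltonian part trivial by \cite{DT1}), reducing via Proposition \ref{elimin1} and the Moser homotopy as in Lemmas \ref{w8lem0}--\ref{w8lem2a}, normalizing by quasi-homogeneous scalings with the same linear-part sign obstruction, and settling (ii)--(iv) via Propositions \ref{sm}, \ref{ii}, Theorem \ref{thm B}, the geometric conditions of Theorem \ref{geom-cond-w9}, and the tangent-space-to-orbit computation. Your observations --- that the weights $3,4,5$ admit no multiplier of quasi-degree $1$ or $2$, and that for $W_9$ (unlike $W_8$) all strata are separated geometrically so no extra tangent-space comparison is needed for (iii) --- are both correct and consistent with the paper's argument.
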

 \setlength{\tabcolsep}{1.5mm}
\renewcommand*{\arraystretch}{1.2}
\begin{center}
\begin{table}[h]

    \begin{small}
    \noindent
    \begin{tabular}{|p{3cm}|p{5.8cm}|c|c|c|}
                      \hline
    Symplectic class &   Normal forms for algebraic restrictions    & cod & $\mu ^{\rm sym}$ &  ind  \\ \hline
  $(W_9)^0$ \;\;  $(2n\ge 4)$ & $[W_9]^0: [\theta _1 + c_1\theta _2 + c_2\theta _3]_{W_9}$,\;
                          &  $0$ & $2$ & $0$  \\  \hline
  $(W_9)^1$ \;\; $(2n\ge 4)$ & $[W_9]^1: [\pm\theta _2 + c_1\theta _3 + c_2\theta _4]_{W_9}$
                                         &  $1$ & $3$ & $0$ \\ \hline
    $(W_9)^2$ \;\; $(2n\ge 4)$& $[W_9]^2: [\theta _3 + c_1\theta_4+c_2\theta_5]_{W_9}$,
          & $2$ & $4$ & $0$     \\ \hline

    $(W_9)^3$ \;\; $(2n\ge 6)$& $[W_9]^3: [\pm\theta _4 +c_1\theta_5+c_2\theta_6 ]_{W_9}$,
               & $3$ & $5$ & $1$     \\ \hline

    $(W_9)^4$ \;\; $(2n\ge 6)$ & $[W_9]^4: [\theta _5 + c_1\theta _6+c_2\theta _7]_{W_9}$
                                         &  $4$ & $6$ & $1$    \\ \hline
    $(W_9)^5$ \;\; $(2n\ge 6)$ & $[W_9]^5: [\pm\theta _6 + c_1\theta _7 + c_2 \theta_8]_{W_9}$
                                         &  $5$ & $7$ & $1$ \\ \hline
    $(W_9)^6$ \;\; $(2n\ge 6)$ & $[W_9]^6: [\theta _7 + c\theta _8]_{W_9}$
                                         &  $6$ & $7$ & $2$ \\ \hline
    $(W_9)^7$ \;\; $(2n\ge 6)$ & $[W_9]^7: [\pm\theta _8+c\theta_9]_{W_9}$
                                         &  $7$ & $8$ & $2$   \\ \hline
    $(W_9)^8$ \;\; $(2n\ge 6)$ & $[W_9]^8: [\pm\theta_9]_{W_9}$
                                         &  $8$ & $8$ & $3$   \\ \hline
    $(W_9)^9$ \;\; $(2n\ge 6)$ & $[W_9]^9: [0]_{W_9}$ &  $9$ & $9$ & $\infty $ \\ \hline
\end{tabular}

\smallskip

\caption{\small Classification of symplectic $W_9$ singularities:  \newline
$cod$ -- codimension of the classes; \ $\mu ^{sym}$-- symplectic multiplicity; \newline $ind$ -- the index of isotropy.}\label{tabw9}

\end{small}
\end{table}
\end{center}

\medskip

\noindent In the first column of Table \ref{tabw9}  by $(W_9)^i$ we denote a subclass of $(W_9)$ consisting of $N\in (W_9)$ such that the algebraic restriction $[\omega ]_N$ is diffeomorphic to some algebraic restriction of the normal form $[W_9]^i$.

The proof of Theorem \ref{klasw9} is presented in Section \ref{w9-proof}.

\subsubsection{Symplectic normal forms. Proof of Theorem \ref{w9-main}}
\label{w9-normal} $ \ $

\medskip
\par
Let us transfer the normal forms $[W_9]^i$  to symplectic normal forms. Fix a family $\omega ^i$ of symplectic forms on $\mathbb R^{2n}$ realizing the family $[W_9]^i$ of algebraic restrictions.  We can fix, for example

\smallskip
\begin{small}

\noindent $\omega ^0 = \theta _1 + c_1\theta _2 + c_2\theta _3 +
dx_1\wedge dx_4 + dx_5\wedge dx_6 + \cdots + dx_{2n-1}\wedge
dx_{2n};$

\smallskip

\noindent $\omega ^1 = \pm \theta _2 + c_1\theta _3 + c_2\theta _4  +
dx_2\wedge dx_4 + dx_5\wedge dx_6 + \cdots + dx_{2n-1}\wedge
dx_{2n}; $

\smallskip

\noindent $\omega ^2 = \theta_3+ c_1 \theta _4 + c_2 \theta _5 + dx_3\wedge dx_4 +
dx_5\wedge dx_6 + \cdots + dx_{2n-1}\wedge dx_{2n};$

\smallskip

\noindent $\omega ^3 = \pm\theta _4 + c_1\theta _5 + c_2\theta _6+ dx_1\wedge dx_4 +
dx_2\wedge dx_5 + dx_3\wedge dx_6 + dx_7\wedge dx_8 + \cdots +
dx_{2n-1}\wedge dx_{2n};$

\smallskip

\noindent $\omega ^4 = \theta _5 + c_1\theta _6 + c_2\theta _7+ dx_1\wedge dx_4 +dx_2\wedge dx_5 +
dx_3\wedge dx_6 + dx_7\wedge dx_8 + \cdots + dx_{2n-1}\wedge dx_{2n};$

\smallskip

\noindent $\omega ^5 = \pm\theta _6 + c_1\theta _7+ c_2\theta _8+ dx_1\wedge dx_4 + dx_2\wedge dx_5 + dx_3\wedge dx_6 + dx_7\wedge dx_8 + \cdots + dx_{2n-1}\wedge
dx_{2n};$

\smallskip

\noindent $\omega ^6 = \theta _7 + c\theta _8+ dx_1\wedge dx_4 + dx_2\wedge dx_5 +
dx_3\wedge dx_6 + dx_7\wedge dx_8 + \cdots + dx_{2n-1}\wedge
dx_{2n};$

\smallskip

\noindent $\omega ^7 = \pm\theta _8+ c\theta _9+ dx_1\wedge dx_4 + dx_2\wedge dx_5 +
dx_3\wedge dx_6 + dx_7\wedge dx_8 + \cdots + dx_{2n-1}\wedge
dx_{2n};$

\smallskip

\noindent $\omega ^8 = \pm\theta _9+ dx_1\wedge dx_4 + dx_2\wedge dx_5 +
dx_3\wedge dx_6 + dx_7\wedge dx_8 + \cdots + dx_{2n-1}\wedge
dx_{2n};$

\smallskip

\noindent $\omega ^9 = dx_1\wedge dx_4 + dx_2\wedge dx_5 +
dx_3\wedge dx_6 + dx_7\wedge dx_8 + \cdots + dx_{2n-1}\wedge
dx_{2n}.$
\end{small}

\medskip
 Let $\omega_0 = \sum_{i=1}^m dp_i \wedge dq_i$, where $(p_1,q_1,\cdots,p_n,q_n)$ is the coordinate system on $\mathbb R^{2n}, n\ge 3$ (resp. $n=2$). Fix, for $i=0,1,\cdots ,9$ (resp. for $i = 0,1,2)$ a family $\Phi ^i$ of local diffeomorphisms which bring the family of symplectic forms $\omega ^i$ to the symplectic form $\omega_0$: $(\Phi ^i)^*\omega ^i = \omega_0$. Consider the families $W_9^i = (\Phi ^i)^{-1}(W_8)$. Any stratified submanifold of the symplectic space $(\mathbb R^{2n}, \omega_0)$ which is diffeomorphic to $W_9$ is symplectically equivalent to one and only one of the normal forms $W_9^i, i = 0,1,\cdots ,9$ (resp. $i= 0,1,2$) presented in Theorem \ref{w9-main}. By Theorem \ref{klasw9} we obtain that  parameters $c,c_1,c_2$ of the normal forms are moduli.

\medskip

\subsubsection{Proof of Theorem \ref{klasw9}}
\label{w9-proof} $ \ $

\medskip
In our proof we use vector fields tangent to $N\in W_9$.

\noindent The germ of a vector field tangent to $W_8$ of non trivial action on algebraic restrictions of closed 2-forms to  $W_9$ may be described as a linear combination germs of the following vector fields: \par\noindent $X_0\!=E$, $X_1\!=x_3E, \ X_2\!=x_2E, \ X_3\!=x_1E, \ X_4\!=x_3^2E, X_5\!=x_2x_3E$, \ $X_6\!=x_2^2E$, $X_7\!=x_1x_3E$,  $X_8\!=x_1x_2E$, $X_9\!=x_3^3E,$ \par\noindent where $E$ is the Euler vector field $E= 5 x_1 \partial /\partial x_1+4 x_2 \partial /\partial x_2+3 x_3 \partial /\partial x_3$.

\begin{prop} \label{w9-infinitesimal}

The infinitesimal action of germs of quasi-homogeneous vector
fields tangent to $N\in (W_9)$ on the basis of the vector space of
algebraic restrictions of closed $2$-forms to $N$ is presented in
Table \ref{infini-w9}.

\renewcommand*{\arraystretch}{1.2}
\begin{small}
\begin{table}[h]
\begin{center}
\begin{tabular}{|l|r|r|r|r|r|r|r|r|r|} \hline

  $\mathcal L_{X_i} [\theta_j]$ & $[\theta_1]$   &   $[\theta_2]$ &   $[\theta_3]$ & $[\theta_4]$   & $[\theta_5]$   & $[\theta_6]$   & $[\theta_7]$ & $[\theta_8]$ & $[\theta_9]$ \\ \hline

  $X_0\!=\!E$ & $7 [\theta_1]$ & $8 [\theta_2]$ & $9 [\theta_3]$ & $10 [\theta_4]$ & $11 [\theta_5]$ & $12 [\theta_6]$ & $13 [\theta_7]$  & $14 [\theta_8]$ & $16[\theta_9]$ \\ \hline

  $X_1\!=\!x_3E$  & $10[\theta_4]$ & $11 [\theta_5]$ & $4[\theta_6]$  & $13[\theta_7]$  & $14[\theta_8]$ & $[0]$  & $16[\theta_9]$ & $[0]$ & $[0]$ \\  \hline

  $X_2\!=\!x_2E$ & $-\frac{11}{2} [\theta_5]$ & $ 8[\theta_6]$ & $\frac{13}{4} [\theta_7]$ & $-7 [\theta_8]$ & $ [0]$ & $12[\theta_9]$ & $[0]$ & $[0]$ & $[0]$ \\  \hline

  $X_3\!=\!x_1 E$ & $4 [\theta_6]$  & $-\frac{13}{2}[\theta_7]$ & $-7[\theta_8]$  & $[0]$ & $-8[\theta_9]$ & $[0]$  & $[0]$  & $[0]$ & $[0]$ \\  \hline

  $X_4\!=\!x_3^2E$ & $ 13[\theta_7]$ & $14[\theta_8]$  & $[0]$ & $16[\theta_9]$ & $[0]$ & $[0]$ & $[0]$ & $[0]$ & $[0]$ \\ \hline

  $X_5\!=\!x_2x_3 E$ & $-7[\theta_8]$  & $[0]$ & $4[\theta_9]$  & $[0]$ & $[0]$ & $[0]$  & $[0]$  & $[0]$ & $[0]$ \\  \hline

  $X_6\!=\!x_2^2 E$ & $[0]$  & $8[\theta_9]$  & $[0]$ & $[0]$ & $[0]$ & $[0]$ & $[0]$ & $[0]$ & $[0]$ \\ \hline

  $X_7\!=\!x_1x_3 E$ & $[0]$  & $-8[\theta_9]$  & $[0]$ & $[0]$ & $[0]$ & $[0]$ & $[0]$ & $[0]$ & $[0]$ \\ \hline

  $X_8\!=\!x_1x_2 E$ & $4[\theta_9]$  & $[0]$ & $[0]$  & $[0]$ & $[0]$ & $[0]$  & $[0]$  & $[0]$ & $[0]$ \\  \hline

  $X_9\!=\!x_3^3 E$ & $16[\theta_9]$  & $[0]$ & $[0]$  & $[0]$ & $[0]$ & $[0]$  & $[0]$  & $[0]$ & $[0]$ \\  \hline

\end{tabular}
\end{center}

\smallskip

\caption{\small Infinitesimal actions on algebraic restrictions of closed
2-forms to   $W_9$. $E=5x_1 \partial /\partial x_1+ 4x_2 \partial /\partial x_2+ 3x_3 \partial /\partial x_3$}\label{infini-w9}
\end{table}
\end{small}
\end{prop}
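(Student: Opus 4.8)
The plan is to verify Table \ref{infini-w9} entry by entry, organized so that a grading argument fixes the \emph{target} of each Lie derivative and reduces the actual work to computing a single scalar per entry. First I would record two preliminary facts. Each basis form $\theta_j$ of Theorem \ref{w9-baza} is a closed $2$-form on $\mathbb R^{2n}$ (the combination $\theta_6=\sigma_1+\sigma_2$ was chosen precisely so that $d\theta_6=0$), so Cartan's formula collapses to $\mathcal L_{X_i}\theta_j=d(\iota_{X_i}\theta_j)+\iota_{X_i}(d\theta_j)=d(\iota_{X_i}\theta_j)$. Moreover, since each $X_i$ is tangent to $W_9$, the operator $\mathcal L_{X_i}$ carries $\mathcal A_0^2(W_9,\mathbb R^{2n})$ into itself and hence descends to a well-defined action on algebraic restrictions (Section \ref{method}, \cite{DJZ2}); thus it suffices to compute any representative $2$-form and then reduce it modulo the relations of Table \ref{tabw91}.

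The key simplification is a grading argument. Writing $X_i=f_iE$ with $f_i$ quasi-homogeneous of degree $d_i$, the field $X_i$ is quasi-homogeneous of degree $d_i$ (the Euler field $E$ has degree $0$), so $\mathcal L_{X_i}$ raises quasi-degree by $d_i$; concretely $d_i=0,3,4,5,6,7,8,8,9,9$ for $i=0,\dots,9$. Since $\theta_j$ has quasi-degree $\delta_j$, the form $\mathcal L_{X_i}\theta_j$ is quasi-homogeneous of degree $d_i+\delta_j$. The degrees occurring in $[Z^2(\mathbb R^{2n})]_{W_9}$ are $7,8,9,10,11,12,13,14,16$ -- pairwise distinct, with nothing in degree $15$ nor above $16$. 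Hence every graded piece of $[Z^2(\mathbb R^{2n})]_{W_9}$ is at most one-dimensional, and $\mathcal L_{X_i}\theta_j$ is forced to be a scalar multiple of the unique basis form $\theta_k$ with $\delta_k=d_i+\delta_j$, and to vanish whenever no such $\theta_k$ exists. This accounts at once for all the $[0]$ entries whose target degree is $15$ or exceeds $16$, and makes the first row immediate: $\mathcal L_{X_0}\theta_j=\mathcal L_E\theta_j=\delta_j\theta_j$.

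For each remaining entry I would carry out the concrete computation in three mechanical steps: contract, using $\iota_E\,dx_k=w_kx_k$ (with $w_1=5,\,w_2=4,\,w_3=3$) to evaluate $\iota_{X_i}\theta_j=f_i\,\iota_E\theta_j$; differentiate to obtain $\mathcal L_{X_i}\theta_j=d(\iota_{X_i}\theta_j)$ as an explicit monomial $2$-form; and reduce its algebraic restriction to the predicted multiple of $\theta_k$ using the relations of Table \ref{tabw91} together with the defining relations (\ref{w91}) and (\ref{w92}). For instance $\iota_{X_1}\theta_1=x_3(4x_2\,dx_3-3x_3\,dx_2)$, whose differential is $10\,x_3\,dx_2\wedge dx_3=10\theta_4$, matching the $(X_1,\theta_1)$ entry with no reduction needed.

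The main obstacle is the bookkeeping in the reduction step, and specifically the appearance of the fractional coefficients $-\tfrac{11}{2},\tfrac{13}{4},-\tfrac{13}{2},-\tfrac72$. These never arise from the differentiation, which always yields integer-coefficient monomial forms, but from invoking the relations of Table \ref{tabw91}: for example the degree-$11$ relation $[x_3\,dx_1\wedge dx_3]_N=-2[x_2\,dx_2\wedge dx_3]_N$ forces a representative $c\,x_2\,dx_2\wedge dx_3$ to equal $-\tfrac{c}{2}\theta_5$, so the differentiated form $11\,x_2\,dx_2\wedge dx_3$ obtained for $(X_2,\theta_1)$ becomes $-\tfrac{11}{2}\theta_5$; the degree-$13$ and degree-$16$ relations introduce the factor $\tfrac14$ and the remaining fractions when two relations must be chained. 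Thus while the grading argument removes all ambiguity about \emph{which} $\theta_k$ appears, pinning down its coefficient requires expressing each differentiated form in the chosen basis through the correct sequence of relations, and it is exactly this consistent chaining of the Table \ref{tabw91} identities where care is required.
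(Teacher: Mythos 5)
Your proposal is correct and follows essentially the same route as the paper, which states Proposition \ref{w9-infinitesimal} as the result of a direct computation: Cartan's formula $\mathcal L_{X_i}\theta_j=d(\iota_{X_i}\theta_j)$, the quasi-degree argument forcing each image to be a multiple of the unique basis form of degree $d_i+\delta_j$ (or $[0]$ when that degree is $15$ or exceeds $16$), and reduction via the relations of Table \ref{tabw91} is exactly the computation behind the table, and spot-checking entries (e.g. $(X_2,\theta_1)\mapsto -\tfrac{11}{2}[\theta_5]$, $(X_2,\theta_3)\mapsto \tfrac{13}{4}[\theta_7]$, $(X_2,\theta_6)\mapsto 12[\theta_9]$, $(X_7,\theta_2)\mapsto -8[\theta_9]$) confirms agreement. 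One trivial slip in your commentary: the fractional coefficients occurring are only $-\tfrac{11}{2},\tfrac{13}{4},-\tfrac{13}{2}$; the value $-\tfrac{7}{2}$ never appears, the $-7$ entries arising as $14\cdot\bigl(-\tfrac12\bigr)$ from the degree-$14$ relation.
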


\newpage

Let $\mathcal{A}=[\sum_{k=1}^9 \theta_k]_{W_9}$
be the algebraic restriction of a symplectic form $\omega$.

\smallskip

The first statement of Theorem \ref{klasw9} follows from the following lemmas.

\begin{lem}
\label{w9lem0} If \;$c_1\ne 0$\; then the algebraic restriction
$\mathcal{A}=[\sum_{k=1}^9 c_k \theta_k]_{W_9}$
can be reduced by a symmetry of $W_9$ to an algebraic restriction $[\theta_1+\widetilde{c}_2 \theta_2+\widetilde{c}_3 \theta_3]_{W_9}$.
\end{lem}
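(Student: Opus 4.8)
The plan is to follow verbatim the strategy already used for the analogous statement Lemma \ref{w8lem0}, substituting the $W_9$ infinitesimal data of Table \ref{infini-w9} for the $W_8$ data. Since $c_1\ne 0$, the lowest quasi-degree term occurring in $\mathcal{A}$ is $[\theta_1]_{W_9}$ (degree $7$), and the idea is to use the infinitesimal action of the tangent Euler-type fields $X_i$ to successively absorb every higher-degree generator into $[\theta_1]_{W_9}$, leaving only $[\theta_2]_{W_9}$ and $[\theta_3]_{W_9}$ (degrees $8$ and $9$), which cannot be reached from $[\theta_1]_{W_9}$.

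First I would read off from Table \ref{infini-w9} the relations $\mathcal{L}_{X_1}[\theta_1]=10[\theta_4]$, $\mathcal{L}_{X_2}[\theta_1]=-\frac{11}{2}[\theta_5]$, $\mathcal{L}_{X_3}[\theta_1]=4[\theta_6]$, $\mathcal{L}_{X_4}[\theta_1]=13[\theta_7]$, $\mathcal{L}_{X_5}[\theta_1]=-7[\theta_8]$ and $\mathcal{L}_{X_8}[\theta_1]=4[\theta_9]$, each with a nonzero coefficient. Taking $s=1$ (so $a_s=\theta_1$) in Proposition \ref{elimin1} and applying it in order of increasing quasi-degree $k=4,5,6,7,8,9$, each step kills the coefficient of $[\theta_k]_{W_9}$ at the possible cost of altering only strictly higher-degree coefficients, which are removed in the subsequent steps. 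Because $\theta_2$ and $\theta_3$ have degrees $8<10$ and $9<10$, they lie in the protected range $\{1,\dots,k-1\}$ throughout and are never affected. Hence $\mathcal{A}$ is diffeomorphic to $[c_1\theta_1+c_2\theta_2+c_3\theta_3]_{W_9}$.

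The final step normalizes the coefficient of $\theta_1$. Using $c_1\ne 0$, the weighted scaling $\Psi\colon(x_1,x_2,x_3)\mapsto(c_1^{-5/7}x_1,\,c_1^{-4/7}x_2,\,c_1^{-3/7}x_3)$ is a symmetry of $W_9$, since $W_9$ is quasi-homogeneous with weights $5,4,3$; and since $\Psi^*\theta_i=c_1^{-\delta_i/7}\theta_i$ with $\delta_1=7$, it sends $[c_1\theta_1+c_2\theta_2+c_3\theta_3]_{W_9}$ to $[\theta_1+\widetilde c_2\theta_2+\widetilde c_3\theta_3]_{W_9}$, where $\widetilde c_2=c_2c_1^{-8/7}$ and $\widetilde c_3=c_3c_1^{-9/7}$. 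This yields the claimed normal form.

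I expect no genuine obstacle here, so the main point to verify carefully is the consistency of the iterated use of Proposition \ref{elimin1}: namely that eliminating a higher-degree generator never reintroduces a term of degree $\le 9$. This holds because every tangent field available in Table \ref{infini-w9} has positive quasi-degree $\ge 3$, so no Lie derivative of $\theta_1$ can land in degree $8$ or $9$. This is also precisely the reason why $[\theta_2]_{W_9}$ and $[\theta_3]_{W_9}$ survive, so that their coefficients persist as the moduli of the normal form $[W_9]^0$ in Table \ref{tabw9}.
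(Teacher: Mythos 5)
Your proof is correct and matches the paper's (the paper omits the details for Lemma \ref{w9lem0}, noting only that it is analogous to Lemma \ref{w8lem0}, and your argument is exactly that analogue): the Lie-derivative data you quote from Table \ref{infini-w9} are accurate, the iterated use of Proposition \ref{elimin1} in increasing quasi-degree is valid since the proposition preserves the coefficients $c_j$ with $j<k$, and the weighted rescaling with exponents $-5/7,-4/7,-3/7$ normalizes $c_1$ to $1$ without a sign ambiguity because $\delta_1=7$ is odd, just as in the paper's proof of Lemma \ref{w8lem0}.
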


\begin{lem}
\label{w9lem1} If \;$c_1\!=\!0$\;and $c_2\ne\!0$ then the algebraic restriction  $\mathcal{A}$ 
can be reduced by a symmetry of $W_9$ to an algebraic restriction $[\pm \theta_2+ \widetilde{c}_3\theta_3+\widetilde{c}_4 \theta_4]_{W_9}$.
\end{lem}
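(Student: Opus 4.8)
The plan is to follow exactly the template already established by Lemma \ref{w8lem0} (and its $W_8$ analogues), since the $W_9$ situation is structurally identical: we have a quasi-homogeneous basis $[\theta_1]_{W_9},\dots,[\theta_9]_{W_9}$ of $[Z^2(\mathbb R^{2n})]_{W_9}$ with strictly increasing quasi-degrees (by Theorem \ref{w9-baza}), and the infinitesimal action of the tangent vector fields $X_0,\dots,X_9$ is tabulated in Table \ref{infini-w9}. The goal, given $c_1=0$ and $c_2\neq 0$, is to kill all the higher-degree coefficients $c_4,\dots,c_9$ and then normalize the coefficient of $\theta_2$ to $\pm 1$.

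First I would invoke Proposition \ref{elimin1} repeatedly to eliminate the coefficients of $[\theta_k]_{W_9}$ for $k\in\{5,6,7,8,9\}$. Reading off Table \ref{infini-w9}, the lowest-degree form here is $[\theta_2]_{W_9}$ (degree $8$), whose image under the $X_i$ is $\mathcal L_{X_1}[\theta_2]=11[\theta_5]$, $\mathcal L_{X_2}[\theta_2]=8[\theta_6]$, $\mathcal L_{X_3}[\theta_2]=-\tfrac{13}{2}[\theta_7]$, $\mathcal L_{X_4}[\theta_2]=14[\theta_8]$, and $\mathcal L_{X_6}[\theta_2]=\mathcal L_{X_7}[\theta_2]=\pm 8[\theta_9]$. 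Thus for each $k\in\{5,6,7,8,9\}$ there is a tangent quasi-homogeneous field $X$ with $\mathcal L_X[\theta_2]_{W_9}=r[\theta_k]_{W_9}$, $r\neq 0$, so Proposition \ref{elimin1} (with $s=2$, $a_s=\theta_2$, $c_s=c_2\neq 0$) lets us successively remove $c_5,\dots,c_9$. This leaves $\mathcal A$ diffeomorphic to $[c_2\theta_2+\widehat c_3\theta_3+\widehat c_4\theta_4]_{W_9}$ for some reals $\widehat c_3,\widehat c_4$; note $\theta_3$ and $\theta_4$ survive because they lie strictly between $\theta_2$ and the eliminated forms in degree and are not hit from $\theta_2$ in this way.

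Next I would normalize $c_2$ to $\pm 1$ by a weighted scaling symmetry of $W_9$, exactly as in \eqref{proofw8lem04}. Since $W_9$ is quasi-homogeneous with weights $w(x_1)=5,\,w(x_2)=4,\,w(x_3)=3$, the map
\begin{equation}
\Psi:\,(x_1,x_2,x_3)\mapsto (|c_2|^{-5/8}x_1,\,|c_2|^{-4/8}x_2,\,|c_2|^{-3/8}x_3)
\end{equation}
lies in $Symm(W_9)$ and multiplies $[\theta_j]_{W_9}$ of quasi-degree $\delta_j$ by $|c_2|^{-\delta_j/8}$. Applying $\Psi^*$ sends $c_2\theta_2$ (degree $8$) to $\tfrac{c_2}{|c_2|}\theta_2=\pm\theta_2$ and rescales $\widehat c_3,\widehat c_4$ to new constants $\widetilde c_3,\widetilde c_4$, yielding the claimed form $[\pm\theta_2+\widetilde c_3\theta_3+\widetilde c_4\theta_4]_{W_9}$.

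I expect the only genuine subtlety to be bookkeeping rather than conceptual: one must check that each elimination step via Proposition \ref{elimin1} does not reintroduce a coefficient on $\theta_3$ or $\theta_4$ in an uncontrolled way, and that after eliminating the top forms no cross-terms force a residual $\theta_5$ back in. Because the action table is essentially ``upper triangular'' in quasi-degree (every $\mathcal L_{X_i}$ strictly raises degree), the eliminations can be ordered from highest degree downward so that each step only affects strictly higher-degree coefficients, and this triangularity guarantees termination without feedback. The sign ambiguity $\pm$ is genuine and cannot be removed, mirroring Lemma \ref{w8lem2a}; a full treatment would verify via the induced linear action on $dx_1\wedge dx_3|_0$ that $[\theta_2]$ and $[-\theta_2]$ are inequivalent, but that belongs to the disjointness statement (iii) rather than to this reduction lemma.
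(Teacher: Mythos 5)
Your proposal is correct and essentially reproduces the paper's own proof: the paper likewise uses Table \ref{infini-w9} to find tangent fields with $\mathcal L_{V_k}[\theta_2]_{W_9}=[\theta_k]_{W_9}$ for $k\in\{5,\dots,9\}$, invokes Proposition \ref{elimin1} to reach $[c_2\theta_2+c_3\theta_3+c_4\theta_4]_{W_9}$, and applies exactly your scaling $\Psi$ (the paper additionally proves the $\pm$ inequivalence inside the lemma via the linear part of a $W_9$-preserving diffeomorphism acting on $dx_1\wedge dx_3\vert_0$, which you reasonably defer to statement (iii)). One bookkeeping slip: since Proposition \ref{elimin1} preserves the coefficients of index \emph{below} $k$ and only scrambles those \emph{above} $k$, the eliminations should be ordered from lowest quasi-degree upward, not ``from highest degree downward'' as you wrote — your order would reintroduce already-killed top coefficients and require further passes.
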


\begin{lem}
\label{w9lem2} If \;$c_1\!=c_2=\!0$\;and $c_3\ne\!0$ then the algebraic restriction  $\mathcal{A}$ 
can be reduced by a symmetry of $W_9$ to an algebraic restriction $[\theta_3+ \widetilde{c}_4\theta_4+\widetilde{c}_5 \theta_5]_{W_9}$.
\end{lem}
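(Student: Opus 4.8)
The plan is to follow the two-stage scheme used for the $W_8$ analogue (Lemma \ref{w8lem2b}): first eliminate, via Proposition \ref{elimin1}, all terms of quasi-degree higher than $\delta_5=11$, and then rescale to normalise the leading coefficient $c_3$. Throughout, the leading basis element is $a_s=\theta_3$ (so $s=3$ in the notation of Proposition \ref{elimin1}), which is legitimate because the hypotheses $c_1=c_2=0$, $c_3\neq 0$ give $\mathcal{A}=[c_3\theta_3+c_4\theta_4+\cdots+c_9\theta_9]_{W_9}$ with $c_3\neq0$.

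First I would read off from the $[\theta_3]$ column of Table \ref{infini-w9} the nontrivial actions $\mathcal{L}_{X_1}[\theta_3]_{W_9}=4[\theta_6]_{W_9}$, $\mathcal{L}_{X_2}[\theta_3]_{W_9}=\tfrac{13}{4}[\theta_7]_{W_9}$, $\mathcal{L}_{X_3}[\theta_3]_{W_9}=-7[\theta_8]_{W_9}$, and $\mathcal{L}_{X_5}[\theta_3]_{W_9}=4[\theta_9]_{W_9}$, each with nonzero coefficient. Applying Proposition \ref{elimin1} successively with $k=6,7,8,9$ (from the lowest to the highest quasi-degree) then removes, one at a time, the terms $[\theta_6]_{W_9},[\theta_7]_{W_9},[\theta_8]_{W_9},[\theta_9]_{W_9}$. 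The decisive bookkeeping point is that each application of the proposition leaves untouched every term of quasi-degree strictly below the one being eliminated; processing $k$ in increasing order therefore keeps the coefficient of $\theta_3$ nonzero and never disturbs the terms $\theta_4,\theta_5$ we wish to retain. Note also that no tangent field sends $[\theta_3]_{W_9}$ to a nonzero multiple of $[\theta_4]_{W_9}$ or $[\theta_5]_{W_9}$ (those entries of the column vanish), which is precisely why $\theta_4,\theta_5$ persist as moduli. After this stage $\mathcal{A}$ is diffeomorphic to $[c_3\theta_3+c_4\theta_4+c_5\theta_5]_{W_9}$.

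For the second stage I would use the scaling symmetry $\Psi\colon (x_1,x_2,x_3)\mapsto (c_3^{-5/9}x_1,\,c_3^{-4/9}x_2,\,c_3^{-1/3}x_3)$, which lies in $Symm(W_9)$ because its exponents are proportional to the weights $(5,4,3)$ of $W_9$. Since $\theta_3=dx_1\wedge dx_2$ has quasi-degree $9$ and $\theta_4,\theta_5$ have quasi-degrees $10$ and $11$, this diffeomorphism multiplies the three surviving coefficients by $c_3^{-1},c_3^{-10/9},c_3^{-11/9}$, yielding $\Psi^*\mathcal{A}=[\theta_3+\widetilde{c}_4\theta_4+\widetilde{c}_5\theta_5]_{W_9}$ with $\widetilde{c}_4=c_3^{-10/9}c_4$ and $\widetilde{c}_5=c_3^{-11/9}c_5$. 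Because $9$ is odd, $c_3^{-1/9}$ is a well-defined real number for every $c_3\neq0$ of either sign, so $c_3$ is normalised exactly to $1$ and no $\pm$ ambiguity appears in the normal form — in contrast to Lemma \ref{w9lem1}, where $\theta_2$ has even quasi-degree.

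The argument carries little conceptual weight; the only points demanding care are the order of elimination in the first stage (to avoid reintroducing lower-degree terms) and checking that the fractional powers of $c_3$ defining $\Psi$ are real. The one fact worth confirming once, before invoking Proposition \ref{elimin1}, is that $\theta_6,\dots,\theta_9$ are exactly the higher-degree basis elements reachable from $\theta_3$ under the tangent action, which Table \ref{infini-w9} provides directly.
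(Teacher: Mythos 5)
Your proof is correct and follows exactly the scheme the paper intends for this lemma (which it leaves implicit, referring to the proofs of the $W_8$ lemmas and the worked example of Lemma \ref{w9lem1}): elimination of $\theta_6,\dots,\theta_9$ via Table \ref{infini-w9} and Proposition \ref{elimin1}, followed by the quasi-homogeneous rescaling with $t=c_3^{-1/9}$. Your table entries, the increasing-order bookkeeping, and the observation that the odd quasi-degree $\delta_3=9$ yields an exact normalisation of $c_3$ to $1$ (no $\pm$, unlike Lemma \ref{w9lem1}) all match the paper's normal form $[W_9]^2$.
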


\begin{lem}
\label{w9lem3} If \;$c_1\!=c_2=c_3=\!0$\;and $c_4\ne\!0$ then the algebraic restriction  $\mathcal{A}$ 
can be reduced by a symmetry of $W_9$ to an algebraic restriction $[\pm\theta_4+ \widetilde{c}_5\theta_5+\widetilde{c}_6 \theta_6]_{W_9}$.
\end{lem}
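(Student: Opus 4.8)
The plan is to mimic the reductions in Lemmas \ref{w9lem0}--\ref{w9lem2}: first eliminate every basis form of quasi-degree strictly above that of $\theta_6$, then rescale to normalize the leading coefficient. Since $c_1=c_2=c_3=0$ and $c_4\ne 0$, we start from
\[
\mathcal{A}=[c_4\theta_4+c_5\theta_5+c_6\theta_6+c_7\theta_7+c_8\theta_8+c_9\theta_9]_{W_9},
\]
whose term of lowest quasi-degree is $c_4\theta_4$ (degree $10$), with $c_4\ne 0$ throughout.

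First I would read off from Table \ref{infini-w9} the three relevant infinitesimal actions on the leading form,
\[
\mathcal L_{X_1}[\theta_4]_{W_9}=13[\theta_7]_{W_9},\quad
\mathcal L_{X_2}[\theta_4]_{W_9}=-7[\theta_8]_{W_9},\quad
\mathcal L_{X_4}[\theta_4]_{W_9}=16[\theta_9]_{W_9},
\]
where $X_1=x_3E$, $X_2=x_2E$, $X_4=x_3^2E$ are tangent to $W_9$. Each carries $\theta_4$ to a nonzero multiple of a single higher-degree basis form, so Proposition \ref{elimin1} applies with $a_s=\theta_4$. Applying it three times in order of increasing degree --- $X_1$ to remove the $\theta_7$-term, then $X_2$ to remove the $\theta_8$-term, then $X_4$ to remove the $\theta_9$-term --- reduces $\mathcal{A}$ to $[c_4\theta_4+c_5\theta_5+c_6\theta_6]_{W_9}$. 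The point that makes this chain work is the ordering: by Proposition \ref{elimin1} each elimination leaves the coefficients of all basis forms of strictly smaller degree untouched, so once $\theta_7$ has been set to zero it stays zero while $\theta_8$ is cleared, and likewise for $\theta_9$, while $\theta_4,\theta_5,\theta_6$ are never disturbed.

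Finally I would normalize the leading coefficient. Since $c_4\ne 0$, the quasi-homogeneous scaling
\[
\Psi:(x_1,x_2,x_3)\mapsto\bigl(|c_4|^{-\frac{5}{10}}x_1,\;|c_4|^{-\frac{4}{10}}x_2,\;|c_4|^{-\frac{3}{10}}x_3\bigr)
\]
lies in $Symm(W_9)$, and since $\theta_4,\theta_5,\theta_6$ are quasi-homogeneous of degrees $10,11,12$ one gets $\Psi^*\theta_j=|c_4|^{-\delta_j/10}\theta_j$, hence
\[
\Psi^*\bigl([c_4\theta_4+c_5\theta_5+c_6\theta_6]_{W_9}\bigr)
=\Bigl[\tfrac{c_4}{|c_4|}\theta_4+\widetilde{c}_5\theta_5+\widetilde{c}_6\theta_6\Bigr]_{W_9}
=[\pm\theta_4+\widetilde{c}_5\theta_5+\widetilde{c}_6\theta_6]_{W_9},
\]
with $\widetilde{c}_5=c_5|c_4|^{-\frac{11}{10}}$ and $\widetilde{c}_6=c_6|c_4|^{-\frac{12}{10}}$, which is the asserted form.

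I do not expect a genuine obstacle here; it is the same routine scheme already used in Lemmas \ref{w9lem0}--\ref{w9lem2}. The only step needing care is verifying that the three Lie-derivative entries above are indeed nonzero and single-term, so that Proposition \ref{elimin1} can be invoked directly rather than through a coupled homotopy as in Lemma \ref{w8lem1}. Note also that the sign of $c_4$ survives the rescaling (which involves only $|c_4|$), so it is recorded as the $\pm$ in the normal form; unlike the $W_8$ situation there is no obvious reflection in $Symm(W_9)$ absorbing it, since $x_2\mapsto -x_2$ fails to preserve the relation $x_1^2+x_2x_3^2=0$.
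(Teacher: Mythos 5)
Your proof is correct and follows essentially the same route as the paper: the paper proves Lemma \ref{w9lem1} as a template (eliminate the higher-degree terms via Proposition \ref{elimin1} using tangent fields read off from Table \ref{infini-w9}, then normalize the leading coefficient by the quasi-homogeneous scaling, which leaves only the sign $\pm$) and states that Lemmas \ref{w9lem0}--\ref{w9lem8} are proved in the same way. Your Lie-derivative entries $\mathcal L_{X_1}[\theta_4]=13[\theta_7]$, $\mathcal L_{X_2}[\theta_4]=-7[\theta_8]$, $\mathcal L_{X_4}[\theta_4]=16[\theta_9]$ match Table \ref{infini-w9}, and your scaling with exponents $5/10,4/10,3/10$ is exactly the analogue of the map (\ref{proofw9lem24}) used there.
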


\begin{lem}
\label{w9lem4} If \;$c_1\!=\ldots=c_4=\!0$\;and $c_5\ne\!0$ then the algebraic restriction  $\mathcal{A}$ 
can be reduced by a symmetry of $W_9$ to an algebraic restriction $[\theta_5+ \widetilde{c}_6\theta_6+\widetilde{c}_7 \theta_7]_{W_9}$.
\end{lem}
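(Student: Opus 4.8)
The plan is to mirror the reductions carried out in Lemmas \ref{w9lem0}--\ref{w9lem3}: first eliminate the two top-degree basis terms using the infinitesimal action recorded in Table \ref{infini-w9} together with Proposition \ref{elimin1}, and then normalize the coefficient of $\theta_5$ by a quasi-homogeneous rescaling lying in $Symm(W_9)$. Throughout, $\mathcal A$ denotes $[c_5\theta_5 + c_6\theta_6 + c_7\theta_7 + c_8\theta_8 + c_9\theta_9]_{W_9}$ with $c_5\ne 0$.

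First I would read off from Table \ref{infini-w9} the two relations $\mathcal L_{X_1}[\theta_5]_{W_9} = 14[\theta_8]_{W_9}$, where $X_1 = x_3E$, and $\mathcal L_{X_3}[\theta_5]_{W_9} = -8[\theta_9]_{W_9}$, where $X_3 = x_1 E$. Since $c_5\ne 0$, both coefficients are nonzero, so Proposition \ref{elimin1} applies with $s=5$. Because eliminating a term $[\theta_k]_{W_9}$ can only alter the coefficients of strictly higher-degree basis elements, I would eliminate in increasing order of degree: first use $X_1$ to kill $[\theta_8]_{W_9}$ (this may modify the $[\theta_9]_{W_9}$ coefficient), and then use $X_3$ to kill the resulting $[\theta_9]_{W_9}$ term, which resurrects nothing since $\theta_9$ is the top basis element. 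This reduces $\mathcal A$ to an algebraic restriction of the form $[c_5\theta_5 + c_6\theta_6 + c_7\theta_7]_{W_9}$. I would also note, from the $[\theta_5]$ column of Table \ref{infini-w9}, that every tangent field sends $[\theta_5]_{W_9}$ either to a multiple of $[\theta_5]_{W_9}$ itself or to a multiple of $[\theta_8]_{W_9}$ or $[\theta_9]_{W_9}$, never to $[\theta_6]_{W_9}$ or $[\theta_7]_{W_9}$; hence these two terms genuinely survive and account for the moduli $\widetilde c_6,\widetilde c_7$.

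Second I would normalize the coefficient of $\theta_5$. The form $\theta_5 = x_3\,dx_1\wedge dx_3$ is quasi-homogeneous of degree $\delta_5 = 11$ for the weights $(w_1,w_2,w_3)=(5,4,3)$. For any real $t\ne 0$ the linear map $\Psi_t\colon (x_1,x_2,x_3)\mapsto (t^5 x_1,\,t^4 x_2,\,t^3 x_3)$ lies in $Symm(W_9)$, since it rescales $x_1^2+x_2x_3^2$ by $t^{10}$ and $x_2^2+x_1x_3$ by $t^8$ and therefore preserves the ideal defining $W_9$; crucially $t$ need not be positive, because $11$ is odd. Taking $t = c_5^{-1/11}$ gives $\Psi_t^*\theta_5 = t^{11}\theta_5 = c_5^{-1}\theta_5$, so that $\Psi_t^*\bigl([c_5\theta_5+c_6\theta_6+c_7\theta_7]_{W_9}\bigr) = [\theta_5 + c_6 t^{12}\theta_6 + c_7 t^{13}\theta_7]_{W_9} = [\theta_5 + \widetilde c_6\theta_6 + \widetilde c_7\theta_7]_{W_9}$, which is the desired normal form.

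The only delicate point is the bookkeeping in the first step: one must eliminate $[\theta_8]_{W_9}$ before $[\theta_9]_{W_9}$ so that the second elimination does not reintroduce a term, and one must confirm from Table \ref{infini-w9} that no tangent field carries $[\theta_5]_{W_9}$ onto a nonzero multiple of $[\theta_6]_{W_9}$ or $[\theta_7]_{W_9}$, which is exactly what guarantees that the normal form retains precisely the two claimed moduli. Beyond this ordering the argument is a direct repetition of the earlier lemmas and introduces no new ideas.
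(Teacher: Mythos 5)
Your proposal is correct and follows essentially the same route as the paper, which states that Lemmas \ref{w9lem0}--\ref{w9lem8} are proved like the $W_8$ lemmas and the template Lemma \ref{w9lem1}: eliminate the higher-degree terms via Proposition \ref{elimin1} using the entries $\mathcal L_{X_1}[\theta_5]_{W_9}=14[\theta_8]_{W_9}$ and $\mathcal L_{X_3}[\theta_5]_{W_9}=-8[\theta_9]_{W_9}$ of Table \ref{infini-w9}, then normalize $c_5$ by the quasi-homogeneous rescaling in $Symm(W_9)$. Your observation that no $\pm$ sign arises because $\delta_5=11$ is odd (so $t=c_5^{-1/11}$ is real for any $c_5\ne 0$) correctly matches the paper's normal form $[W_9]^4$, and your ordering of the two eliminations is the right bookkeeping.
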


\begin{lem}
\label{w9lem5} If \;$c_1\!=\ldots=c_5=\!0$\;and $c_6\ne\!0$ then the algebraic restriction  $\mathcal{A}$ 
can be reduced by a symmetry of $W_9$ to an algebraic restriction $[\pm\theta_6+ \widetilde{c}_7\theta_7+\widetilde{c}_8 \theta_8]_{W_9}$.
\end{lem}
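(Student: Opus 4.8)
The plan is to reduce the algebraic restriction $\mathcal{A}=[c_6\theta_6+c_7\theta_7+c_8\theta_8+c_9\theta_9]_{W_9}$ (here $c_1=\dots=c_5=0$ and $c_6\ne 0$) in two steps: first eliminate the top term $\theta_9$ by the elimination procedure of Proposition \ref{elimin1}, and then rescale to normalize the leading coefficient to $\pm 1$ by a diagonal symmetry of $W_9$. Both steps follow the same scheme as the proofs of Lemmas \ref{w9lem0}--\ref{w9lem4} and rely only on the infinitesimal action recorded in Table \ref{infini-w9}.

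For the first step I would single out the tangent field $X_2=x_2E$, where $E=5x_1\,\partial/\partial x_1+4x_2\,\partial/\partial x_2+3x_3\,\partial/\partial x_3$ is the Euler field. Table \ref{infini-w9} gives $\mathcal L_{X_2}[\theta_6]_{W_9}=12[\theta_9]_{W_9}$ with $12\ne 0$. Since the basis elements $\theta_6,\theta_7,\theta_8,\theta_9$ have strictly increasing quasi-degrees $12<13<14<16$, the leading index of $\mathcal{A}$ is $s=6$ (coefficient $c_6\ne 0$), so Proposition \ref{elimin1} applies with $a_s=\theta_6$, $a_k=\theta_9$ and $r=12$. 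Because $\theta_9$ is the basis element of maximal quasi-degree, there are no surviving higher-degree terms, and the conclusion is that $\mathcal{A}$ is diffeomorphic to $[c_6\theta_6+c_7\theta_7+c_8\theta_8]_{W_9}$, with $c_6,c_7,c_8$ unchanged.

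For the second step, since $c_6\ne 0$ I would apply the quasi-homogeneous scaling
\[
\Psi:\,(x_1,x_2,x_3)\mapsto\bigl(|c_6|^{-5/12}x_1,\ |c_6|^{-1/3}x_2,\ |c_6|^{-1/4}x_3\bigr)\in Symm(W_9),
\]
which multiplies any quasi-homogeneous $2$-form of degree $\delta$ by $|c_6|^{-\delta/12}$. It sends $c_6\theta_6$ to $\mathrm{sign}(c_6)\,\theta_6=\pm\theta_6$ and multiplies $\theta_7,\theta_8$ by positive constants, yielding
\[
\Psi^*\bigl([c_6\theta_6+c_7\theta_7+c_8\theta_8]_{W_9}\bigr)=[\pm\theta_6+\widetilde c_7\theta_7+\widetilde c_8\theta_8]_{W_9},
\]
with $\widetilde c_7=c_7|c_6|^{-13/12}$ and $\widetilde c_8=c_8|c_6|^{-7/6}$, as claimed.

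The proof is essentially routine, as this lemma sits at the top of the reduction ladder: the only computational input is the single entry $\mathcal L_{X_2}[\theta_6]_{W_9}=12[\theta_9]_{W_9}$ of Table \ref{infini-w9}, and because $\theta_9$ is the highest-degree basis form there is nothing left to clean up after the elimination. The one point worth flagging is that the sign in $\pm\theta_6$ cannot be absorbed by the positive diagonal scaling above (since $|c_6|^{-\delta/12}>0$); the genuine distinction of the two signs is not needed for this lemma and is settled together with the disjointness of the classes in part (iii) of Theorem \ref{klasw9}.
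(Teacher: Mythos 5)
Your proof is correct and follows essentially the same route as the paper, which proves all of Lemmas \ref{w9lem0}--\ref{w9lem8} by exactly this template (eliminate the higher-degree term via Table \ref{infini-w9} and Proposition \ref{elimin1}, then normalize the leading coefficient to $\pm 1$ by the quasi-homogeneous scaling $(x_1,x_2,x_3)\mapsto(|c_6|^{-5/12}x_1,|c_6|^{-4/12}x_2,|c_6|^{-3/12}x_3)$), writing out only Lemma \ref{w9lem1} explicitly and declaring the rest analogous. Your key inputs check out --- $\mathcal L_{X_2}[\theta_6]_{W_9}=12[\theta_9]_{W_9}$, the strictly increasing quasi-degrees $12<13<14<16$, and the exponents $\widetilde c_7=c_7|c_6|^{-13/12}$, $\widetilde c_8=c_8|c_6|^{-7/6}$ --- and you are right that the non-equivalence of the two signs (which the paper handles by the linear-part argument in its sample proof) is not required by the statement of this lemma.
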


\begin{lem}
\label{w9lem6} If \;$c_1\!=\ldots=c_6=\!0$\;and $c_7\ne\!0$ then the algebraic restriction  $\mathcal{A}$ 
can be reduced by a symmetry of $W_9$ to an algebraic restriction $[\theta_7+\widetilde{c}_8 \theta_8]_{W_9}$.
\end{lem}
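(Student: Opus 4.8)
The plan is to peel off the single remaining top-degree term and then rescale. Since $c_1=\dots=c_6=0$ and $c_7\neq 0$, the algebraic restriction is $\mathcal{A}=[c_7\theta_7+c_8\theta_8+c_9\theta_9]_{W_9}$, so that only the three highest quasi-degrees ($\delta_7=13$, $\delta_8=14$, $\delta_9=16$) survive. First I would eliminate $\theta_9$ using Proposition \ref{elimin1}, and afterwards normalise the leading coefficient $c_7$ to $1$ by a quasi-homogeneous scaling symmetry of $W_9$.

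To remove $\theta_9$ I would read off from Table \ref{infini-w9} the one relevant entry, namely $\mathcal L_{X_1}[\theta_7]_{W_9}=16[\theta_9]_{W_9}$, where $X_1=x_3E$ is a quasi-homogeneous vector field tangent to $W_9$. Applying Proposition \ref{elimin1} with $a_s=\theta_7$, $a_k=\theta_9$ and $r=16\neq 0$, and noting that $\theta_9$ is the basis element of highest quasi-degree (so the tail $\sum_{j=k+1}^{p}b_ja_j$ is empty), I conclude that $\mathcal{A}$ is diffeomorphic to $[c_7\theta_7+c_8\theta_8]_{W_9}$; the coefficient of the intermediate-degree form $\theta_8$ is left untouched.

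It then remains to scale $c_7$ to $1$. Because $W_9$ is quasi-homogeneous with weights $w(x_1)=5$, $w(x_2)=4$, $w(x_3)=3$, the map $\Psi:(x_1,x_2,x_3)\mapsto(s^5x_1,s^4x_2,s^3x_3)$ belongs to $Symm(W_9)$ and acts on each basis form by $\Psi^*\theta_i=s^{\delta_i}\theta_i$. Taking $s=c_7^{-1/13}$, which is a real number for every $c_7\neq 0$ since $13$ is odd, yields $\Psi^*[c_7\theta_7+c_8\theta_8]_{W_9}=[\theta_7+\widetilde{c}_8\theta_8]_{W_9}$ with $\widetilde{c}_8=c_8\,c_7^{-14/13}$, which is the desired normal form.

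The computation is short precisely because only top-degree terms remain, so no Moser-type homotopy with an auxiliary linear system (as in the proof of Lemma \ref{w8lem1}) is required. The only points requiring care are verifying that $X_1=x_3E$ is genuinely tangent to $W_9$ and quasi-homogeneous, so that Proposition \ref{elimin1} applies, and checking that the exponent $-1/13$ produces an honest symmetry for either sign of $c_7$; both are immediate from the quasi-homogeneity of $W_9$ and the oddness of $\delta_7=13$. I therefore do not expect a genuine obstacle in this final lemma.
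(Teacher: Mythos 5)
Your proof is correct and follows essentially the same route as the paper, which omits the details of Lemma \ref{w9lem6} but indicates that it proceeds like the model proof of Lemma \ref{w9lem1}: a single application of Proposition \ref{elimin1} using the entry $\mathcal L_{X_1}[\theta_7]_{W_9}=16[\theta_9]_{W_9}$ from Table \ref{infini-w9} to remove $\theta_9$, followed by the quasi-homogeneous rescaling $\Psi\in Symm(W_9)$ to normalise $c_7$. Your observation that the odd quasi-degree $\delta_7=13$ lets you take $s=c_7^{-1/13}\in\mathbb R$ for either sign of $c_7$ (so no $\pm$ appears, matching the normal form $[\theta_7+c\theta_8]_{W_9}$, and no sign-distinction argument as in Lemma \ref{w9lem1} is needed) is exactly right.
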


\begin{lem}
\label{w9lem7} If \;$c_1\!=\ldots=c_7=\!0$\;and $c_8\ne\!0$ then the algebraic restriction  $\mathcal{A}$ 
can be reduced by a symmetry of $W_9$ to an algebraic restriction $[\pm\theta_8+\widetilde{c}_9 \theta_9]_{W_9}$.
\end{lem}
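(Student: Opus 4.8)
The plan is to recognize that, under the hypotheses $c_1=\cdots=c_7=0$ and $c_8\ne0$, the algebraic restriction has already reduced to
\[
\mathcal{A}=[c_8\theta_8+c_9\theta_9]_{W_9},\qquad c_8\ne0,
\]
so this is the most degenerate case and essentially no reduction work remains: there are no lower-degree generators left to eliminate, and only the coefficient of $\theta_8$ must be normalized.

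First I would read off the $[\theta_8]$ column of Table \ref{infini-w9} that $\mathcal L_{X_i}[\theta_8]_{W_9}=[0]_{W_9}$ for every $i\ge1$, while $\mathcal L_{E}[\theta_8]_{W_9}=14[\theta_8]_{W_9}$. Hence no tangent quasi-homogeneous vector field carries $\theta_8$ to $\theta_9$, so Proposition \ref{elimin1} produces no elimination when $\theta_8$ is the leading term; the $\theta_9$-term is irremovable and persists as a modulus. This is consistent with the single parameter $c$ in the normal form $[W_9]^7$ and with its codimension $7$ in Table \ref{tabw9}.

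Next I would normalize $c_8$ by a scaling symmetry. Since $W_9$ is quasi-homogeneous with weights $w(x_1)=5,\,w(x_2)=4,\,w(x_3)=3$, the map
\[
\Psi:\,(x_1,x_2,x_3)\mapsto\bigl(|c_8|^{-5/14}x_1,\,|c_8|^{-4/14}x_2,\,|c_8|^{-3/14}x_3\bigr),
\]
extended by the identity in the coordinates $x_{\ge4}$, belongs to $Symm(W_9)$. As $\theta_8,\theta_9$ are quasi-homogeneous of degrees $14$ and $16$, one has $\Psi^*\theta_8=|c_8|^{-1}\theta_8$ and $\Psi^*\theta_9=|c_8|^{-8/7}\theta_9$, so that
\[
\Psi^*\mathcal{A}=\Bigl[\tfrac{c_8}{|c_8|}\theta_8+c_9|c_8|^{-8/7}\theta_9\Bigr]_{W_9}=[\pm\theta_8+\widetilde{c}_9\theta_9]_{W_9},
\]
with $\widetilde{c}_9=c_9|c_8|^{-8/7}$, which is the desired form.

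I do not expect a genuine obstacle here, as this is the simplest of the reduction lemmas; the only structural point worth flagging is the surviving sign $\pm$. Because $\deg\theta_8=14$ is \emph{even}, the scaling factor $|c_8|^{-1}$ is necessarily positive and the sign of $c_8$ cannot be absorbed (in contrast with the last $W_8$ lemma, where $\deg\theta_8=19$ is odd and the coefficient normalizes to $+1$). Should one wish to confirm that the two signs give genuinely distinct classes, I would argue as in Lemma \ref{w8lem2a}, by examining the action of the linear part of an admissible diffeomorphism on $dx_1\wedge dx_3|_0$; but this belongs to the disjointness statement rather than to the present reduction.
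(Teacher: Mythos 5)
Your proof is correct and follows exactly the paper's template for these lemmas (the paper only sketches this case, referring to the model proof of Lemma \ref{w9lem1}): read off from Table \ref{infini-w9} that no tangent field moves $[\theta_8]_{W_9}$, so Proposition \ref{elimin1} yields no elimination and $c_9$ survives, then normalize $c_8$ by the quasi-homogeneous scaling $\Psi$ with exponents $-5/14,-4/14,-3/14$, leaving the sign $\pm$ because $\delta_8=14$ is even. Your computations ($\Psi^*\theta_8=|c_8|^{-1}\theta_8$, $\Psi^*\theta_9=|c_8|^{-8/7}\theta_9$) check out, and deferring the non-equivalence of the two signs to the disjointness statement, to be handled as in Lemma \ref{w8lem2a}, matches the paper's organization.
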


\begin{lem}
\label{w9lem8} If \;$c_1\!=\ldots=c_8=\!0$\;and $c_9\ne\!0$ then the algebraic restriction  $\mathcal{A}$ 
can be reduced by a symmetry of $W_9$ to an algebraic restriction $[\pm\theta_9]_{W_9}$.
\end{lem}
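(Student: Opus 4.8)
The plan is to treat Lemma \ref{w9lem8} as the base case of the elimination scheme, in which no elimination is actually required. Under the hypotheses $c_1=\cdots=c_8=0$ and $c_9\neq 0$, the description of $[Z^2(\mathbb R^{2n})]_{W_9}$ in Theorem \ref{w9-baza} gives immediately $\mathcal A=[c_9\theta_9]_{W_9}$, and since $\theta_9$ is the top basis element there is nothing lying above it to kill. Hence neither Proposition \ref{elimin1} nor the homotopy (Moser) method is needed: it remains only to normalize the single surviving coefficient by a quasi-homogeneous scaling symmetry.

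First I would record that $W_9$ is quasi-homogeneous with weights $w(x_1)=5,\ w(x_2)=4,\ w(x_3)=3$, the defining functions $x_1^2+x_2x_3^2$ and $x_2^2+x_1x_3$ being quasi-homogeneous of degrees $10$ and $8$. Consequently, for every $s>0$ the map
\[
\Psi_s\colon (x_1,x_2,x_3)\mapsto (s^5x_1,\ s^4x_2,\ s^3x_3),
\]
extended by the identity in the remaining coordinates, sends each defining function to $s^{\deg}$ times itself and therefore carries $W_9$ to itself, i.e. $\Psi_s\in Symm(W_9)$.

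Next, since $\theta_9=x_3^3\,dx_2\wedge dx_3$ is quasi-homogeneous of degree $16$, a direct pullback computation gives $\Psi_s^*\theta_9=s^{16}\theta_9$, whence $\Psi_s^*(c_9\theta_9)=c_9s^{16}\theta_9$. Taking $s=|c_9|^{-1/16}$ (legitimate because $c_9\neq 0$) yields $c_9s^{16}=c_9/|c_9|$, so that $\Psi_s^*\mathcal A=[\pm\theta_9]_{W_9}$ with the sign equal to that of $c_9$. This is exactly the asserted reduction.

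The one point worth flagging -- rather than a genuine obstacle -- is that the degree $16$ is \emph{even}, so $s^{16}>0$ for every real $s$ and the scaling can never reverse the sign of $c_9$. This is precisely why the normal form must retain the ambiguity $\pm\theta_9$: the sign is a genuine discrete invariant. That the two signs really give non-diffeomorphic algebraic restrictions is not needed for the present reduction; it belongs to the disjointness statement (iii) of Theorem \ref{klasw9} and would be established separately, for instance by comparing the admissible linear parts of symmetries of $W_9$, exactly as in the proof of Lemma \ref{w8lem2a}.
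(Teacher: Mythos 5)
Your proof is correct and follows exactly the scheme the paper indicates for Lemmas \ref{w9lem0}--\ref{w9lem8} (elimination via Proposition \ref{elimin1} where needed, then normalization by the quasi-homogeneous scaling $\Psi\in Symm(W_9)$ with exponents $5/16,\,4/16,\,3/16$ matching the pattern of (\ref{proofw9lem24})); here the elimination step is indeed vacuous since $\theta_9$ is the top basis element. You also correctly defer the non-diffeomorphy of $[\theta_9]_{W_9}$ and $[-\theta_9]_{W_9}$ to statement (iii) of Theorem \ref{klasw9}, which is where the paper handles it.
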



The proofs of Lemmas \ref{w9lem0} -- \ref{w9lem8} are similar to the proofs of the lemmas for the $W_8$ singularity. As an example we give the proof of Lemma \ref{w9lem1}.

\begin{proof}[Proof of Lemma \ref{w9lem1}]
 We see from Table \ref{infini-w9} that  for any algebraic restriction  $[\theta_k]_{W_9}$, where $k\in \{5,6,7,8,9\}$ there exists a vector field $V_k$ tangent to $W_9$ such that $\mathcal L_{V_k}[\theta_2]_{W_9}=[\theta_k]_{W_9}$. Using Proposition  \ref{elimin1} we obtain that $\mathcal{A}$ is diffeomorphic to  $[c_2\theta_2+{c}_3 \theta_3+c_4\theta_4]_{W_9}$.

By the condition $c_2\ne 0$ we have a diffeomorphism $\Psi \in Symm(W_9)$ of the form
  \begin{equation}
\label{proofw9lem24}
\Psi:\,(x_1,x_2,x_3)\mapsto (|c_2|^{-\frac{5}{8}} x_1,|c_2|^{-\frac{4}{8}} x_2,|c_2|^{-\frac{3}{8}} x_3)
\end{equation}
and we obtain
\[\Psi^*([c_2\theta_2+c_3 \theta_3+c_4 \theta_4]_{W_9})\!=\![\frac{c_2}{|c_2|}\theta_2+c_3 |c_2|^{-\frac{9}{8}} \theta_3+{c}_4 |c_2|^{-\frac{10}{8}} \theta_4]_{W_9}\!=\![\pm\theta_2+ \widetilde{c}_3 \theta_3+\widetilde{c}_4 \theta_4]_{W_9}.\]

The algebraic restrictions $[ \theta_2+ \widetilde{c}_3 \theta_3+\widetilde{c}_4 \theta_4]_{W_9}$ and $[-\theta_2+ \widetilde{b}_3 \theta_3+\widetilde{b}_4 \theta_4]_{W_9}$ are not diffeomorphic.
Any diffeomorphism $\Phi=(\Phi_1,\ldots,\Phi_{2n})$ of $(\mathbb{R}^{2n},0)$ preserving $W_9$ has to preserve a curve $C_2(t)=(t^5,-t^4,-t^3,0,\ldots,0)$. 
Hence $\Phi$ has a linear part
 \setlength{\arraycolsep}{1mm}
 \renewcommand*{\arraystretch}{0.9}
\begin{equation}
\label{lindyfw9}
\begin{array}{cccccccccccccccc}
\Phi_1: & A^5x_1& &+ & & &&A_{14}x_4&+& \cdots &+&A_{1,2n}x_{2n}\\
\Phi_2: & A_{2,1}x_1 &+& A^4x_2& &+& &A_{24}x_4&+& \cdots &+&A_{2,2n}x_{2n} \\
\Phi_3: & A_{3,1}x_1&+ &A_{3,2}x_2 &+ & A^3x_3&+&A_{34}x_4&+& \cdots &+&A_{3,2n}x_{2n}\\
\Phi_4: & & & & & & &A_{44}x_4&+& \cdots &+&A_{4,2n}x_{2n}\\
\vdots & & & & & & & \vdots & \vdots & \vdots & \vdots\\
\Phi_{2n}: & & & & & & &A_{2n,4}x_4&+& \cdots &+&A_{2n,2n}x_{2n}\\
\end{array}
\end{equation}
where $A, A_{i,j}\in \mathbb R$.\par\noindent
If we assume that $\Phi^*([ \theta_2+ \widetilde{c}_3 \theta_3+\widetilde{c}_4 \theta_4]_{W_9})= [-\theta_2+ \widetilde{b}_3 \theta_3+\widetilde{b}_4 \theta_4]_{W_9}$, then \par\noindent
 $A^{8}dx_1\wedge dx_3|_0=-dx_1\wedge dx_3|_0$, which is a contradiction.
\end{proof}

\smallskip

\medskip

Statement $(ii)$ of Theorem \ref{klasw9} follows from the conditions
in the proof of part $(i)$ (codimension) and  from Theorem \ref{thm B} and Proposition \ref{sm} (symplectic multiplicity) and Proposition \ref{ii} (index of isotropy).

\medskip

 Statement $(iii)$ of Theorem \ref{klasw9} follows from Theorem \ref{geom-cond-w9}. The singularity classes corresponding to normal forms are disjoint because they can be distinguished by geometric conditions.

\smallskip

To prove statement $(iv)$ of Theorem \ref{klasw9} we have to show that the parameters $c, c_1, c_2$ are moduli in the normal forms. The proofs are very similar in all cases. We consider as an example the normal form with two parameters $[\theta_1+c_1\theta_2+c_2\theta_3]_{W_9}$. From Table \ref{infini-w9} we see that the tangent space to the orbit
of $[\theta_1+c_1\theta_2+c_2\theta_3]_{W_9}$ at $[\theta_1+c_1\theta_2+c_2\theta_3]_{W_9}$ is spanned by the linearly independent algebraic restrictions
$[7\theta_1+8c_1\theta_2+9c_2\theta_3]_{W_9}$, $[\theta_4]_{W_9},[\theta_5]_{W_9}, [\theta_6]_{W_9}, [\theta_7]_{W_9}, [\theta_8]_{W_9}, [\theta_9]_{W_9}.$ Hence the algebraic restrictions
$[\theta_2]_{W_9}$ and $[\theta_3]_{W_9}$ do not belong to it. Therefore the parameters $c_1$ and $c_2$ are independent moduli in the normal form
$[\theta_1+c_1\theta_2+c_2\theta_3]_{W_9}$.
\bigskip


\bibliographystyle{amsalpha}

\end{document}